\renewcommand*{\backrefalt}[4]{%
	\hypersetup{linkcolor=gray}%
	\color{gray}{%
		[%
		\ifcase #1 %
		No citations%
		\or
		Cited on p. #2%
		\else
		Cited on pp. #2%
		\fi
		]%
	}
}
\newtheorem{remark}{Remark} 
\newtheorem{example}{Example} 
\newtheorem{lemma}{Lemma}
\newtheorem{proposition}{Proposition}
\newtheorem{corollary}{Corollary}
\crefname{example}{Example}{Examples} 
\crefname{remark}{Remark}{Remarks} 
\crefname{subsection}{Section}{Sections} 
\crefname{section}{Section}{Sections} 
\newtheorem{example}{Example} 
\crefname{example}{Example}{Examples} 
\crefname{remark}{Remark}{Remarks} 
\crefname{subsection}{Section}{Sections} 
\crefname{section}{Section}{Sections} 
\newcommand{\R}{\mathbb R}
\newcommand{\N}{\mathbb N}
\newcommand{\cE}{\mathcal{E}}
\newcommand{\cF}{\mathcal{F}}
\DeclareMathOperator{\Arg}{Arg}
\DeclareMathOperator*{\Res}{Res}
\newcommand{\suml}{\sum\limits}
\newcommand{\intl}{\int\limits}
\newcommand{\oC}{{\mathbb{C}}}
\newcommand{\e}{\mathrm{e}}
\newcommand{\wt}[1]{\widetilde{#1}}
\newcommand{\colorboxo}[2]{{\pgfsetfillopacity{0.7}\colorbox{#1}{\pgfsetfillopacity{1}#2}}}
\begin{document}
\title{Exponentially Convergent Numerical Method for Abstract Cauchy Problem with Fractional Derivative of Caputo Type}
\iflatexml
	\author{\href{https://orcid.org/0000-0003-3065-4921}{Dmytro Sytnyk}%
		\thanks{%
			Department of Numerical Mathematics, Institute of Mathematics, National Academy of Sciences of Ukraine, Kyiv, 01024, Ukraine; (\href{mailto:sytnik@imath.kiev.ua}{sytnik@imath.kiev.ua}). }%
		\\
		\href{https://orcid.org/0000-0001-6908-6015}{Barbara Wohlmuth}%
		\thanks{Department of Mathematics, Technical University of Munich, Garching, 85748, Germany; (\href{mailto:wohlmuth@ma.tum.de}{wohlmuth@ma.tum.de}).}
	}
\else
	\author{Dmytro Sytnyk\,\orcidlink{0000-0003-3065-4921}%
		\thanks{Department of Mathematics, Technical University of Munich, Garching, 85748, Germany;  (\email{syd@ma.tum.de});
			Department of Numerical Mathematics, Institute of Mathematics, National Academy of Sciences of Ukraine, Kyiv, 01024, Ukraine; (\email{sytnik@imath.kiev.ua}). }%
		\and
		Barbara Wohlmuth\,\orcidlink{0000-0001-6908-6015}%
		\thanks{Department of Mathematics, Technical University of Munich, Garching, 85748, Germany; (\email{wohlmuth@ma.tum.de}).}
	}
\fi
\date{\today}
\maketitle

\begin{abstract}
	We present an exponentially convergent numerical method to approximate the solution of the Cauchy problem for the inhomogeneous fractional differential equation with an unbounded operator coefficient and Caputo fractional derivative in time.
	The numerical method is based on the newly obtained solution formula that consolidates the mild solution representations of sub-parabolic, parabolic and sub-hyperbolic equations with sectorial operator  coefficient $A$ and non-zero initial data.
	The involved integral operators are approximated using the sinc-quadrature formulas that are tailored to the spectral parameters of $A$, fractional order $\alpha$ and the smoothness of the first initial condition, as well as to the properties of the equation's right-hand side $f(t)$.
	The resulting method possesses exponential convergence for positive sectorial $A$, any finite $t$, including $t = 0$ and the whole range $\alpha \in (0,2)$.
	It is suitable for a practically important case, when no knowledge of $f(t)$ is available outside the considered interval $t \in [0, T]$.
	The algorithm of the method is capable of multi-level parallelism.
	We provide numerical examples that confirm the theoretical error estimates.
\end{abstract}

\begin{keyword}
	inhomogeneous Cauchy problem; Caputo fractional derivative; sub-parabolic problem; sub-hyperbolic problem; mild solution; numerical method; contour integration; exponential convergence; parallel algorithm
\end{keyword}

\iflatexml
	\noindent\textbf{AMS subject classifications:}
	{34A08, 35R11, 34G10, 35R20, 65L05, 65J08, 65J10}
\else
	\begin{AMS}
		{34A08, 35R11, 34G10, 35R20, 65L05, 65J08, 65J10}
	\end{AMS}
\fi

\section{Problem Formulation and Introduction}

In this paper, we consider a Cauchy problem for the following fractional order differential equation:
\begin{equation}\label{eq:FCP_DE}
	\partial_t^\alpha u + A u = f,\quad  t  \in [0, T] .
\end{equation}
Here,
$\partial_t^\alpha $ denotes the Caputo fractional derivative of order
$\alpha$ with respect to $t$
\[
	\partial_t^\alpha u(t)= \frac{1}{\Gamma(n-\alpha)} \intl_0^t(t-s)^{n-\alpha-1}u^{(n)}(s)\, ds,
\]
where $u^{(n)}(s)$ is the usual integer order derivative,
$n = \lceil \alpha \rceil $ is the smallest integer greater or equal to $\alpha$ and
$\Gamma(\cdot)$ is Euler's Gamma function.
The operator $\partial_t^\alpha $ provides a generalization of the classical differential operator $ \tfrac{\partial}{\partial t} = \partial_t^1$.
For non-integer $\alpha$,  the action of Caputo fractional derivative is essentially nonlocal in time.
In addition to that, the memory kernel from $\partial_t^\alpha $, $\alpha < 1$ has a mild singularity at $0$.
These two facts have a profound impact on the analytical and numerical properties of solutions to fractional differential equation \eqref{eq:FCP_DE}.
If $\alpha  < 1$, this equation is called sub-parabolic.
Similarly, when $\alpha > 1$, the equation is called sub-hyperbolic.
We direct the reader to \cite{Kilbas2006} for a more concise introduction into the subject of fractional derivatives and the theory of associated ordinary differential equations.

The coefficient $A$ in \eqref{eq:FCP_DE}  is assumed to be a closed linear operator  with the domain $D(A)$ dense in a Banach space $X = X(\|\cdot\|,\Omega)$
and the spectrum $\mathrm{Sp}(A)$ contained in the following sectorial region $\Sigma(\rho_s, \varphi_s)$, that is commonly called a spectral angle:
\begin{equation}\label{eq:SpSector}
	\Sigma(\rho_s, \varphi_s) = \left\{ z=\rho_s+\rho \e^{i\theta}:\quad \rho \in [0,\infty), \ \left|\theta\right|< \varphi_s
	\right\}.
\end{equation}
The numbers $\rho_s > 0$ and $\varphi_s < \pi/2$ are called spectral parameters (characteristics) of $A$.
In addition to the assumptions on the location of spectrum, we suppose that the resolvent of $A$:
$
	R\left(z, A\right) \equiv (zI-A)^{-1}
$
satisfies the bound
\begin{equation}\label{eq:ResSector}
	\left\|(zI-A)^{-1}\right\|\leq \frac{M}{1+\left|z\right|}
\end{equation}
outside the sector $\Sigma$ and on its boundary $\Gamma_\Sigma$.
Following the established convention \cite{bGavrilyuk2011}, we will call such sectorial operators strongly positive.
We accompany equation \eqref{eq:FCP_DE} with the usual initial~condition
\begin{subequations}\label{eq:FCP_BC}
	\begin{equation}\label{eq:FCP_BC1}
		u(0) = u_0,
	\end{equation}
	for the solution and additional condition for its derivative, when $1< \alpha < 2$:
	\begin{equation}\label{eq:FCP_BC2}
		u'(0) = u_1.
	\end{equation}
\end{subequations}

The theory of fractional Cauchy problems for differential operators was developed in the works \cite{Oldham1974,Schneider1989,Dzherbashian2020}.
The abstract setting,  considered here, has been theoretically studied \mbox{in \cite{Kochubei1989,Bazhlekova1998,Keyantuo2013}} for $\alpha \in (0, 1)$ then in \cite{Bazhlekova2001} for $\alpha \in [1, 2)$ and, most recently, in \cite{SytnykWohlmuth2023}.
In the current work, we focus on the numerical evaluation of the mild solution to problem \eqref{eq:FCP_DE},
\eqref{eq:FCP_BC} that is given by the following result.
\begin{theorem}[\cite{SytnykWohlmuth2023}]\label{thm:FCP_sol_rep}
	Let $\alpha \in (0,2)$ and $A$ be a sectorial operator with the domain $D(A)$ and
	the spectral parameters $\rho_s > 0$, $\varphi_s < \pi\min{\left\{\frac{1}{2}, \left(1 - \tfrac{\alpha}{2}\right)\right\}}$.
	If  $f \in W^{1,1}([0, T], X)$ and $u_0, u_1 \in D(A)$, then there exists a mild solution $u(t)$ of problem \eqref{eq:FCP_DE}, \eqref{eq:FCP_BC}  that can be represented as follows:
	\begin{equation}\label{eq:FCP_InhomSol_rep}
		\begin{aligned}
			u(t) = S_\alpha(t) u_0 + S_{\alpha,2}(t) u_1 + J_\alpha S_{\alpha}(t)f(0) \!+\!\! \intl_0^t S_{\alpha}(t - s)   J_\alpha f'(s) d s.
		\end{aligned}
	\end{equation}
	Here, $J_\alpha$ stands for the Riemann--Liouville integral
	\begin{equation}\label{eq:FCP_RLInt}
		J_\alpha v(t) = \frac{1}{\Gamma(\alpha)} \intl_0^t(t-s)^{\alpha-1}v(s)\, ds ,
	\end{equation}
	the initial vector $u_1 \equiv 0$ for $\alpha \in (0, 1]$  and $S_{\alpha,\beta}(t)$ is defined by
	\begin{equation}\label{eq:FCP_SO_cont_repr}
		S_{\alpha,\beta}(t) x  = \frac{1}{2\pi i} \intl_\Gamma  e^{zt}z^{\alpha-\beta} (z^\alpha I + A)^{-1} x dz, \quad \beta \geq 1.
	\end{equation}
	with $S_\alpha(t) \equiv S_{\alpha,1}(t)$ for short.
	The contour $\Gamma$ is chosen in such a way that the integral in \eqref{eq:FCP_SO_cont_repr}  is convergent and the curve $z^\alpha$, $z \in \Gamma$ is positively oriented with respect to $-\Sigma(\rho_s, \varphi_s) \cup \{0\}$.
\end{theorem}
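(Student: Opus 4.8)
My plan is to derive the representation \eqref{eq:FCP_InhomSol_rep} from the Laplace transform of problem \eqref{eq:FCP_DE}, \eqref{eq:FCP_BC} followed by a contour-deformation inversion. First I would Laplace transform in $t$ (extending $f$ beyond $[0,T]$ in an arbitrary $W^{1,1}$ way, which by causality does not affect $u$ on $[0,T]$): using $\mathcal{L}[\partial_t^\alpha u](z) = z^\alpha\widehat{u}(z) - \suml_{k=0}^{n-1} z^{\alpha-1-k}u^{(k)}(0)$ with $n=\lceil\alpha\rceil$, equation \eqref{eq:FCP_DE} turns into the algebraic relation $(z^\alpha I + A)\widehat{u}(z) = z^{\alpha-1}u_0 + z^{\alpha-2}u_1 + \widehat{f}(z)$ in $X$, with the $u_1$-term present only for $\alpha\in(1,2)$. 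The next step is to show that $z^\alpha I + A$ is boundedly invertible along a suitable curve: since $\mathrm{Sp}(A)\subset\Sigma(\rho_s,\varphi_s)$, one needs $-z^\alpha$ to stay out of $\Sigma(\rho_s,\varphi_s)$, and then the resolvent bound \eqref{eq:ResSector} gives $\|(z^\alpha I + A)^{-1}\|\le M/(1+|z|^\alpha)$. This is exactly where the hypothesis $\varphi_s < \tfrac{\pi}{2}\min\{1,\alpha^{-1}\}$ is used: it lets one choose a contour $\Gamma$ running into the left half-plane (so that $\e^{zt}$ decays) whose image $\{z^\alpha: z\in\Gamma\}$ is positively oriented with respect to $-\Sigma(\rho_s,\varphi_s)\cup\{0\}$, making all the inversion integrals below meaningful.

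Solving for $\widehat{u}$, inverting via the Bromwich integral, and deforming the vertical line onto $\Gamma$ yields
\[
	u(t) = \frac{1}{2\pi i}\intl_\Gamma \e^{zt}\,(z^\alpha I + A)^{-1}\bigl(z^{\alpha-1}u_0 + z^{\alpha-2}u_1 + \widehat{f}(z)\bigr)\, dz.
\]
The first two summands are $S_\alpha(t)u_0$ and $S_{\alpha,2}(t)u_1$ by definition \eqref{eq:FCP_SO_cont_repr}. For the inhomogeneous summand I would integrate by parts in $\widehat{f}$, using $f\in W^{1,1}([0,T],X)$, to get $\widehat{f}(z) = z^{-1}f(0) + z^{-1}\widehat{f'}(z)$, whence $(z^\alpha I + A)^{-1}\widehat{f}(z) = z^{-1}(z^\alpha I + A)^{-1}f(0) + z^{\alpha-1}(z^\alpha I + A)^{-1}\,z^{-\alpha}\widehat{f'}(z)$. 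Since $z^{-\alpha}$ is the Laplace symbol of $J_\alpha$ and $z^{\alpha-1}(z^\alpha I + A)^{-1}$ that of $S_\alpha(t)$, the convolution theorem identifies these two pieces with $J_\alpha S_\alpha(t)f(0)$ and $\intl_0^t S_\alpha(t-s)J_\alpha f'(s)\, ds$, which is precisely \eqref{eq:FCP_InhomSol_rep}.

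The bulk of the work lies in two places. The first is justifying the contour deformation, absolute convergence of the integrals, Fubini for the convolution, and differentiation under the integral sign. At $z=\infty$ the integrand of $S_{\alpha,\beta}(t)$ decays like $|z|^{\alpha-\beta}\|(z^\alpha I + A)^{-1}\|\sim|z|^{-\beta}$, absolutely integrable for $\beta\ge 2$ but only conditionally for $\beta=1$; for $S_\alpha(t)=S_{\alpha,1}(t)$ the factor $\e^{zt}$ restores absolute convergence when $t>0$, while the borderline $t=0$ case (and hence the meaning of $S_\alpha(0)u_0$) is handled by exploiting $u_0,u_1\in D(A)$: writing $(z^\alpha I + A)^{-1}u_0 = z^{-\alpha}u_0 - z^{-\alpha}(z^\alpha I + A)^{-1}Au_0$ separates an explicitly integrable remainder from a term whose contour integral is evaluated by Cauchy's theorem. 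The second is to confirm that the function so constructed is a \emph{mild} solution in the sense of \cite{SytnykWohlmuth2023}, i.e.\ that $u\in C([0,T],X)$, that $J_\alpha\bigl(u(t)-u_0-tu_1\bigr)\in D(A)$, and that $u$ satisfies the integrated form of \eqref{eq:FCP_DE} together with \eqref{eq:FCP_BC1}--\eqref{eq:FCP_BC2}; this is obtained by applying $J_\alpha$ and $A$ to the representation, pushing $A$ through the resolvent via $A(z^\alpha I + A)^{-1} = I - z^\alpha(z^\alpha I + A)^{-1}$, re-summing the Laplace symbols, and reading off the initial data from the $t\to 0^+$ limits of $S_{\alpha,\beta}(t)$. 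I expect the convergence and deformation bookkeeping near $z=\infty$ and $t=0$, together with the exact matching with the mild-solution definition, to be the main obstacle; the remaining manipulations are routine Laplace-transform calculus.
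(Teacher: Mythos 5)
This theorem is not proved in the present paper at all: it is quoted verbatim from \cite{SytnykWohlmuth2023}, so there is no in-paper proof to compare against. That said, your Laplace-transform/contour-deformation derivation is the standard route to \eqref{eq:FCP_InhomSol_rep} and is the one followed in the cited source; your outline is sound, and the two delicate points you flag are exactly the ones this paper later has to deal with explicitly. In particular, your device for the borderline $\beta=1$, $t=0$ case --- peeling off $z^{-\alpha}u_0$ from $(z^\alpha I+A)^{-1}u_0$ and evaluating $\frac{1}{2\pi i}\int_\Gamma e^{zt}z^{-1}\,dz=1$ by residues --- is precisely the ``resolvent correction'' of \cref{prop:FCP_res_cor} and \eqref{eq:FCP_SO_cor_repr}, and the role of the hypothesis $\varphi_s<\tfrac{\pi}{2}\min\{1,\alpha^{-1}\}$ in admitting a left-opening contour whose image $z^\alpha$ stays clear of $-\Sigma(\rho_s,\varphi_s)$ is worked out quantitatively in \cref{sec:FCP_hyp_contour}, constraint \eqref{eq:FCP_hyp_cont_par_adm_constr}. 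Two small cautions if you were to write this out in full: (i) the Bromwich inversion presupposes that a transformable solution exists, so the argument must ultimately be run in the direction you indicate at the end --- define $u$ by the formula and verify directly that it is a mild solution in the sense of \cite{SytnykWohlmuth2023} --- rather than as a derivation from an assumed $u$; and (ii) the off-interval extension of $f$ needs to be chosen with enough decay for $\widehat{f}$ to exist, although this is harmless since the final formula depends only on $f(0)$ and $f'\vert_{[0,t]}$.
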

The bulk of the existing research is devoted to the particular cases of \eqref{eq:FCP_DE} when $A$ is specified as a strongly elliptic linear partial differential or, more generally, pseudo-differential operator with the domain $D(A)$ that is dense in $X$ \cite{Schneider1989, Eidelman2004,Umarov2019}.
These cases also include the fractional powers of elliptic operators are encompassed by the class of strongly positive operators \cite{fujita} considered in \cref{thm:FCP_sol_rep} and below.
In this regard, the shape of $\mathrm{Sp}(A)$ justifies the choice of the range $(0,2)$ for $\alpha$, as a maximally possible for the considered class of $A$ (see \cite{SytnykWohlmuth2023} for a more detailed discussion).

There exists a considerable body of work devoted to numerical methods for evolution fractional differential equations (see \cite{Garrappa2018,Diethelm2019,Diethelm2020,Diethelm2022} and the references therein).
Philosophically, it can be subdivided into methods that directly approximate the components of \eqref{eq:FCP_DE}, or its integral analogue, and those that make use of more elaborate solution approximations.
The methods from the first class are sequential in nature and have algebraic convergence order that typically does not exceed $2$, even for the multi-step methods \cite{Garrappa2015a}, due to the intrinsic fractional-kernel singularity \cite{Stynes_2019}.
In addition, at each time-step, these methods need to query the entire solution history in order to evaluate $\partial_t^\alpha$ or $J_\alpha $, numerically.
In the consequence of that, they are computationally costly and memory constrained.
Nonetheless, the methods from this class are popular due to their simplicity \cite{Stynes2021}, numerical stability \cite{Garrappa2015a} and the ability to handle non-smooth initial data \cite{jin2019numerical}.
The second class of numerical methods is represented by the works \cite{Cuesta2006,Baffet2017,Fischer2019,Guo2019,Khristenko2021}, to name a few.
These methods are based on the clever solution approximations
that result in a time-stepping scheme requiring only a small number of previous solution states for the next state evaluation.
With some exceptions \mbox{(e.g., \cite{Guo2019}),} these methods are also $\mathcal{O}(h^p)$.

Spectral methods from \cite{McLean2010a,McLean2010,Colbrook2022a,Vasylyk2022} deserve a separate mention.
Although formally belonging to the second class, they make use of the exponentially convergent contour-based propagator approximation, which permits to evaluate the transient component of the solution to the linear problem without time-stepping.
The authors of these works, however, do not apply it to~\eqref{eq:FCP_DE}, \eqref{eq:FCP_BC} directly.
Instead, they consider a special proxy problem $\partial_t u + I^{1-\alpha}A u = g$ where $I^\alpha$ is a nonlocal operator equal to $\partial_t^{\alpha}$, if $\alpha<1$, or to $J^\alpha$, otherwise.
It was shown in~\cite{McLean2010}, that the existing methodology for parabolic problems \cite{gm5,Weideman2006,WeidTref1} can be transferred to the mild solution of such proxy problem with all important numerical features of the solution algorithms preserved, including uniform exponential convergence for $t \in [0,T]$ and the capacity for multi-level parallelism.
Despite being simple and efficient, the proxy-problem idea has certain ramifications when applied to \eqref{eq:FCP_DE}, \eqref{eq:FCP_BC}.
Firstly, there is no easy way to incorporate the initial condition from \eqref{eq:FCP_BC2} into the proxy problem formulation, so all existing works consider $u'(0) = 0$.
Secondly, the methods from \cite{McLean2010a,McLean2010,Pang2016,Colbrook2022a} operating on the Laplace transform image of the right-hand side $g$ are prone to errors when the original $f$ from \eqref{eq:FCP_DE} is not given in the closed form.
Hence, they are unsuitable for many applications.
Meanwhile, formula \eqref{eq:FCP_InhomSol_rep}, which serves as a base for our numerical method, does not require any extra knowledge about the right-hand side $f \in W^{1,1}([0, T], X)$ beside the values $f(0)$ and $f'(t)$, $t \in [0, T]$.
In addition to that, the rigorous analysis from \cite{McLean2010,Vasylyk2022} addresses a version of the proxy problem where $\partial_\alpha$ is a Riemann--Liouville (RL) fractional derivative.
Cauchy problems with RL derivative are simpler in the sense of propagator representation \cite{SytnykWohlmuth2023}, but they are compatible with \eqref{eq:FCP_DE}, \eqref{eq:FCP_BC} only under some additional assumptions.

It is fair to point out that the majority of the mentioned methods are designed to handle the nonlinear fractional differential equation, more general than \eqref{eq:FCP_DE}.
With the view of similar nonlinear extensions in mind, in this work we would like to prioritize those properties of the solution method for \eqref{eq:FCP_DE}, \eqref{eq:FCP_BC}, which will make such extensions possible.
Let us for the moment assume that $f = f(t,u)$.
Then, representation \eqref{eq:FCP_InhomSol_rep} can be used as a base for the sequential time-stepping scheme \cite{Hochbruck2005,LopezFernandez2005} or as the fine propagator in a more parallelization-friendly ParaExp-type scheme \cite{Gander2013}.
In both cases, the method will be free of the issues with approximating $\partial_t^\alpha u$ in the vicinity of $t=0$, provided that the proposed \textit{approximation of~\eqref{eq:FCP_InhomSol_rep} converges uniformly}.
Such application scheme also justifies the use of \textit{a moderate in size final time $T$}.
If, more generally, we assume that $Au=A(t,u)$, then the problem in question can be reduced to \eqref{eq:FCP_DE}, \eqref{eq:FCP_BC} using collocation \cite{Bohonova_2008,bGavrilyuk2011} or a similar in nature time-stepping scheme, inspired by \cite{Gonzalez2016}.
In such scenario, $A(t,u)$ is approximated  by $A(t_k, u_k)$ having spectral characteristics that may vary drastically with $k$ (see Cahn--Hilliard equation from \cite{Fritz2022}, for instance), and the right-hand side in the form $A(t_k,u_k)-A(t,u)$, which makes sense only locally.
Thus, the solution method should be able to reliably handle \textit{operators with arbitrary spectral parameters} and \textit{right-hand sides that are unknown a priori}.

Taking the aforementioned properties into account, below we devise an exponentially convergent approximation for \eqref{eq:FCP_InhomSol_rep} by building upon a well-established technique \cite{McLTh, thomee1,Weideman2010,gm5} that involves the application of a trapezoidal quadrature rule to the parametrized contour integral from \eqref{eq:FCP_SO_cont_repr}.

In \cref{sec:FCP_hyp_contour} we study a question regarding the choice of the suitable integration contour for such parametrization.
The proposed time-independent hyperbolic contour $\Gamma = \Gamma_I$ is valid for the wide class of sectorial operators $A$ with fixed $\varphi_s < \pi\min{\left\{\frac{1}{2}, \left(1 - \tfrac{\alpha}{2}\right)\right\}}$ and arbitrary $\rho_s > 0$.
The parameters of $\Gamma_I$ are derived using the set of constraints that utilizes all available analyticity of the propagator, and therefore, maximize theoretical convergence speed of the sinc-quadrature applied to $S_\alpha(t)$.
\Cref{sec:FCP_num_method} is devoted to the development and justification of the numerical method.
Using the moderate smoothness assumption $u_0 \in D(A^\gamma)$, $\gamma > 0$,  in \cref{sec:FCP_Prop_Approx} we propose an exponentially convergent approximation of $S_\alpha(t)u_0$, that does not degrade for small $t$ like the similar methods from \cite{Fischer2019,Rieder2023}.
Additionally, the approximation is numerically stable for sectorial operators with the spectrum arbitrary close to the origin.
This new result is made possible by extending the idea of resolvent correction, originally introduced for $S_1(t)$ in \cite{gm5},
to the class of abstract integrands with a scalar-part singularity; see \cref{lem:FCP_Pro_parametrized}, below.
In \cref{sec:FCP_hom_sol_appr,sec:FCP_inhom_sol_appr}, we apply the developed approximation of $S_\alpha(t)$ to turn solution representation \eqref{eq:FCP_InhomSol_rep} into the exponentially convergent numerical method.
A priori error estimates given by \cref{thm:FCP_prop_appr,thm:FCP_inhom_sol_err_est} characterize the method's convergence in terms of the smoothness of $u_0$, $f'(t)$, values $\alpha$, $\varphi_s$ and the size of $T$.

The implementation details are provided by \cref{alg:FCP_hom_sol_appr,alg:FCP_inhom_sol_appr} which are capable of multilevel parallelism: at the level of solution evaluation for each of the desired $t$'s; at the level of evaluating resolvents for the set of different quadrature points $z_m$ and at the level of solving stationary problem that pertains to the resolvent evaluation for the fixed $z_m$.

The mentioned numerical properties are experimentally verified in \cref{ex:FCP_ex1_hom_R_eigenfunction}  and \cref{ex:FCP_ex2_inhom_R_eigenfunction}, for the homogeneous and inhomogeneous part of the solution, respectively.
Both examples consider the negative Laplacian with tunable spectral characteristics in place of $A$ and a conventional eigenfunction-based initial data.
Such restriction on the form of initial data permits us to evaluate the space component of solution explicitly, thus removing its contribution to the overall error.
The restriction is relaxed in \cref{ex:FCP_ex3_inhom_R_FD}, which is devoted to the experimental analysis of a fully discretized numerical scheme based on the combination of the developed method with a finite-difference stationary solver.
In all three examples, a stable numerical behavior of the approximated solution is observed for $\alpha \in [0.1, 1.9]$  \mbox{and $T \leq 5$.}

\section{Contour of Integration}\label{sec:FCP_hyp_contour}
It is well known that the choice of integration contour $\Gamma$ in \eqref{eq:FCP_SO_cont_repr} is critical to the performance of the numerical evaluation of operator function based on the contour integral representation
\cite{WeidTref1,Weideman2010,gm5,lopez-fernandez1}.
Judicious contour selection involves the analysis of the interplay between the shape of the integration contour, analytical properties of the parametrized integrand and their impact on the performance of a quadrature rule, that is used to evaluate the resulting integral numerically.
The authors of \cite{bGavrilyuk2011} showed that the hyperbolic contour is the most convenient choice for the quadrature-based numerical evaluation of abstract functions with sectorial operator argument.
Below, we extend their analysis to the case of fractional propagator $S_\alpha(t)$.

Let us consider the following hyperbolic contour:
\begin{equation}\label{eq:int_cont_hyp_crit_sec}
	\Gamma_I: z(\xi) = a_0 - a_I \cosh(\xi) + i b_I \sinh(\xi), \quad \xi \in	(-\infty, \infty ),
\end{equation}
with the parameters $a_0$, $a_I$, $b_I$ that are called shift, first and second semi-axes, respectively.
Admissible range of values for these parameters is determined from  \cref{thm:FCP_sol_rep} that enforces the integration contour $\Gamma = \Gamma_I$  to encircle the singularities of the integrand in \eqref{eq:FCP_SO_cont_repr} for $\beta =1,2$.
The integral is convergent for $t \geq 0$ if $\Re{z(\xi)}\rightarrow -\infty$ ($\xi \rightarrow \infty$), because in such case the norm of integrand on $\Gamma$ will decay faster than the exponential.
This observation transforms into the condition $a_I>0$ for the first semi-axis of hyperbola from \eqref{eq:int_cont_hyp_crit_sec}.
The condition $b_I>0$ for the second semi-axis is induced by the orientation of $z^\alpha (\xi)$.
We also have to make sure that this curve does not intersect the spectrum of $-A$.
It is worth noting that, for any $\varphi \in [0,\pi]$, the function $z^\alpha$ maps the sector $\Sigma(0, \varphi/\alpha)$ into the sector $\Sigma(0, \varphi)$.
Such mappings can be associated with the Dunford--Cauchy representation of the fractional powers of $A$ \cite{Ashyralyev2009,bGavrilyuk2011}.
They are often studied in the theory of fractional resolvent families \cite{Li2010} and associated Cauchy problems \cite{Martinez2001}.

For non-negative $a_I,b_I$, the hyperbolic contour $\Gamma_I$ is contained within the region
$\Sigma(a_0 - a_I, \varphi_I) \setminus \Sigma(a_0,  \varphi_I) $.
Here, $\varphi_I$ is the angle between positive real semi-axes and asymptotes of $\Gamma_I$: $a_0 + \rho e^{\pm i\varphi_I}$ depicted in \cref{fig:FCP_hyp_cont}~(\textbf{b}), i.e., $\tan{\varphi_I} = -{\tfrac{b_I}{a_I}}$.
\begin{figure}[h!tb]
	\begin{centering}
		\iflatexml
			\includegraphics[width=0.85\textwidth,permil=true,viewport=8 210 550 364,clip=true]%
			{FCP_strip_sphi_pi_6_sa_0_alpha_1_3_ac_pi_2_phic_pi_2_n1}
		\else
			\begin{overpic}[width=0.85\textwidth,permil=true,viewport=8 210 550 364,clip=true]
				{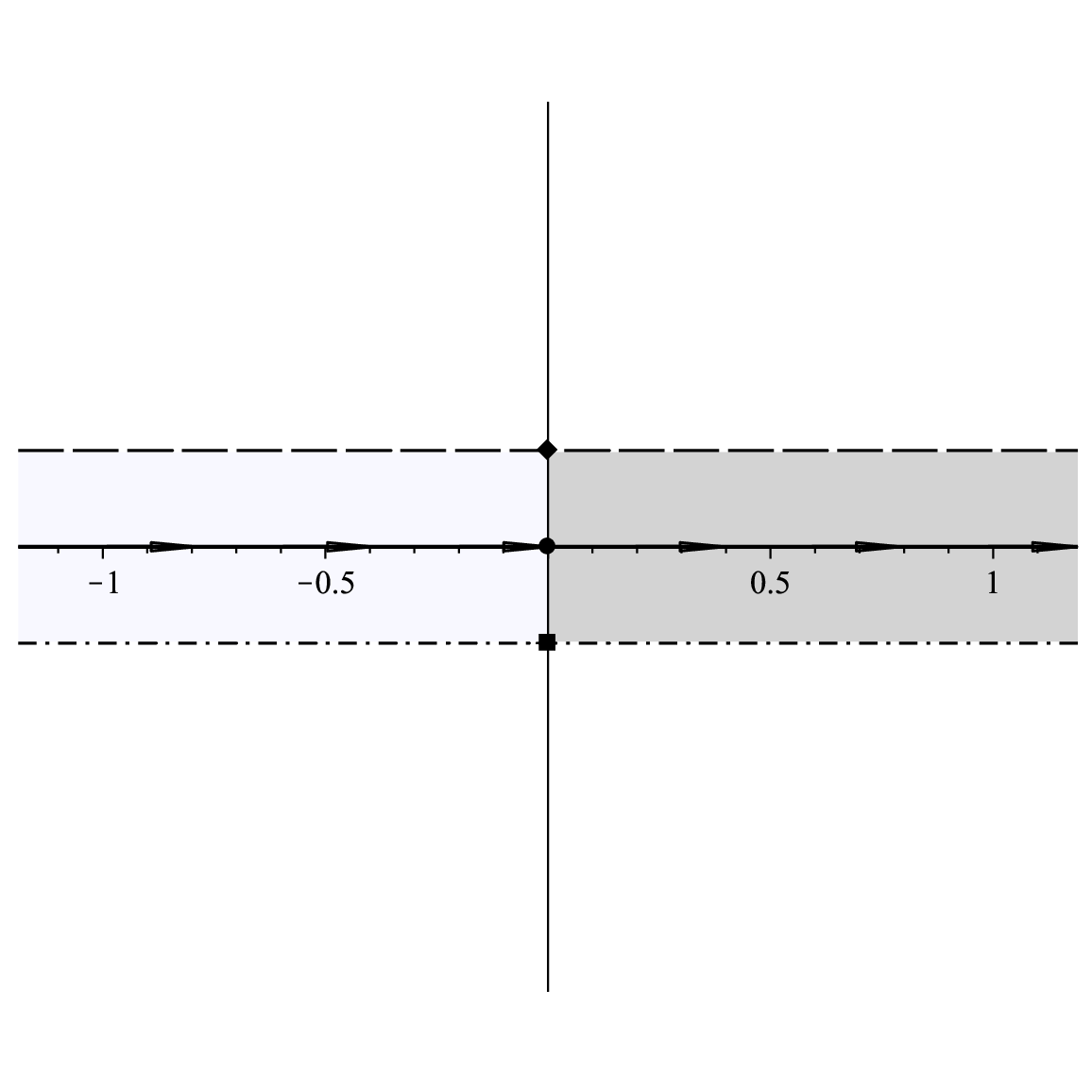}
				\put(470,180){$d$}
				\put(440,50){$-d$}
				\put(970,90){$\xi$}
				\put(505,260){$\nu$}
				\put(0,260){\scriptsize ({\bf a})}
			\end{overpic}%
		\fi
		\\[6pt]
		\iflatexml
			\includegraphics[height=18.5em,viewport=180 207 388 481,clip=true]%
			{FCP_scal_sec_cont_sphi_pi_6_sa_0_alpha_1_3_ac_pi_2_phic_pi_2_n1}
			\includegraphics[height=18.5em,viewport=180 241 422 541,clip=true]%
			{FCP_res_sec_cont_sphi_pi_6_sa_0_alpha_1_3_ac_pi_2_phic_pi_2_n1}
		\else
			\hspace*{0.25em}
			\begin{overpic}[height=18.5em,viewport=180 207 388 481,clip=true]
				{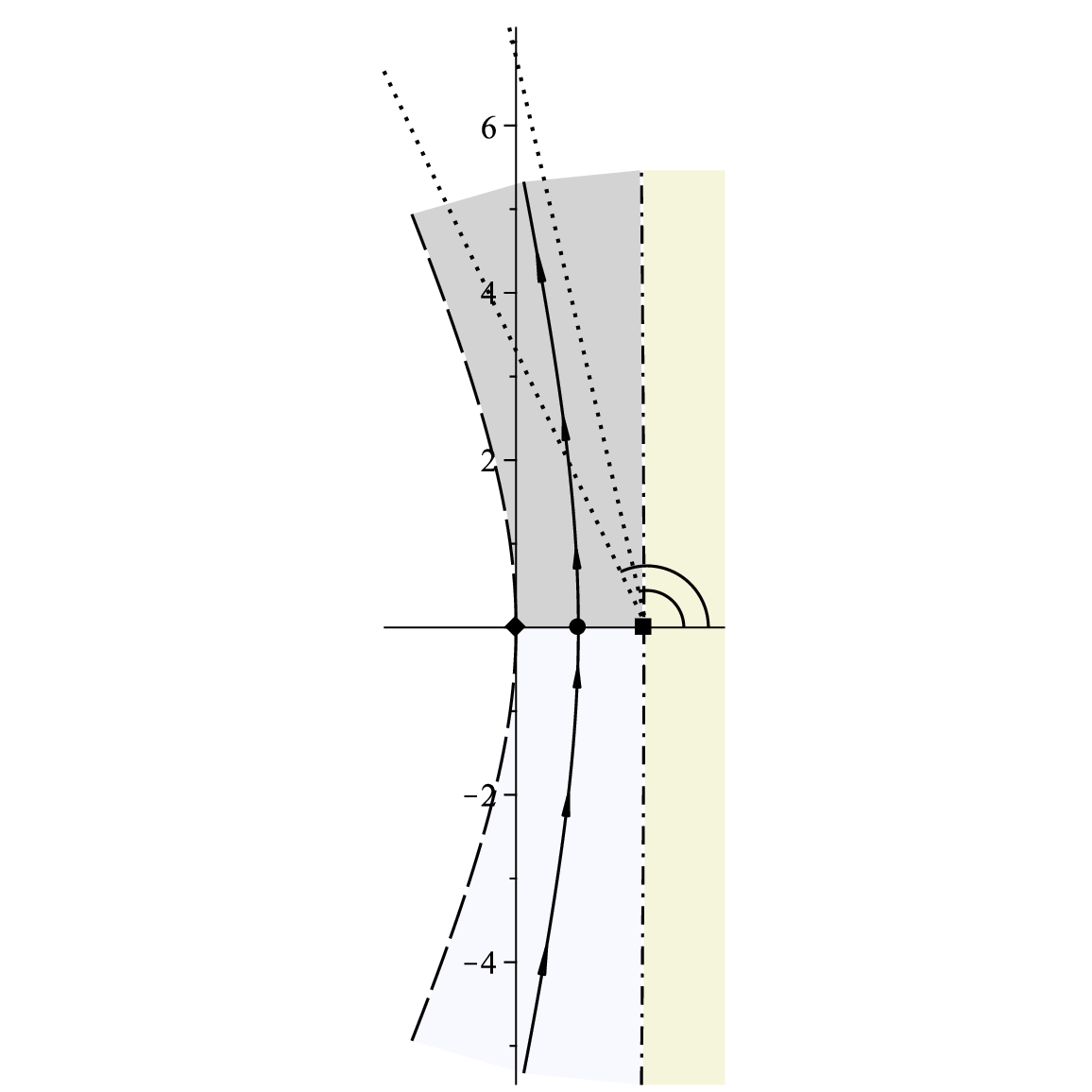}
				\put(9,73){$\Gamma_s$}
				\put(30,65){$\Gamma_I$}
				\put(46,80){$\Gamma_c$}
				\put(54,13){$\varphi_I$}
				\put(60,23){$\phi_s$}
				\put(0,96){\scriptsize ({\bf b})}
			\end{overpic}%
			\hspace*{2.8em}
			\begin{overpic}[height=18.5em,viewport=180 241 422 541,clip=true]
				{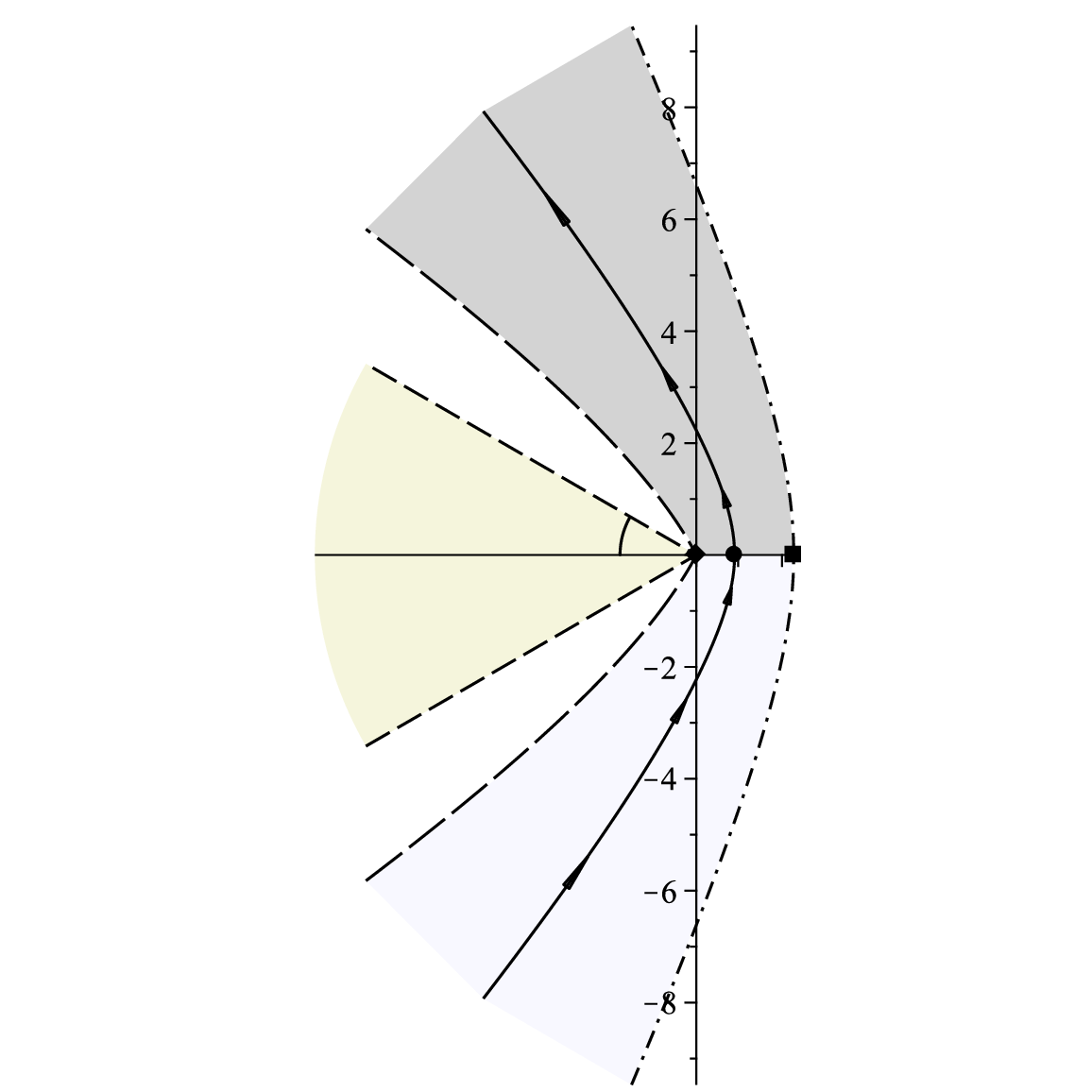}
				\put(15,13){\small $\mathrm{Sp}(-A)$}
				\put(46,12){\small $\varphi_s$}
				\put(0,96){\scriptsize ({\bf c})}
			\end{overpic}%
		\fi
		\caption{Schematic plot of the complex neighborhood $D \equiv D_d$  of $\R$ where the parametrized integrand $\cF_\alpha(t,\xi)$ remains analytic and exponentially decaying for any $t \in [0, T]$ (\textbf{a}) along with the image of $D_d$ under the mapping $v \to z(v)$ defined by $\Gamma_I$ (\textbf{b}) and the region $z^\alpha(v)$, $v \in D_d$ (\textbf{c}).
		The "forbidden" regions of complex plane are indicated by ``beige'' color).
		($\alpha =1.3$, $\rho_s = \pi$, $\varphi_s = {\pi}/{6}$).
		}
		\label{fig:FCP_hyp_cont}
	\end{centering}
\end{figure}
Consequently, the pair of positive contour parameters  $a_I$, $b_I$ is admissible if $z(\xi) \in \Sigma(0, \frac{\pi - \varphi_s}{\alpha}) \setminus \Sigma \left (a,  \tfrac{\pi}{2}\right ) $, for some $a>a_0$, i.e.,
\begin{equation}\label{eq:FCP_hyp_cont_par_adm_constr}
	\begin{aligned}
		\tan{\frac{\pi - \varphi_s}{\alpha}} \leq -\frac{b_I}{a_I}	, \quad
		a_0 - a_I \geq 0.
	\end{aligned}
\end{equation}

Next, we move on to derive exact formulas for $a_0$, $a_I$, $b_I$.
Let us assume that the chosen set of parameters satisfies \eqref{eq:FCP_hyp_cont_par_adm_constr}.
The substitution of $z(\xi)$ from \eqref{eq:int_cont_hyp_crit_sec}  into  \eqref{eq:FCP_SO_cont_repr} yields
\begin{equation}\label{eq:FCP_SO_exp_repr_par}
	S_\alpha(t) x
	= \frac{1}{2\pi i} \intl_{-\infty}^{\infty} \cF_\alpha(t,\xi) x  d\xi, \quad \cF_\alpha(t,\xi)  =  e^{z(\xi)t}z'(\xi) z^{\alpha-1}(\xi) \left (z^\alpha (\xi) I + A\right )^{-1},
\end{equation}
where
$z'(\xi) = -a_I\sinh(\xi) + ib_I\cosh(\xi)$.
The illustration provided by \cref{fig:FCP_hyp_cont} shows that both scalar and operator parts of the parametrized integrand $\cF_\alpha(t,\xi)$, $t \in [0, T]$  remain analytic when $\xi$ is extended into a certain complex neighborhood $D$ of $\R$.
According to the general theory of numerical integration \cite{Davis1984}, an accuracy of quadrature formula is characterized by a norm of the error-term in the Hardy space $\mathbf{H}^p(D)$
of functions, defined on the domain $D \subset \oC$.
The shape of $D$ depends on the chosen type of quadrature.
For the reasons that are soon to be understood,  we approximate integral \eqref{eq:FCP_SO_exp_repr_par} by the sinc-quadrature formula  \cite{Stenger1993,gm5}:
\begin{equation}\label{eq:FCP_SO_sinc_quad}
	S_\alpha(t) x \approx \frac{h}{2 \pi i}\sum_{k=-N}^{N}\cF_\alpha \left (t,kh\right ) x,
\end{equation}
with the discretization parameter $N \in \N$ and the step-size $h= h(N,\cF_\alpha)$.
Then, $D$ is formed by an infinite horizontal strip $D_d$ of the half-height $d$:
\[
	D_d=\left \{z \in \mathbb{C}: - \infty < \Re z < \infty, |\Im z|<d \right \}.
\]

The detailed error analysis of \eqref{eq:FCP_SO_sinc_quad} will be presented in \cref{sec:FCP_Prop_Approx}.
For now, it is sufficient to say that the error of sinc-quadrature decays as  ${\mathcal O}(e^{-\pi d/h})$ if the integrand is exponentially decaying and belongs to $\mathbf{H}^p(D_d)$ \cite{Stenger1993}.
Thus, in order to achieve a faster convergence rate of quadrature \eqref{eq:FCP_SO_sinc_quad}, we need to  maximize the height of the strip $D_d$, where $\cF_\alpha$ remains analytic, by tuning the parameters of $\Gamma_I$.

Let us consider the family of curves $\Gamma(\nu) = \{a_0 - a_I \cosh{(\xi+i v)} + ib_I \sinh{(\xi+i v)}:\, \xi \in (-\infty, \infty)\},$
which extends the definition of $\Gamma_I = \Gamma(0)$ to the arguments with nonzero imaginary part $\nu$.
Observe, that for a fixed $\nu>0$, the curve $\Gamma(\nu)$ is also a hyperbola, albeit with different semi-axes $a(\nu)$, $b(\nu)$:
\begin{equation}\label{eq:cont_hyp_par_family}
	\begin{aligned}
		\Gamma(\nu) & = \{a_0 - a(\nu) \cosh{\xi} + ib(\nu) \sinh{\xi}: \; \xi\in(-\infty,\infty)\}, \\
		a(\nu)      & = a_I \cos{\nu}+b_I\sin{\nu},	 \quad
		b(\nu) = b_I \cos{\nu}-a_I\sin{\nu}.
	\end{aligned}
\end{equation}
Hence, the  mapping $w \rightarrow z(w)$ transforms $D_d$ into the region of complex plane
bounded by two hyperbolas $z(\xi + id)$, $z(\xi - id)$, which will be denoted as $\Gamma_s$ and $\Gamma_c$, correspondingly.
We choose parameters $a_0$, $a_I$, $b_I$, so that $\Gamma_s$ has the vertex at zero
and its asymptotes form the angle $\phi_s \equiv \min{\left\{\pi, \frac{\pi - \varphi_s}{\alpha}\right\}}$ with $\R_+$, as shown in \cref{fig:FCP_hyp_cont}~(\textbf{c}).
In addition, we require that the asymptotes of  $\Gamma_c$ form the angle
$\phi_c \in \left[\tfrac{\pi}{2}, \phi_s\right)$ with $\R_+$,  which will be called a critical angle; see \cref{fig:FCP_hyp_cont}~(\textbf{b}).
The above requirements for $\Gamma_I$, $\Gamma_s$, $\Gamma_c$ are codified in the following system of equations:
\begin{equation*}
	\begin{cases}
		\Re{z(i d )} & = 0,                                                                                \\
		-b(d)        & = a(d)\tan{\phi_s},                                                                 \\
		\tan{\phi_c} & = \lim\limits_{\xi \rightarrow \infty} \frac{\Im{z(\xi - id)}}{\Re{z(\xi - i d )}},
	\end{cases}
	\Leftrightarrow
	\begin{cases}
		a_I \cos{d} + b_I \sin{d}                                   & = a_0,              \\
		a_I \sin{d} - b_I \cos{d}                                   & = a_0\tan{\phi_s} , \\
		\frac{a_I \sin{d} + b_I \cos{d}}{b_I \sin{d} - a_I \cos{d}} & =\tan{\phi_c},
	\end{cases}
\end{equation*}
which is sufficient to ensure \eqref{eq:FCP_hyp_cont_par_adm_constr} and will lead to the maximal possible $d$, when $\phi_c = \pi/2$.
The system composed from the first two equations is linear with respect to $a_I$, $b_I$; thus,
\begin{equation}\label{eq:FCP_hyp_cont_aIbI}
	\begin{aligned}
		a_I
		 & =a_0(\cos{d} + \tan{\phi_s}\sin{d})
		= \frac{a_0}{\cos{\phi_s}} \cos{(d - \phi_s)},
		\\
		b_I
		 & = a_0(\sin{d} - \cos{d}\tan{\phi_s} )
		= \frac{a_0}{\cos{\phi_s}} \sin{(d - \phi_s)}.
	\end{aligned}
\end{equation}
By that means, the left-hand side of the third equation is transformed as
\begin{equation*}
	\frac{a_I \sin{d} + b_I \cos{d}}{b_I \sin{d} - a_I \cos{d}}
	=\frac{\sin{2d} - \tan{\phi_s}\cos{2d}}{-\cos{2d} - \tan{\phi_s}\sin{2d} }
	=\frac{\tan{\phi_s} - \tan{2d}}{1 + \tan{\phi_s}\tan{2d}}
	=\tan{(\phi_s - 2d)},
\end{equation*}
which, after back-substitution, implies $\tan{(\phi_s - 2d)} = \tan{\phi_c}$.
Due to the constraints on $d$, $\phi_c$, $\phi_s$ we are interested only in the following solution
of the last equation:
\begin{equation}\label{eq:FCP_hyp_cont_d}
	d = \frac{1}{2}\left(\phi_s - \phi_c \right).
\end{equation}
For $\phi_c = \pi/2$, and an arbitrary fixed $a_0 > 0$ we obtain
\begin{equation}\label{eq:FCP_hyp_cont_par_final}
	\begin{aligned}
		\phi_s = \min{\left\{\pi, \frac{\pi - \varphi_s}{\alpha}\right\}},
		 & \quad d  = \frac{\phi_s}{2} - \frac{\pi}{4}, \\
		a_I
		= \frac{a_0}{\cos{\phi_s}} \cos{\left (\frac{\phi_s}{2} + \frac{\pi}{4}\right )},
		 & \quad b_I
		= -\frac{a_0}{\cos{\phi_s}} \sin{\left (\frac{\phi_s}{2} + \frac{\pi}{4}\right )}.
	\end{aligned}
\end{equation}
Here, $\alpha \in (0,2)$ is the order of fractional derivative from \eqref{eq:FCP_DE}, $\varphi_s$ is the spectral angle parameter defined in \eqref{eq:SpSector} and $a_0 \in \R_+$ is given.

\section{Numerical Method}\label{sec:FCP_num_method}
To begin with the description of the numerical scheme, let us introduce some notation.
We rewrite formula \eqref{eq:FCP_InhomSol_rep} in the form
\[
	u(t) = u_{\mathrm{h}}(t) + u_{\mathrm{ih}}(t).
\]
Here, $u_{\mathrm{h}}(t)$ denotes the solution to the homogeneous part ($f(t)\equiv0$) of the given \linebreak{problem \eqref{eq:FCP_DE}, \eqref{eq:FCP_BC}} and $u_\mathrm{ih}(t)$ the solution to the inhomogeneous part ($u_0=u_1 \equiv 0$):
\begin{equation}\label{eq:FCP_hom_inhom}
	u_{\mathrm{h}}(t)  = S_{\alpha,1}(t) u_0 + S_{\alpha,2}(t) u_1, \quad
	u_{\mathrm{ih}}(t) =  J_\alpha S_{\alpha}(t)f(0) + \intl_0^t S_{\alpha}(t - s)   J_\alpha f'(s) d s .
\end{equation}

\subsection{Alternative Propagator Representation}
We consider the representation of the solution to the homogeneous part $u_{\mathrm{h}}(t)$ first.
In the seminal paper \cite{gm5}, Gavrylyuk and Makarov showed that the numerical method for $S_1(t) = e^{-At}$ naively obtained from representation \eqref{eq:FCP_SO_cont_repr} is unsuitable for small values of $t$ because its accuracy degrades when $t$ approaches 0.
They traced back the root cause of this behavior to the fact that the considered representation of $e^{-At}$ is, formally speaking, divergent at $t=0$, which result in the unremovable error of the quadrature-based numerical method for such $t$.
It turns out that propagator representation \eqref{eq:FCP_SO_cont_repr} poses the same adverse feature for any fractional $\alpha$.
One could learn more about its impact on a numerical solution of \eqref{eq:FCP_DE} by analyzing the results of works \cite{Fischer2019,Rieder2023}.

In order to get around the divergence issue, we propose an alternative formula for $S_{\alpha,1}(t)$, constructed in the vein of \cite{gmv,gm5}.
It is based on the following proposition, which can be regarded as a generalization of Lemma 3.3 from \cite{McLean2010}.

\begin{proposition}\label{prop:FCP_res_cor}
	Let  $A$ be the sectorial operator satisfying the conditions of \cref{thm:FCP_sol_rep}.
	If $x \in D(A^{m + \gamma})$ and  $z^\alpha \notin \mathrm{Sp}(A) \cup \{0\}$, then
	for any $\gamma \geq 0$
	\begin{equation}\label{eq:FCP_res_cor_norm_est}
		\left\| z^{\alpha-\beta}(z^\alpha I-A)^{-1} x - \frac{1}{z^\beta}\suml_{k=0}^{m}\frac{A^k x}{z^{\alpha k}}\right \|
		\leq \frac{K(1+M) \left \|A^{m+\gamma} x \right \|}{|z|^{m\alpha + \beta}(1+|z|^\alpha)^\gamma} ,
	\end{equation}
	with some constant  $K > 0$ and $M$ defined by \eqref{eq:ResSector}.
\end{proposition}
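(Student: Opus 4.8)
The plan is to establish the bound by an algebraic manipulation that isolates a clean remainder term, followed by a resolvent estimate. First I would write $w = z^\alpha$ to lighten notation, noting the hypothesis $w \notin \mathrm{Sp}(A) \cup \{0\}$ means both $(wI-A)^{-1}$ and $1/w$ make sense. The starting point is the finite geometric-type identity for the resolvent: one checks by direct computation (multiplying both sides by $(wI-A)$) that
\[
(wI-A)^{-1}x - \frac{1}{w}\sum_{k=0}^{m}\frac{A^k x}{w^{k}} = \frac{1}{w^{m+1}}(wI-A)^{-1}A^{m+1}x,
\]
which is valid whenever $x \in D(A^{m+1})$. Multiplying through by $z^{-\beta}\cdot w = z^{\alpha-\beta}$ — more precisely, recognizing that the left-hand side of \eqref{eq:FCP_res_cor_norm_est} equals $z^{\alpha-\beta}$ times the displayed difference — gives
\[
z^{\alpha-\beta}(z^\alpha I-A)^{-1}x - \frac{1}{z^\beta}\sum_{k=0}^{m}\frac{A^k x}{z^{\alpha k}} = \frac{1}{z^{\beta}\, z^{\alpha m}}(z^\alpha I-A)^{-1}A^{m}x,
\]
so the problem reduces to estimating $\left\| (z^\alpha I - A)^{-1} A^m x\right\|$ in the norm, up to the scalar factor $|z|^{-(m\alpha+\beta)}$.

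**The key estimate.** The remaining task is to bound $\left\|(z^\alpha I - A)^{-1} A^m x\right\|$ by $K(1+M)\|A^{m+\gamma}x\|/(1+|z|^\alpha)^\gamma$. I would treat the two extreme cases $\gamma = 0$ and "$\gamma$ large" and then interpolate, or argue directly. For $\gamma = 0$ this is just the resolvent bound \eqref{eq:ResSector} applied at the point $z^\alpha$ (which lies outside $\Sigma$ because the contour $\Gamma$ was chosen so that $z^\alpha$ is positively oriented with respect to $-\Sigma$), giving $\|(z^\alpha I-A)^{-1}\| \le M/(1+|z|^\alpha)$, hence $\|(z^\alpha I - A)^{-1}A^m x\| \le M\|A^m x\|/(1+|z|^\alpha) \le M\|A^m x\|$ trivially absorbing the denominator into the constant when $\gamma=0$. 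For general $\gamma > 0$ the natural route is to write $A^m x = A^{-\gamma}A^{m+\gamma}x$ and estimate $\|(z^\alpha I - A)^{-1}A^{-\gamma}\|$. One uses the standard fact that for a strongly positive (sectorial) operator the fractional power $A^{-\gamma}$ satisfies, together with the resolvent bound, an inequality of the form $\|(\lambda I - A)^{-1}A^{-\gamma}\| \le C/(|\lambda|^{\gamma}(1+|\lambda|)^{1-\gamma})$ or, in the cleaner form suited here, $\|(\lambda I - A)^{-1}A^{-\gamma}\| \le C(1+M)/(1+|\lambda|)^{\gamma}$ for $\lambda$ outside the sector and $\gamma \in [0,1]$; this is proved via the Dunford–Cauchy / Balakrishnan integral representation of $A^{-\gamma}$. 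Setting $\lambda = z^\alpha$ and combining with the identity from the first step yields \eqref{eq:FCP_res_cor_norm_est}. For $\gamma > 1$ one peels off integer powers: $A^{-\gamma} = A^{-\lfloor\gamma\rfloor}A^{-(\gamma-\lfloor\gamma\rfloor)}$, using $\|A^{-1}(z^\alpha I-A)^{-1}\|$-type bounds repeatedly, or simply absorbs extra integer powers of $A$ into the index $m$ and reduces to $\gamma \in [0,1]$.

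**Main obstacle.** The algebraic identity in the first step is routine; the crux is the fractional-power resolvent bound $\|(z^\alpha I - A)^{-1}A^{-\gamma}\| \lesssim (1+M)(1+|z|^\alpha)^{-\gamma}$ with the correct dependence on $M$ and with a constant $K$ that is uniform in $z$ on the admissible contour. The delicate points there are: (i) making sure the estimate holds uniformly for all $z$ on $\Gamma_I$, i.e. for all $w=z^\alpha$ in the region shown in \cref{fig:FCP_hyp_cont}~(\textbf{c}), which requires the sectoriality of $A$ exactly in the form guaranteed by the spectral-angle condition $\varphi_s < \tfrac{\pi}{2}\min\{1,\alpha^{-1}\}$; (ii) tracking the constant so that it is genuinely of the form $K(1+M)$ rather than something worse like $K(1+M)^2$ — this works because $A^{-\gamma}$ is defined through an integral of the resolvent whose norm is controlled by $M$, and one $M$-power is "spent" on that integral while the overall resolvent at $z^\alpha$ contributes the factor $(1+|z|^\alpha)^{-1}$; and (iii) handling the interpolation in $\gamma$ cleanly (e.g. by moment inequalities for fractional powers or by a direct Balakrishnan-integral estimate) so that the bound is continuous in $\gamma$ down to $\gamma = 0$ and up through all $\gamma \ge 0$. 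I expect step (ii), the careful bookkeeping of the constant, to be the part that needs the most care, while the existence of the fractional-power bound itself is standard for strongly positive operators and can be cited from \cite{bGavrilyuk2011} or \cite{fujita}.
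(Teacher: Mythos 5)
Your overall strategy is the same as the paper's: expand $(z^\alpha I-A)^{-1}=z^{-\alpha}(I-A/z^\alpha)^{-1}$ as a Neumann/geometric series, identify the tail as a single resolvent applied to a higher power of $A$, and then trade a fractional power of $A$ against the decay of the resolvent. However, there is a concrete off-by-one error that derails the second half. Your first displayed identity is correct and has $A^{m+1}$ in the remainder; after multiplying by $z^{\alpha-\beta}$ the remainder is
\[
\frac{1}{z^{\beta+\alpha m}}\,(z^\alpha I-A)^{-1}A^{m+1}x ,
\]
but your second display silently replaces $A^{m+1}$ by $A^{m}$. This matters, because the lemma you then set out to prove, $\|(z^\alpha I-A)^{-1}A^{-\gamma}\|\lesssim (1+|z|^\alpha)^{-\gamma}$, is the wrong target: it is trivially true for $\gamma\in[0,1]$ (the bare resolvent bound \eqref{eq:ResSector} already gives the stronger decay $(1+|z|^\alpha)^{-1}$ and $A^{-\gamma}$ is bounded for strongly positive $A$), which should have signalled that no genuine $\gamma$-dependent estimate is being used and hence that the factor $(1+|z|^\alpha)^{-\gamma}$ in \eqref{eq:FCP_res_cor_norm_est} is not really being earned.

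The fix is to keep $A^{m+1}$ and split it as $A^{m+1}=A^{1-\gamma}\,A^{m+\gamma}$ (using that $A^{1-\gamma}$ commutes with the resolvent), so that the estimate actually required is the moment-type inequality
\[
\left\|A^{1-\gamma}(\lambda I-A)^{-1}\right\|\;\le\;\frac{K(1+M)}{(1+|\lambda|)^{\gamma}},\qquad \lambda\notin\Sigma,\ \gamma\in[0,1],
\]
applied at $\lambda=z^\alpha$; this is exactly inequality (2.30) of \cite{bGavrilyuk2011}, which the paper invokes at this point. Here one full power of $A$ beyond $A^{m+\gamma}$ must be spent on the resolvent to convert its decay $(1+|\lambda|)^{-1}$ into $(1+|\lambda|)^{(1-\gamma)-1}=(1+|\lambda|)^{-\gamma}$. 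With this correction the rest of your outline (uniformity of the constant over the admissible contour, the bookkeeping producing $K(1+M)$) goes through and coincides with the paper's argument. Note also that this route inherently restricts to $\gamma\in[0,1]$ — consistent with the paper's later use of $\gamma\in(0,1)$ — so your proposed peeling of integer powers for $\gamma>1$ would not rescue the stated bound in that regime.
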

\begin{proof}
	The function $z^\alpha R(z^\alpha) =  \left (I-\tfrac{A}{z^\alpha} \right )^{-1}$ remains analytic and bounded for any  $z^\alpha \notin \mathrm{Sp}(A) \cup \{0\}$, so its Neumann series converges unconditionally.
	Therefore,
	\begin{multline*}
		\left\| z^{\alpha-\beta}(z^\alpha I-A)^{-1}  - \frac{1}{z^\beta}\suml_{k=0}^{m}\frac{A^k }{z^{\alpha k}}\right \|
		=  \left\| \frac{1}{z^\beta} \left (I-\frac{A}{z^\alpha} \right )^{-1}  - \frac{1}{z^\beta}\suml_{k=0}^{m}\frac{A^k }{z^{\alpha k}}\right \|\\
		= \left\| \frac{1}{z^\beta}\left(\suml_{k=0}^{\infty}\frac{A^k }{z^{\alpha k}} - \suml_{k=0}^{m}\frac{A^k }{z^{\alpha k}} \right) \right\|
		= \left\| \frac{1}{z^\beta} \left (I-\frac{A}{z^\alpha} \right )^{-1} \frac{A^{m+1} }{z^{\alpha (m+1)}} \right\| \\
		= \frac{1}{|z|^{\alpha m+\beta}}\left\| (z^\alpha I-A)^{-1}A^{m+1}\right\|
		= \frac{1}{|z|^{\alpha m+\beta}}\left\| A^{1-\gamma} (z^\alpha I-A)^{-1}A^{m+\gamma}\right\|.
	\end{multline*}
	the last transformation is justified by the fact that $R(z,A)$ and $A^{1-\gamma}$ commutes.
	Target estimate \eqref{eq:FCP_res_cor_norm_est} follows directly from the above formula, after we apply inequality (2.30) from \cite{bGavrilyuk2011} with $z= z^\alpha$.
\end{proof}
It is worth noting that, if the argument $x$ posses certain spatial regularity  $x \in D(A^{\gamma})$, $\gamma >0$,
estimate \eqref{eq:FCP_res_cor_norm_est} guarantees a faster decay of the corrected term's norm $\|z^{\alpha-\beta}(z^\alpha I+A)^{-1} x - z^{-\beta}x\| < C |z|^{-\beta - \alpha\gamma}$, as $z \rightarrow \infty$, when compared to the norm of the original term in \eqref{eq:FCP_SO_cont_repr} bounded by $\left\|z^{\alpha-\beta}(z^\alpha I+A)^{-1} \right\| < C |z|^{-\beta}$, $C>0$.

The next result defines an improper integral representation for the components of $u_{\mathrm{h}}(t)$ from \eqref{eq:FCP_hom_inhom} and shows the way how the aforementioned correction is incorporated into the formula for $S_{\alpha}(t)$.
\begin{lemma}\label{lem:FCP_Pro_parametrized}
	{Assume that the given $A$ and $\alpha$ satisfy the conditions of \cref{thm:FCP_sol_rep}.
		For any $u_0 \in D(A^\gamma)$, $\gamma > 0$ and $u_1 \in X$ the operator functions $S_\alpha(t)u_0$, $S_{\alpha,2}(t)u_1$ admit the following representation:}
	\begin{eqnarray}
		\begin{aligned}
			S_{\alpha}(t) u_0
			 & = \frac{1}{2\pi i} \intl_{-\infty}^{\infty}e^{z(\xi)t}F_{\alpha,1}(\xi)u_0\,d\xi + u_0 ,                \\
			F_{\alpha,1} (\xi)
			 & = z'(\xi)\left( z^{\alpha-1}(\xi)\left (z^{\alpha}(\xi) I + A\right )^{-1} - \frac{1}{z(\xi)}I \right),
		\end{aligned}\label{eq:FCP_SO_exp_cor_repr_par}\\
		\begin{aligned}
			S_{\alpha,2}(t) u_1
			 & = \frac{1}{2\pi i} \intl_{-\infty}^{\infty}e^{z(\xi)t}F_{\alpha,2}(\xi)u_1\,d\xi,		\hspace*{5.3em} \\
			F_{\alpha,2} (\xi)
			 & = z'(\xi)z^{\alpha-2}(\xi)\left (z^{\alpha}(\xi) I + A\right )^{-1},
		\end{aligned} \label{eq:FCP_ISO_exp_cor_repr_par}
	\end{eqnarray}
	where $z(\xi) = a_0 - a_I\cosh(\xi) + ib_I\sinh(\xi), \quad \xi \in	(-\infty, \infty )$ and $a_0$, $a_I$, $b_I$ are specified by \eqref{eq:FCP_hyp_cont_par_final}.
	Moreover, for arbitrary finite $t \geq 0$ integrals in \eqref{eq:FCP_SO_exp_cor_repr_par} and \eqref{eq:FCP_ISO_exp_cor_repr_par} are uniformly convergent.
\end{lemma}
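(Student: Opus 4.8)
The plan is to start from the contour representation \eqref{eq:FCP_SO_cont_repr} for $S_{\alpha,\beta}(t)$, substitute the hyperbolic parametrization $z=z(\xi)$ from \eqref{eq:int_cont_hyp_crit_sec} with the parameters fixed by \eqref{eq:FCP_hyp_cont_par_final}, and then, for the $S_\alpha(t)u_0$ part, split off the ``resolvent correction'' term $z^{-1}I$ and show that what is subtracted integrates to exactly $u_0$. First I would verify admissibility: the constraints \eqref{eq:FCP_hyp_cont_par_adm_constr} are met by construction in \cref{sec:FCP_hyp_contour}, so $\Gamma_I$ encircles $-\mathrm{Sp}(A)\cup\{0\}$ with the correct orientation of $z^\alpha(\xi)$, and hence the parametrized identities $S_{\alpha,\beta}(t)x=\tfrac{1}{2\pi i}\intl_{-\infty}^{\infty}e^{z(\xi)t}z'(\xi)z^{\alpha-\beta}(\xi)(z^\alpha(\xi)I+A)^{-1}x\,d\xi$ hold with $z'(\xi)=-a_I\sinh\xi+ib_I\cosh\xi$. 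For $\beta=2$ this is already \eqref{eq:FCP_ISO_exp_cor_repr_par}, so only convergence remains to be argued there; for $\beta=1$ I must additionally justify the correction.

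For the correction term I would argue as follows. Consider the closed contour obtained by joining $\Gamma_I$ (run in the admissible orientation) with a large circular arc. On and to the right of $\Gamma_I$ the scalar function $e^{zt}z^{-1}$ is analytic except for the simple pole at $z=0$, which lies to the left of $\Gamma_I$; thus $\tfrac{1}{2\pi i}\intl_{\Gamma_I}e^{zt}z^{-1}\,dz$, read as a contour integral encircling the origin once in the positive sense (which is exactly the orientation forced on us by the requirement that $z^\alpha(\xi)$ be positively oriented with respect to $-\Sigma\cup\{0\}$), equals the residue $1$; the arc contribution vanishes because $|e^{zt}z^{-1}|$ decays like $|z|^{-1}$ on the arc in the left half where $\Re z\to-\infty$, and for $t>0$ even faster, while for $t=0$ one closes to the left and invokes Jordan-type decay. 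Consequently $\tfrac{1}{2\pi i}\intl_{-\infty}^{\infty}e^{z(\xi)t}z'(\xi)z^{-1}(\xi)\,d\xi = 1$, so adding $+u_0$ back compensates exactly the term removed inside $F_{\alpha,1}$, and \eqref{eq:FCP_SO_exp_cor_repr_par} is the genuine value of $S_\alpha(t)u_0$. Here I would take some care to note that $u_0\in D(A^\gamma)\subset D(A^{0+})$ suffices for all manipulations, since the correction is purely scalar and the operator part is never differentiated.

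The remaining, and in my view principal, task is the uniform convergence of the two improper integrals for $t\in[0,T]$, i.e. integrability of the integrands in $\xi$ uniformly in $t$. The key point is the decay of $z'(\xi)z^{\alpha-\beta}(\xi)(z^\alpha(\xi)I+A)^{-1}$ as $|\xi|\to\infty$: on $\Gamma_I$ one has $|z(\xi)|\sim \tfrac12\sqrt{a_I^2+b_I^2}\,e^{|\xi|}$ and $|z'(\xi)|\sim \tfrac12\sqrt{a_I^2+b_I^2}\,e^{|\xi|}$, while $\Re z(\xi)\to-\infty$ like $-a_I\cosh\xi$, so $|e^{z(\xi)t}|\le 1$ for $t\ge 0$ and is exponentially small for $t>0$. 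For the $S_{\alpha,2}$ integrand the resolvent bound \eqref{eq:ResSector} applied with argument $z^\alpha$ gives $\|z^{\alpha-2}(z^\alpha I+A)^{-1}\|\le M|z|^{-2}$, so the integrand is $O(|z'(\xi)|\,|z(\xi)|^{-2})=O(e^{-|\xi|})$, uniformly in $t\ge 0$; hence absolute and uniform convergence. For the $S_\alpha$ integrand one cannot use $\|z^{\alpha-1}(z^\alpha I+A)^{-1}\|\le M|z|^{-1}$ directly (that is only $O(|z'|\,|z|^{-1})=O(1)$, not integrable), which is precisely why the correction is needed: by \cref{prop:FCP_res_cor} with $m=0$, $\beta=1$ and the given $\gamma>0$, the corrected scalar-operator symbol satisfies $\|z^{\alpha-1}(z^\alpha I+A)^{-1}u_0 - z^{-1}u_0\|\le C|z|^{-1}(1+|z|^\alpha)^{-\gamma}\|A^\gamma u_0\|$, so $F_{\alpha,1}(\xi)u_0 = O(|z'(\xi)|\,|z(\xi)|^{-1-\alpha\gamma}) = O(e^{-\alpha\gamma|\xi|})$, again uniformly in $t\ge 0$ since $|e^{z(\xi)t}|\le 1$. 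Thus both integrals converge absolutely, uniformly in $t\in[0,T]$; the dominated-convergence argument then also legitimizes the interchange of integration and limit used when passing from the closed-contour residue computation to the parametrized form. The only subtlety I anticipate is the endpoint $t=0$ in the residue step (where one must close the arc on the correct side and control it without the help of the exponential factor); this is handled exactly as in \cite{gm5}, using the $O(|z|^{-1})$ decay of $e^{zt}z^{-1}$ together with the asymptotic opening of $\Gamma_I$ into the left half-plane (angle $\phi_s>\pi/2$), which makes the arc integral over the far-left portion negligible.
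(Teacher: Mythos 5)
Your proposal is correct and follows essentially the same route as the paper: direct parametrization of \eqref{eq:FCP_SO_cont_repr} on $\Gamma_I$ for $\beta=2$, insertion of the residue identity $\tfrac{1}{2\pi i}\intl_\Gamma e^{zt}z^{-1}\,dz=1$ to produce the correction for $\beta=1$, and \cref{prop:FCP_res_cor} (with $m=0$) plus the resolvent bound to get the exponential decay in $\xi$ that yields uniform convergence on $[0,T]$. The only slip is the claim $|e^{z(\xi)t}|\le 1$, which fails near $\xi=0$ where $\Re z(\xi)=a_0-a_I\cosh\xi>0$; the paper's bound $|e^{zt}|\le\max\{e^{\Re(z)t},1\}\le e^{a_0 t}$ is what is actually needed, and it does not affect your conclusion.
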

\begin{proof}
	Assume $\Gamma$ is a contour fulfilling the conditions of \cref{thm:FCP_sol_rep}.
	Due to the estimate $\|S_{\alpha,\beta}(t)x\| \leq  C e^{rt} \int_\Gamma  \tfrac{ |z|^{\alpha-\beta}}{1+|z|^\alpha} dz$ from \cite{SytnykWohlmuth2023}, the integral representation of operator function $S_{\alpha,2}(t)$  is uniformly convergent for any bounded non-negative $t$.
	Formula \eqref{eq:FCP_ISO_exp_cor_repr_par} is obtained as a result of the parametrization of \eqref{eq:FCP_SO_cont_repr} on the contour $\Gamma_I$ defined by \eqref{eq:int_cont_hyp_crit_sec}.
	In order to prove~\eqref{eq:FCP_SO_exp_cor_repr_par}, we apply the identity
	$\frac{1}{2\pi i} \intl_\Gamma  {e^{zt}}/{z}\,dz = \Res\limits_{z=0} {e^{zt}}/{z}  = 1$
	to rewrite \eqref{eq:FCP_SO_cont_repr} with $\beta=1$ in the following manner:
	\begin{equation}\label{eq:FCP_SO_cor_repr}
		\begin{aligned}
			S_{\alpha}(t) u_0
			 & = S_{\alpha,1}(t) u_0 - \frac{1}{2\pi i} \intl_\Gamma  \frac{e^{zt}}{z} u_0\,dz   + u_0                                  \\
			 & = \frac{1}{2\pi i} \intl_\Gamma  e^{zt} \left( z^{\alpha-1} (z^\alpha I + A)^{-1} - \frac{1}{z}I \right) u_0\, dz + u_0.
		\end{aligned}
	\end{equation}
	\Cref{prop:FCP_res_cor} and the inequality $|e^{zt}| < \max{\{e^{\Re(z)t},1\}}$, $t \in [0, T]$, $T>0$ guarantee that the last integral converges uniformly,
	so we are permitted to parameterize it on the contour $\Gamma = \Gamma_I$ defined by \eqref{eq:int_cont_hyp_crit_sec}.
	This yields representation \eqref{eq:FCP_SO_exp_cor_repr_par}.
\end{proof}
\Cref{lem:FCP_Pro_parametrized} is essential for all remaining analysis.
Unlike \eqref{eq:FCP_SO_cont_repr} or \eqref{eq:FCP_SO_exp_repr_par}, the new representation of $S_{\alpha,1}(t)$ by formula \eqref{eq:FCP_SO_exp_cor_repr_par} remains convergent at $t=0$.
For that matter, it can be used as vehicle for the uniformly convergent numerical method.
We shall use the term "corrected propagator representation" as a reference to  \eqref{eq:FCP_SO_exp_cor_repr_par}.

\subsection{Propagator Approximation}\label{sec:FCP_Prop_Approx}
As we can see from \cref{lem:FCP_Pro_parametrized}, the task of approximating the homogeneous part $u_{\mathrm{h}}(t)$ of the mild solution to \eqref{eq:FCP_DE}, \eqref{eq:FCP_BC}, defined by \eqref{eq:FCP_hom_inhom},  is reduced to the task of numerically evaluating improper integrals \eqref{eq:FCP_SO_exp_cor_repr_par} and \eqref{eq:FCP_ISO_exp_cor_repr_par}.
In this part, we describe how this is achieved using the trapezoidal quadrature rule.
Then, we proceed to study the accuracy of the obtained approximation using the theory of sinc-quadrature \cite{Stenger1993} along with its generalizations to propagator approximations \cite{gm5}.
In what follows, symbols $\wt{S}_{\alpha,\beta}^N(t)$ are used to denote the operators that approximate $S_{\alpha,\beta}(t)$.

For some $h>0$ and  $N \in \N$, let
\begin{equation}\label{eq:FCP_SO_cor_sinc_quad}
	\begin{aligned}
		\wt{S}_{\alpha,1}^N(t) x_1
		 & = \frac{h}{2 \pi i}\sum_{k=-N}^{N} \cF_{\alpha,1} (t, kh) + x_1, \quad \wt{S}_{\alpha,2}^N(t) x_2
		= \frac{h}{2 \pi i}\sum_{k=-N}^{N} \cF_{\alpha,2} (t, kh),
		\\
		\cF_{\alpha, \beta} (t, \xi)
		 & = e^{z(\xi)t}F_{\alpha,\beta} (\xi) x_\beta,  \quad \beta = 1, 2,
		\quad  x_1 \in D(A^\gamma), \quad x_2 \in X.
	\end{aligned}
\end{equation}
The functions $F_{\alpha,2}(\xi)$, $F_{\alpha,2}(\xi)$, $z(\xi)$  and the parameter $\gamma$ in the above formulas for propagator approximations have the meaning prescribed by \cref{lem:FCP_Pro_parametrized}.
Similarly to $S_\alpha(t)$, we use $\wt{S}_{\alpha}^N(t)$ to denote $\wt{S}_{\alpha,1}^N(t)$, where appropriate in the sequel.
Recall that $\Gamma_I$ is symmetric with respect to the real axis; hence, one can further reduce the number of summands in \eqref{eq:FCP_SO_cor_sinc_quad} using the following argument \cite{bGavrilyuk2011}.
\begin{remark}\label{rem:FCP_prop_half_contour}
	Let $\overline{z}$ denote the complex conjugate of $z$. If the operator $A$ is defined in such a way that $R(z,A) + R(\overline{z},A) = 2 R(\Re{z},A)$, for any $z \in \oC \setminus \mathrm{Sp}(A)$ and $x_\beta$ is defined over the field of real numbers, then
	\[
		\frac{h}{2 \pi i}\sum_{k=-N}^{N} \cF_{\alpha,\beta} (t, kh)  = \frac{h}{\pi} \left (\frac{1}{2}\cF_{\alpha, \beta} (t, 0) + \Re{\left \{ \sum_{k=1}^{N} \cF_{\alpha,\beta} (t, kh) \right \}} \right ),
	\]
	and the number of resolvent evaluations for $\wt{S}_{\alpha,\beta}^N(t)$ in formula \eqref{eq:FCP_SO_cor_sinc_quad} can be reduced from $2N+1$ \mbox{to $N+1$.}
\end{remark}

The error of \eqref{eq:FCP_SO_cor_sinc_quad} admits the following decomposition:
\[
	\|S_{\alpha,\beta}(t)x_\beta - \wt{S}_{\alpha,\beta}^N(t)x_\beta \| = \|S_{\alpha,\beta}(t)x_\beta - \wt{S}_{\alpha,\beta}^\infty(t)x_\beta + \wt{S}_{\alpha,\beta}^\infty(t)x_\beta - \wt{S}_{\alpha,\beta}^N(t)x_\beta\|
\]
\[
	\le \frac{1}{2\pi} \left \|\intl_{-\infty}^{\infty}\cF_{\alpha,\beta}\left(t, \xi\right)  d\xi -h\sum_{k=-\infty}^{\infty}\cF_{\alpha,\beta}\left(t, kh\right) \right \| +
	\frac{h}{2 \pi}\left \|\sum_{|k|>N}\cF_{\alpha,\beta}\left (t, kh\right ) \right \|,
\]
where $\|\cdot\|$ is the norm of $X$, as before.
This two-term representation of the error is common in the analysis of the accuracy of sinc-quadrature (see Section 3.2 in \cite{Stenger1993}).
The contribution from the first term is responsible for the replacement of integrals of $\cF_{\alpha,\beta}\left(t,\xi \right)$ from \eqref{eq:FCP_SO_exp_cor_repr_par} and \eqref{eq:FCP_ISO_exp_cor_repr_par} by the infinite series $\wt{S}_{\alpha,\beta}^\infty(t)$ of discrete function values $\cF_{\alpha,\beta}\left(t, kh\right)$.
As such, it is commonly called the discretization error.
To determine the value of $h$, one needs to balance it with the contribution from a so-called truncation-error term, that comes second in the formula above.

Let
$ {\bf H}^1(D_d)$
be a family of all functions $\mathcal{F}: {\mathbb C} \rightarrow X$, which are analytic in
the strip $D_d$, equipped with the norm
\[
	\| {\mathcal{F}} \|_{{\bf H}^1 (D_d)}
	= 	\lim\limits_{\epsilon \rightarrow 0}\intl_{\partial D_d\left (\epsilon \right )}\|{\mathcal F}(z)\||dz| ,
\]
where
$
	D_d(\epsilon)=\{z \in \mathbb{C}:\; | \Re(z)| < 1/\epsilon, \
	|\Im(z)|<d(1-\epsilon)\}
$
and $\partial D_d(\epsilon)$ is the boundary of $ D_d(\epsilon)$.
The discretization errors of \eqref{eq:FCP_SO_cor_sinc_quad} satisfy the estimate \cite{Stenger1993,gm5}:
\begin{equation}\label{eq:FCP_prop_discr_error}
	\left \|S_{\alpha,\beta}(t) x_\beta - \wt{S}_{\alpha,\beta}^\infty(t) x_\beta \right \|	\leq  \frac{e^{-\pi d/h}}{2 \sinh (\pi d/h)}\|{\cF_{\alpha,\beta}}(t,\cdot)\|_{{\bf H}^1(D_d)} .
\end{equation}
Thus, in order to bound this term, one needs to obtain estimates for the ${{\bf H}^1(D_d)}$ norms of the functions $\cF_{\alpha,\beta}(t,z)$, $\beta =1,2$.
These are provided by the next lemma.

\begin{lemma} \label{lem:FCP_SO_ISO_Hp_bound}
	Let  $A$ be a sectorial operator satisfying the conditions of \cref{thm:FCP_sol_rep}.
	For any $t \geq 0$, $\alpha \in (0, 2)$, $x_1 \in D(A^\gamma)$, $x_2 \in X$, $\gamma>0$ and arbitrary small $\delta>0$
	\begin{equation}\label{eq:FCP_SO_exp_cor_repr_norm_est}
		\begin{aligned}
			\|{\cF_{\alpha,1}}(t,\cdot)\|_{{\bf H}^1(D_{d-\delta})}
			 & \leq  C_{\alpha,1}^\pm (\gamma,\delta)e^{a_0 t}\|A^{\gamma}x_1\|, \\
			\|{\cF_{\alpha,2}}(t,\cdot)\|_{{\bf H}^1(D_{d-\delta})}
			 & \leq  C_{\alpha,2}^\pm (\gamma,\delta)e^{a_0 t} \|x_2\|,
		\end{aligned}
	\end{equation}
	with constants $C_{\alpha,\beta}^\pm (\gamma,\delta) = C_{\alpha,\beta}(\gamma, \delta -d) + C_{\alpha,\beta}(\gamma, d - \delta )$ and
	\begin{equation}\label{eq:FCP_SO_exp_cor_repr_norm_const}
		\begin{aligned}
			C_{\alpha,1}(\gamma, \nu) & =
			\frac{K_1 b(\nu)}{ \alpha \gamma (a(\nu) - a_0) r_0^\gamma(\nu)}, \\
			C_{\alpha,2}(\gamma, \nu) & =
			K_2\frac{b(\nu) \left(b^2(\nu)+(a(\nu) - a_0)^2 \right)^{\alpha/2}}{(a(\nu) - a_0)^2  r_0(\nu)},
		\end{aligned}
	\end{equation}
	that are independent of $t$. Here, $K_1, K_2 >0$ and $r_0(\nu) = \inf\limits_{\xi \in \R} r(\xi, \nu)$,
	\begin{equation}\label{eq:FCP_r}
		r(\xi, \nu) =
		\frac{1}{\cosh^\alpha{\xi}} + \left( b^2(\nu)\tanh^2{\xi} + \left (a(\nu) - \frac{a_0}{\cosh{\xi}} \right )^2\right)^{\frac{\alpha}{2}}.
	\end{equation}
\end{lemma}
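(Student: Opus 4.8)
The strategy is to estimate the integrand on the two horizontal boundary lines $\Im w = \pm(d-\delta)$ of the slightly shrunk strip $D_{d-\delta}$ and then integrate in $\xi = \Re w$. On such a line we have $w = \xi + i\nu$ with $\nu = \pm(d-\delta)$, and by the identity \eqref{eq:cont_hyp_par_family} the image curve $z(w)$ is again a hyperbola with semi-axes $a(\nu)$, $b(\nu)$; this is the structural fact that makes everything computable. So the plan is: (i) write down, for $\beta=1,2$, the modulus of $\cF_{\alpha,\beta}(t,w) = e^{z(w)t} F_{\alpha,\beta}(w) x_\beta$ in terms of $\xi$ and $\nu$; (ii) bound the exponential factor by $e^{a_0 t}$ using $\Re z(w) = a_0 - a(\nu)\cosh\xi \le a_0$ (valid because $a(\nu) > 0$ on the admissible range of $\nu$, which must be checked from \eqref{eq:FCP_hyp_cont_par_final} and \eqref{eq:cont_hyp_par_family}); (iii) bound $\|z'(w)\|$ by $b(\nu)\cosh\xi$ up to lower-order terms; (iv) bound the corrected resolvent factor using \cref{prop:FCP_res_cor}, and (v) integrate the resulting scalar bound in $\xi$ over $\R$.

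The heart of the matter is step (iv) combined with (v). For $\beta=2$ the relevant operator factor is $z^{\alpha-2}(w)(z^\alpha I+A)^{-1}$, whose norm is controlled by the strong positivity bound \eqref{eq:ResSector}: $\|z^{\alpha-2}(z^\alpha I+A)^{-1}\| \le M|z|^{\alpha-2}/(1+|z|^\alpha) \le C/(|z|^2 \cdot \text{(denominator involving }|z|^\alpha))$. For $\beta=1$ we must instead use the corrected term $z^{\alpha-1}(z^\alpha I+A)^{-1}-z^{-1}I$, and here \cref{prop:FCP_res_cor} with $m=0$, $\beta=1$ gives $\|F_{\alpha,1}(w)x_1/z'(w)\| \le K(1+M)\|A^\gamma x_1\|/(|z||z|^{\alpha\gamma}\text{-type factor})$ — this is exactly where the hypothesis $x_1\in D(A^\gamma)$ and the gain of $|z|^{-\alpha\gamma}$ enter, and it is what produces the factor $\|A^\gamma x_1\|$ and the exponent $\gamma$ in the constant $C_{\alpha,1}$. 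On the hyperbola, $|z(w)|$ is comparable to the quantity whose $\alpha$-th power is essentially $r(\xi,\nu)$ in \eqref{eq:FCP_r}: indeed $|z(w)|^2 = (a_0 - a(\nu)\cosh\xi)^2 + b^2(\nu)\sinh^2\xi$, and after dividing through by $\cosh^2\xi$ one recognizes the bracket in \eqref{eq:FCP_r}. So the denominators $|z|$ and $1+|z|^\alpha$ are replaced, uniformly in $\xi$, by $r_0(\nu) = \inf_\xi r(\xi,\nu)$ times powers of $\cosh\xi$, and the surviving $\cosh\xi$ powers in the numerator (from $z'$) against those in the denominator leave an integrable tail — producing the $1/(a(\nu)-a_0)$, $1/(a(\nu)-a_0)^2$ factors from $\int_\R d\xi/(\cosh\xi)^{k}$-type integrals and the $1/(\alpha\gamma)$ from $\int_0^\infty e^{-\alpha\gamma\xi}\,d\xi$-type tails.

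Concretely I would: parametrize $\|\cF_{\alpha,\beta}(t,\cdot)\|_{\mathbf H^1(D_{d-\delta})} = \lim_{\epsilon\to0}\int_{\partial D_{d-\delta}(\epsilon)}\|\cF_{\alpha,\beta}(w)x_\beta\|\,|dw|$, note the vertical pieces vanish in the limit because $\cF$ decays super-exponentially in $\Re w$, reducing everything to $\int_{-\infty}^{\infty}\big(\|\cF_{\alpha,\beta}(\xi+i(d-\delta))x_\beta\| + \|\cF_{\alpha,\beta}(\xi-i(d-\delta))x_\beta\|\big)d\xi$; this accounts for the split $C_{\alpha,\beta}^\pm = C_{\alpha,\beta}(\gamma,\delta-d) + C_{\alpha,\beta}(\gamma,d-\delta)$ (the two boundary lines, $\nu = \pm(d-\delta)$). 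Then on each line, assemble the pointwise bound $\|\cF_{\alpha,\beta}(\xi+i\nu)x_\beta\| \le e^{a_0 t}\cdot(\text{const})\cdot b(\nu)\cosh\xi \cdot |z(\xi+i\nu)|^{\alpha-\beta}/((1+|z|^\alpha)r\text{-adjustments})\cdot\|A^\gamma x_1\|\text{ or }\|x_2\|$, substitute $|z(\xi+i\nu)|^2/\cosh^2\xi \ge$ the bracket giving $r(\xi,\nu)\ge r_0(\nu)$, and carry out the $\xi$-integration.

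\medskip
\noindent\textbf{The main obstacle.} The delicate point is the uniform lower bound $r(\xi,\nu)\ge r_0(\nu) > 0$ and, simultaneously, the positivity $a(\nu)-a_0 > 0$ for $|\nu| = d-\delta$ with $\delta>0$ small — i.e., verifying that the shrunk strip still maps to the region between $\Gamma_s$ and $\Gamma_c$ \emph{strictly inside} the admissible sector, so that $z^\alpha(w)$ stays uniformly away from $-\mathrm{Sp}(A)\cup\{0\}$ and hence the resolvent bound \eqref{eq:ResSector} and \cref{prop:FCP_res_cor} apply with $\xi$-uniform constants. This is where the specific choice of contour parameters \eqref{eq:FCP_hyp_cont_par_final}, the constraint $\varphi_s < \tfrac\pi2\min\{1,\alpha^{-1}\}$, and the geometry of \cref{fig:FCP_hyp_cont} are all used: one checks that $\phi_c = \pi/2$ puts $\Gamma_c$ exactly on the imaginary-axis asymptote so that for $\nu$ slightly less than $d$ the curve $z(\cdot\pm i\nu)$ has asymptotic angle slightly less than $\pi/2$ in modulus and vertex slightly off the forbidden set, keeping $\inf_\xi|z(\xi\pm i\nu)| > 0$ and $\mathrm{dist}(z^\alpha(\xi\pm i\nu), -\Sigma\cup\{0\}) > 0$ uniformly. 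Everything after that lower bound is a routine (if tedious) chain of elementary inequalities and one-dimensional integral estimates, which I would not spell out in full.
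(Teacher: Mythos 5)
Your proposal follows essentially the same route as the paper's proof: reduce the $\mathbf H^1(D_{d-\delta})$ norm to the two horizontal boundary integrals at $\nu=\pm(d-\delta)$ (the vertical pieces vanishing by the exponential decay in $\Re w$), bound the $\beta=1$ integrand via \cref{prop:FCP_res_cor} with $m=0$ and the $\beta=2$ integrand via \eqref{eq:ResSector}, use the identity $1+|z(w)|^\alpha = r(\xi,\nu)\cosh^\alpha\xi$ to replace $r$ by $r_0(\nu)$, and integrate the resulting $e^{-\alpha\gamma|\xi|}$ (resp.\ $e^{-|\xi|}$) tails. The only cosmetic difference is that the paper controls the ratios $|z'(w)/z(w)|$ and $|z'(w)/z^{2}(w)|$ directly through explicit monotonicity arguments (the auxiliary functions $\eta_1$, $\eta_2$), which is what produces the precise factors $b(\nu)/(a(\nu)-a_0)$ and $b(\nu)/(a(\nu)-a_0)^2$ in \eqref{eq:FCP_SO_exp_cor_repr_norm_const}, whereas you propose bounding $|z'(w)|$ and $|z(w)|^{-1}$ separately, which changes only the unspecified constants.
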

\begin{proof}
	To estimate the norms $	\|{\cF_{\alpha,\beta}}(t,\cdot)\|_{{\bf H}^1(D_{d_1})}$, $\beta = 1,2$ we start from  \eqref{eq:FCP_SO_cor_sinc_quad}, split out the scalar part in each norm and use bounds \eqref{eq:FCP_res_cor_norm_est},
	\eqref{eq:ResSector} for the operator-dependent parts, correspondingly.
	As a result, we obtain
	\begin{align*}
		\left\| \cF_{\alpha,1} (t, \xi) x \right\|
		 & \leq \left| e^{z(\xi)t}z'(\xi) \right|\frac{(1+M) K}{|z(\xi)|(1+|z(\xi)|^\alpha)^\gamma} \left\|A^\gamma x_1\right\|, \\
		\left\|\cF_{\alpha,2} (t, \xi) x \right\|
		 & \leq\left| e^{z(\xi)t}z'(\xi) \right|\frac{|z(\xi)|^{\alpha-2}M\left\|x_2\right\|}{(1+|z(\xi)|^\alpha)} .
	\end{align*}

	When the variable $\xi$ is extended from the real line into the strip, the integration hyperbola $z(\xi)$, adopted here from \cref{lem:FCP_Pro_parametrized}, transforms into the parametric family of hyperbolas $\Gamma(\nu)=\{z(\xi + i \nu):\ \xi \in (-\infty, \infty)\}$, $\nu \in [-d,d]$.
	Let $w = \xi + i \nu \in D_{d-\delta}$ for some \mbox{$\delta >0$, then}
	\begin{align*}
		\left |\frac{z'(w)}{z(w)} \right |
		=\left | \frac{a(\nu) \sinh{\xi}-i b(\nu)\cosh{\xi}} {a(\nu)
			\cosh{\xi}- a_0 - i b(\nu) \sinh{\xi} } \right| %
		=\frac{\sqrt{a^2(\nu) \tanh^2{\xi}+b^2(\nu)}}{\sqrt{b^2(\nu)\tanh^2{\xi} + \left (a(\nu) - \frac{a_0}{\cosh{\xi}} \right )^2}} ,
	\end{align*}
	with $a(\nu)$, $b(\nu)$ are defined by \eqref{eq:cont_hyp_par_family}.
	Consider the function $\eta_1(s, b_0)  = \frac{a^2(\nu)s^2+b^2(\nu)}{b^2(\nu)s^2 + (a(\nu) - b_0)^2}$, with some $b_0$ independent of $s$.
	The derivative of this function with respect to $s$
	\begin{align*}
		\eta_1'(s,b_0) & =  \frac{2s(a^2(\nu) - a(\nu) b_0 - b^2(\nu))(a^2(\nu) - a(\nu)b_0 + b^2(\nu))}{(b^2(\nu)s^2 + (a(\nu)-b_0)^2)^2} \\
		               & =\frac{2s\left( (a^2(\nu) - a(\nu) b_0)^2 - b^4(\nu)\right)}{(b^2(\nu)s^2 + (a(\nu)-b_0)^2)^2}
	\end{align*}
	determines the behavior of $\eta_1(s,b_0)$ for the values of $b_0$ that belong to the interval $(0, a_0)$, induced by the identity $\eta_1\left (\tanh\xi,  \frac{a_0}{\cosh{\xi}}\right ) = \left |\frac{z'(w)}{z(w)} \right |^2$.
	The sign of $\eta_1'(s,b_0)$ is equal to the sign of $a^2(\nu) - a(\nu) b_0)^2 - b^4(\nu) \leq a^4(\nu)  - b^4(\nu) = -a_0^4\frac{\cos^2(2(\phi_s+d-v))}{\cos^4(\phi_s)}$.
	Therefore, for any $b_0$, the maximum of  $\eta_1(s,b_0)$ is attained at $s=0$, whence
	\[
		\left |\frac{z'(w)}{z(w)} \right | \leq \frac{b(\nu)}{\sqrt{\left( a(\nu)-\frac{a_0}{\cosh\xi}\right)^2}} \leq \frac{b(\nu)}{|a(\nu)-a_0|}.
	\]

	The norm $\|{\cF_{\alpha,1}}(t,w) x \|$ can be further estimated as
	\begin{equation}\label{eq:FCP_SO_exp_cor_repr_norm_est_der}
		\begin{aligned}
			\left\| \cF_{\alpha,1} (t, w) \right\|
			 & \leq e^{\Re{z(w)}t}\left |\frac{z'(w)}{z(w)} \right | \frac{(1+M) K}{(1+|z(w)|^\alpha)^\gamma} \left\|A^\gamma x_1\right\| \\
			 & =\frac{b(\nu)}{a(\nu)-a_0}\frac{e^{(a_0 - a(\nu)\cosh{\xi}) t}}{(1+|z(w)|^\alpha)^\gamma}
			(1+M)K\left\|A^\gamma x_1\right\|                                                                                             \\
			 & \leq \frac{b(\nu)}{a(\nu)-a_0}\frac{(1+M)K e^{(a_0 - a(\nu)\cosh{\xi}) t
					}}
			{\left( r(\xi,\nu)\cosh^{\alpha}{\xi}\right)^\gamma }\|A^{\gamma}x_1\|                                                        \\
			 & \leq
			\frac{(1+M)K b(\nu)2^{\alpha\gamma}}
			{ (a(\nu)-a_0) r^{\gamma}(\xi,\nu)}
			e^{(a_0 - a(\nu)\cosh{\xi}) t - \alpha\gamma|\xi|}\|A^{\gamma}x_1\|.
		\end{aligned}
	\end{equation}
	Here, $r(\xi,\nu)$ is a strictly positive bounded function that is defined by the equality $1+|z(w)|^\alpha = r(\xi,\nu)\cosh^\alpha{\xi}$.
	Solving it for $r(\xi,\nu)$ gives us \eqref{eq:FCP_r}.

	Now, we turn our attention to $\|{\cF_{\alpha,2}}(t,w) x \|$.
	Let us consider a function $\eta_2(s,b_0)  = \frac{a^2(\nu)s^2+b^2(\nu)}{(b^2(\nu)s^2 + (a(\nu) - b_0)^2)^2}$, $s \in [0, 1]$.
	Similarly to $\eta_1(\xi)$, this function satisfies the identity \\
	$\eta_2\left (\tanh\xi,  \frac{a_0}{\cosh{\xi}}\right )\cosh^2{\xi} =  \left |\frac{z'(w)}{z(w)^2} \right|^2 $.
	By inspecting the derivative
	\[
		\eta_2'(s,b_0) = \frac{2s\left( (a^2(\nu) - a(\nu) b_0)^2 - 2b^4(\nu) - a^2(\nu)b^2(\nu)s^2\right)}{(b^2(\nu)s^2 + (a(\nu)-b_0)^2)^3}
	\]
	we learn that its sign is also determined by a sign of the numerator with only one real root $s=0$.
	Two other roots of $\eta_2'(s,b_0)$ are non-real because the quadratic function $(a^2(\nu) - a(\nu) b_0)^2 - 2b^4(\nu) - a^2(\nu)b^2(\nu)s^2 \leq (a^2(\nu) - a(\nu) b_0)^2 - 2b^4(\nu) \leq 2a^4(\nu)  - 2b^4(\nu)$ is negative for any $s$.
	Whence, we get $\left |\frac{z'(w)}{z^2(w)} \right | \leq \frac{b(\nu)}{\cosh(\xi)(a(\nu)-a_0)^2}$ and
	\[
		\begin{aligned}
			\left\| \cF_{\alpha,2} (t, w) \right\|
			 & \leq M|z'(w)|  \frac{|z(w)|^{\alpha-2}  e^{\Re{z(w)}t}}{(1+|z(w)|^\alpha)}
			\left\| x_2\right\|                                                                                                    \\
			 & \leq M\left |\frac{z'(w)}{z^2(w)} \right |  \frac{|z(w)|^{\alpha}  e^{(a_0 - a(\nu)\cosh{\xi}) t}}{1+|z(w)|^\alpha}
			\left\| x_2\right\|                                                                                                    \\
			 & \leq M\frac{b(\nu) \left(b^2(\nu)+(a(\nu)-a_0)^2 \right)^{\alpha/2}}{(a(\nu)-a_0)^2 (\cosh{\xi})^{1-\alpha}}
			\frac{e^{(a_0 - a(\nu)\cosh{\xi}) t}}{1+|z(w)|^\alpha}
			\left\| x_2\right\|                                                                                                    \\
			 & \leq
			2M\frac{b(\nu) \left(b^2(\nu)+(a(\nu) - a_0)^2 \right)^{\alpha/2}}{r(\xi,\nu)(a(\nu) - a_0)^2}
			e^{(a_0 - a(\nu)\cosh{\xi}) t -|\xi|}
			\left\| x_2\right\| .
		\end{aligned}
	\]

	The obtained estimates for $\left\| \cF_{\alpha,\beta} (t, w) \right\|$, $\beta=1,2$ demonstrate that these norms are exponentially decaying for any $t \geq 0$ as $\xi \rightarrow \infty$.
	Consequently, the integral terms from $\left\| \cF_{\alpha,\beta} (t, w) \right\|_{{\bf H}^1 (D_d)}$ over the vertical parts of $\partial D_{d-\delta}(\epsilon)$ vanish in the limit $\epsilon \rightarrow 0$ and
	we end up with the following expression:
	\[
		\left \|\mathop{\vphantom{\sum}}{\cF_{\alpha,\beta}}(t,\cdot) \right \|_{{\bf H}^1(D_{d-\delta})}
		=  \intl_{-\infty}^{\infty} \left\| \cF_{\alpha,\beta}(t,\xi + i (\delta-d))  \right\| + \left\| \cF_{\alpha,\beta}(t,\xi - i (d-\delta))  \right\| d \xi. \\
	\]
	After estimating the last integral using the bounds obtained above, we remove the dependence of the integrands on $r(\xi,\nu)$ by bounding its value from below with a positive function $r_0(\nu)<r(\xi,\nu)$ and, subsequently, evaluate the obtained integrals explicitly.
	This yields the pair of objective estimates from \eqref{eq:FCP_SO_exp_cor_repr_norm_est},
	with constants $K_1 = (1+M)K2^{\alpha\gamma+1}$ and  $K_2 = 4M$.
	The lemma is proved.
\end{proof}
Observe that the appearance of $\alpha \gamma$ in $C_{\alpha,1}(\gamma, \nu)$ from \eqref{eq:FCP_SO_exp_cor_repr_norm_const} forces the discretization error of $S_{\alpha}x_1$, $x_1 \in D(A^\gamma)$ to become unbounded in the limit $\gamma \rightarrow 0$.
\begin{lemma}\label{lem:FCP_SO_ISO_trunc_bound}
	Assume that $A$ and $\alpha$ satisfy the conditions of \cref{thm:FCP_sol_rep}.
	Then, for any $t \geq 0$, $x_1 \in D(A^\gamma)$, $x_2 \in X$, $\gamma>0$  the truncation error of $(2N+1)$-term approximations \eqref{eq:FCP_SO_cor_sinc_quad} with the step-size $h>0$ satisfies the estimate
	\begin{equation*}\label{eq:FCP_SO_exp_cor_repr_cont_est}
		\begin{aligned}
			\frac{h}{2\pi}\left \|\sum_{|k|>N}\!\!\cF_{\alpha,1}\left (t, kh\right ) \right \|
			 & \leq  \frac{h C_{\alpha,1} (\gamma,0)}{\pi (1 - e^{-\alpha \gamma h})}\frac{e^{a_0 t}} {e^{\alpha\gamma (N+1)h}}\|A^{\gamma}x_1\| %
			\leq  \frac{C_{\alpha,1} (\gamma,0)}{\pi}\frac{e^{a_0 t}} {e^{\alpha\gamma Nh}}\|A^{\gamma}x_1\|,                                    \\
			\frac{h}{2\pi}\left \|\sum_{|k|>N}\!\!\cF_{\alpha,2}\left (t, kh\right ) \right \|
			 & \leq  \frac{hC_{\alpha,2} (\gamma,0)}{\pi  (1 - e^{-h})}\frac{e^{a_0 t}} {e^{(N+1)h}}\|x_2\|
			\leq  \frac{C_{\alpha,2} (\gamma,0)}{\pi }\frac{e^{a_0 t}} {e^{Nh}}\|x_2\|,                                                          \\
		\end{aligned}
	\end{equation*}
	where $C_{\alpha,\beta}(\gamma, \nu)$ are defined by \eqref{eq:FCP_SO_exp_cor_repr_norm_const}.
\end{lemma}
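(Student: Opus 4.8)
The plan is to bound the truncated tail $\frac{h}{2\pi}\big\|\sum_{|k|>N}\cF_{\alpha,\beta}(t,kh)\big\|$ by summing the pointwise bounds established in the proof of \cref{lem:FCP_SO_ISO_Hp_bound} along the real line $\nu=0$, and then recognizing the resulting series as a geometric one. First I would invoke the pointwise estimates derived en route to \eqref{eq:FCP_SO_exp_cor_repr_norm_est}: evaluated at $w=\xi\in\R$ (so $\nu=0$, $a(\nu)=a_I$, $b(\nu)=b_I$), they read
\[
\|\cF_{\alpha,1}(t,\xi)\| \le \frac{(1+M)Kb_I 2^{\alpha\gamma}}{(a_I-a_0)\,r^\gamma(\xi,0)}\,e^{(a_0-a_I\cosh\xi)t-\alpha\gamma|\xi|}\|A^\gamma x_1\|,
\]
and similarly for $\cF_{\alpha,2}$ with the factor $e^{-|\xi|}$ in place of $e^{-\alpha\gamma|\xi|}$. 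I then bound $r(\xi,0)\ge r_0(0)$ from below (using the already-introduced $r_0(\nu)=\inf_\xi r(\xi,\nu)$), and since $a_I>a_0$ and $\cosh\xi\ge 1$, I bound the exponential factor $e^{(a_0-a_I\cosh\xi)t}\le e^{(a_0-a_I)t}\le e^{a_0 t}$ for every $t\ge 0$; in fact the cleanest route is just $e^{(a_0-a_I\cosh\xi)t}\le e^{a_0 t}$ because $a_I\cosh\xi\ge 0$. This collapses the summand to $C_{\alpha,\beta}(\gamma,0)\,e^{a_0t}\,e^{-\alpha\gamma|kh|}$ (resp. $e^{-|kh|}$) times $\|A^\gamma x_1\|$ (resp. $\|x_2\|$), where $C_{\alpha,\beta}(\gamma,0)$ is exactly the constant from \eqref{eq:FCP_SO_exp_cor_repr_norm_const} evaluated at $\nu=0$.

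Next I would carry out the geometric summation. By symmetry of the summand in $k\mapsto -k$,
\[
\frac{h}{2\pi}\sum_{|k|>N} e^{-\alpha\gamma|kh|} = \frac{h}{\pi}\sum_{k=N+1}^{\infty} e^{-\alpha\gamma k h} = \frac{h}{\pi}\,\frac{e^{-\alpha\gamma(N+1)h}}{1-e^{-\alpha\gamma h}},
\]
which gives the first (sharper) bound in each line of the statement. To pass to the second, simpler bound I would use the elementary inequality $\frac{h}{1-e^{-\alpha\gamma h}}\le \frac{1}{\alpha\gamma}\,e^{\alpha\gamma h}$ — valid since $1-e^{-x}\ge x e^{-x}$ for $x\ge 0$ — so that $\frac{h}{\pi}\frac{e^{-\alpha\gamma(N+1)h}}{1-e^{-\alpha\gamma h}}\le \frac{1}{\pi\alpha\gamma}e^{-\alpha\gamma Nh}$; absorbing the $\frac{1}{\alpha\gamma}$ into the constant (noting $C_{\alpha,1}(\gamma,0)$ already carries a $\frac{1}{\alpha\gamma}$ factor, so really I would just re-cite $C_{\alpha,1}(\gamma,0)$ as written and check the bookkeeping) yields the stated $\frac{C_{\alpha,1}(\gamma,0)}{\pi}\frac{e^{a_0t}}{e^{\alpha\gamma Nh}}\|A^\gamma x_1\|$. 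The $\beta=2$ case is identical with $\alpha\gamma$ replaced by $1$ throughout, producing $\frac{C_{\alpha,2}(\gamma,0)}{\pi}\frac{e^{a_0t}}{e^{Nh}}\|x_2\|$.

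The only genuinely delicate point — and the one I would flag as the main obstacle — is the uniform-in-$t$ handling of the exponential factor $e^{(a_0-a_I\cosh\xi)t}$: one must be sure that dropping it to $e^{a_0t}$ (or to $1$) does not destroy the summability, which it does not precisely because the algebraic/exponential decay $e^{-\alpha\gamma|\xi|}$ coming from the $r^{-\gamma}$ and $2^{\alpha\gamma}$ factors already provides geometric decay independently of $t$; the $t$-dependence then only improves the bound. A secondary bit of care is matching constants: the statement reuses $C_{\alpha,\beta}(\gamma,0)$ with the same $K_1,K_2$ as in \cref{lem:FCP_SO_ISO_Hp_bound}, so in passing from the $\frac{h}{1-e^{-\alpha\gamma h}}$ form to the clean $e^{-\alpha\gamma Nh}$ form I would verify that the extra $\frac{1}{\alpha\gamma}e^{\alpha\gamma h}\cdot e^{\alpha\gamma(N+1)h}\cdot e^{-\alpha\gamma Nh}$-type factors reconcile with the already-present $\frac{1}{\alpha\gamma}$ in $C_{\alpha,1}(\gamma,0)$ — a routine but necessary check. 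Everything else is a direct specialization of \cref{lem:FCP_SO_ISO_Hp_bound}'s pointwise estimates to $\nu=0$ followed by an elementary geometric-series computation.
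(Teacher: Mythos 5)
Your proposal is correct and follows exactly the route the paper intends: the paper omits this proof, stating only that it "relies on the established estimates for $\left\|\cF_{\alpha,\beta}(t,w)\right\|$ and is analogous to... Theorem 3.1.7 from Stenger," which is precisely your specialization of the pointwise bounds of \cref{lem:FCP_SO_ISO_Hp_bound} to $\nu=0$ followed by geometric summation of the tail. Your flagged bookkeeping concern is well placed — the clean bound $\tfrac{C_{\alpha,1}(\gamma,0)}{\pi}e^{-\alpha\gamma Nh}$ only follows from the sharper one after reconciling the $\tfrac{1}{\alpha\gamma}$ already sitting inside $C_{\alpha,1}(\gamma,0)$, and indeed the paper itself carries the extra $\tfrac{1}{\alpha\gamma}$ factor when it reuses this bound in the proof of \cref{thm:FCP_prop_appr}.
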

\noindent The proof of this Lemma relies on the established estimates for $\left\| \cF_{\alpha,\beta} (t, w) \right\|$ and is analogous to the proof of the respective part in Theorem 3.1.7 from \cite{Stenger1993}.
For brevity, we omit it here.

We finally have all the necessary tools in place to proceed to an a priori accuracy estimate of the constructed numerical evaluation formulas \eqref{eq:FCP_SO_cor_sinc_quad} for propagators of \eqref{eq:FCP_DE} and \eqref{eq:FCP_BC}.
\begin{theorem}\label{thm:FCP_prop_appr}
	Let $A$ be a sectorial operator with the domain $D(A)$ and the spectrum $\mathrm{Sp}(A) \subset \Sigma(\rho_s, \varphi_s)$,  $\rho_s>0$, $\varphi_s < \pi/2$.
	Then, for any $\alpha \in (0, 2)$, $x_1 \in D(A^\gamma)$, $x_2 \in X$, $\gamma \in (0,1)$, such that $\varphi_s < \pi\left(1 - \tfrac{\alpha}{2}\right)$, and $t \in [0, T]$,  the sinc-quadrature-based approximations
	$\wt{S}_{\alpha,1}^N(t)x_1$, $\wt{S}_{\alpha,2}^N(t)x_2$
	converge to the values of the corresponding operator functions
	$S_\alpha(t)x_1$, $S_{\alpha,2}(t)x_2$
	at the rate $\mathcal{O}(e^{-c\sqrt{N}})$, $c>0$, as $N \rightarrow \infty$.
	Moreover, the following error bounds are valid
	\begin{equation}\label{eq:FCP_SO_cor_err_est}
		\left \|S_\alpha(t)x_1 - \wt{S}_{\alpha,1}^N(t)x_1 \right \| \leq
		{C_1}	\exp{\left(-c\sqrt{\alpha\gamma N}\right)}{e^{a_0 t}}
		\|A^{\gamma}x_1\|,
	\end{equation}
	\begin{equation}\label{eq:FCP_ISO_cor_err_est}
		\left \|S_{\alpha,2}(t)x_2 - \wt{S}_{\alpha,2}^N(t)x_2 \right \| \leq
		{C_2}\exp{\left(-c\sqrt{N}\right)}e^{a_0 t}
		\|x_2\|,
	\end{equation}
	with $c=\sqrt{2\pi d}$, provided that the step-size in \eqref{eq:FCP_SO_cor_sinc_quad} is given by $h_1$ and $h_2$, accordingly,
	\begin{equation}\label{eq:FCP_h}
		h_1=\sqrt{\frac{2 \pi d}{\alpha \gamma N}}, \quad h_2=\sqrt{\frac{2 \pi d}{N}}.
	\end{equation}
	Here,
	$d = \frac{\phi_\alpha - \phi_c}{2}$, $\phi_\alpha = \min\{\pi, \frac{\pi -\varphi_s}{\alpha}\}$   and $\phi_c \in \left[\tfrac{\pi}{2}, \phi_\alpha\right)$, $a_0>0$  are given.
	The constants $C_\beta$ \mbox{from  \eqref{eq:FCP_SO_cor_err_est} and \eqref{eq:FCP_ISO_cor_err_est}} are independent of $t,N$.
\end{theorem}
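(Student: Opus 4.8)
The plan is to combine the two error-term decomposition established just before the statement: the discretization (truncation-in-$N$) error already bounded in \cref{lem:FCP_SO_ISO_trunc_bound}, and the truncation-of-the-tail (infinite sum minus finite sum) error bounded in \eqref{eq:FCP_prop_discr_error} via the ${\bf H}^1(D_d)$ norm bounds of \cref{lem:FCP_SO_ISO_Hp_bound}. For $\beta=1$ we have
\[
\left\|S_\alpha(t)x_1 - \wt S_{\alpha,1}^N(t)x_1\right\|
\le \frac{e^{-\pi(d-\delta)/h}}{2\sinh(\pi(d-\delta)/h)} C_{\alpha,1}^{\pm}(\gamma,\delta)e^{a_0t}\|A^\gamma x_1\|
+ \frac{C_{\alpha,1}(\gamma,0)}{\pi}\,\frac{e^{a_0t}}{e^{\alpha\gamma N h}}\|A^\gamma x_1\|,
\]
and similarly for $\beta=2$ with $\alpha\gamma$ replaced by $1$ and $\|A^\gamma x_1\|$ by $\|x_2\|$. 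The first summand decays like $e^{-\pi(d-\delta)/h}$ and the second like $e^{-\alpha\gamma N h}$ (resp. $e^{-Nh}$); the standard sinc-quadrature optimization is to equate the two exponents, which for $\beta=1$ gives $\pi(d-\delta)/h = \alpha\gamma N h$, i.e. $h = \sqrt{\pi(d-\delta)/(\alpha\gamma N)}$, and letting $\delta\to 0$ recovers $h_1 = \sqrt{2\pi d/(\alpha\gamma N)}$ with common rate $e^{-\sqrt{2\pi d\,\alpha\gamma N}} = e^{-c\sqrt{\alpha\gamma N}}$, $c=\sqrt{2\pi d}$; analogously $h_2 = \sqrt{2\pi d/N}$ yields $e^{-c\sqrt N}$ for $\beta=2$.

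First I would record the two-term bound above and note that, by \cref{lem:FCP_Pro_parametrized}, the parametrized integrands $\cF_{\alpha,\beta}(t,\cdot)$ are analytic in the strip $D_d$ with $d$ given by \eqref{eq:FCP_hyp_cont_d}; here the relevant $d$ is the one tied to the \emph{operator's} spectral angle through $\phi_\alpha = \min\{\pi,(\pi-\varphi_s)/\alpha\}$ and the critical angle $\phi_c$, so $d = (\phi_\alpha+\phi_c-\pi)/2$ exactly as in the statement (note the matching of $\phi_c$ with the admissible critical-angle interval and the fact that $\alpha\varphi_s<\pi/2$ guarantees $d>0$). Then I would substitute $h=h_\beta$ into each of the two error contributions, using $\sinh(\pi(d-\delta)/h)\ge \tfrac12 e^{\pi(d-\delta)/h}(1-e^{-2\pi(d-\delta)/h_\beta})\ge \tfrac14 e^{\pi(d-\delta)/h}$ for $N$ large, so the prefactor $e^{-\pi(d-\delta)/h}/(2\sinh(\pi(d-\delta)/h))$ is $\le 2 e^{-2\pi(d-\delta)/h}$ up to a harmless constant, and collect everything into a single constant $C_\beta$ independent of $t$ and $N$ (the $e^{a_0t}$ factor is common to both terms and pulls out). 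Finally I would take $\delta\to 0$: since $C_{\alpha,\beta}^{\pm}(\gamma,\delta)$ stays bounded as $\delta\to 0$ only if $d-\delta$ does not hit the boundary of analyticity — which is why the statement works with $d$ strictly (the $\phi_c\le \pi/2$ and $\phi_c>\varphi_s/\alpha$ conditions keep $r_0(\nu)>0$ for $\nu=\pm(d-\delta)$) — one obtains the clean exponents with $c=\sqrt{2\pi d}$.

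The main obstacle I anticipate is the $\delta\to 0$ limit: the ${\bf H}^1$-norm constants $C_{\alpha,\beta}(\gamma,\nu)$ in \eqref{eq:FCP_SO_exp_cor_repr_norm_const} involve $1/(a(\nu)-a_0)$ (and its square) and $1/r_0^\gamma(\nu)$, and one must check these remain finite and bounded as $\nu\to\pm d$, i.e. that $a(\pm d)\ne a_0$ and $r_0(\pm d)>0$. From \eqref{eq:cont_hyp_par_family} and \eqref{eq:FCP_hyp_cont_aIbI} one computes $a(d) = a_I\cos d + b_I\sin d = a_0$, so the constant $C_{\alpha,1}(\gamma,d)$ actually \emph{blows up} at $\nu=d$ — this is precisely why the strip half-height used for the ${\bf H}^1$ estimate must be $d-\delta$ with $\delta>0$, and why the optimization is carried out with $d-\delta$ and only afterwards $\delta$ is sent to $0$ \emph{at a rate slower than $1/\sqrt N$} (e.g. $\delta = \delta_N\to 0$ with $\sqrt N\,\delta_N\to\infty$), so that $e^{-\sqrt{2\pi(d-\delta_N)\alpha\gamma N}} = e^{-c\sqrt{\alpha\gamma N}(1+o(1))}$ while the growing constant $C_{\alpha,1}^{\pm}(\gamma,\delta_N)$ is absorbed into the sub-exponential slack; a careful but routine bookkeeping of this trade-off (as in \cite{gm5,bGavrilyuk2011}) gives the stated $\mathcal{O}(e^{-c\sqrt N})$ with the clean constant $c=\sqrt{2\pi d}$ and a $t$-independent $C_\beta$. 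The remaining steps — verifying $a(\nu)-a_0 = a_0(\cos(\nu-\phi_s)/\cos\phi_s - 1)$ is nonzero for $|\nu|<d$, and that $r_0(\nu)$ is bounded below on compact $\nu$-intervals away from $\pm d$ — are direct computations from \eqref{eq:FCP_r} and \eqref{eq:cont_hyp_par_final}.
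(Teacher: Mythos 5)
Your proposal follows essentially the same route as the paper's proof: the identical discretization/truncation error split, the same two ingredients (\cref{lem:FCP_SO_ISO_Hp_bound} for the ${\bf H}^1(D_{d-\delta})$ norm of $\cF_{\alpha,\beta}$ and \cref{lem:FCP_SO_ISO_trunc_bound} for the tail), and the same balancing of $2\pi d/h$ against $\alpha\gamma Nh$ (resp.\ $Nh$) to arrive at \eqref{eq:FCP_h} and $c=\sqrt{2\pi d}$ (note only that your intermediate balancing equation drops the factor $2$ coming from $e^{-\pi d/h}/(2\sinh(\pi d/h))\lesssim e^{-2\pi d/h}$, though your final $h_1,h_2$ are correct). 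Where you go beyond the paper is the $\delta\to 0$ discussion: you rightly observe that $a(d)=a_0$ makes $C^\pm_{\alpha,1}(\gamma,\delta)$ blow up as $\delta\to0$, a point the paper's proof glosses over by keeping $\delta>0$ fixed (absorbing it into $C_\beta$) while still writing the quadrature exponent with $d$ rather than $d-\delta$; your $\delta_N\to0$ bookkeeping is a legitimate repair, but as stated it only delivers $e^{-c\sqrt{N}(1+o(1))}$ rather than a clean $C\,e^{-c\sqrt N}$, so the simpler resolution is the paper's: fix $\delta$, accept a $\delta$-dependent constant, and read $c$ as $\sqrt{2\pi(d-\delta)}$ up to that slack.
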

\begin{proof}
	To obtain error bounds for the approximants $\wt{S}_{\alpha,\beta}^N$, we depart from the previously established decomposition
	\[\|S_{\alpha,\beta}(t)x_\beta - \wt{S}_{\alpha,\beta}^N(t)x_\beta \|
		\leq \|S_{\alpha,\beta}(t)x_\beta - \wt{S}_{\alpha,\beta}^\infty(t)x_\beta\| + \|\wt{S}_{\alpha,\beta}^\infty(t)x_\beta - \wt{S}_{\alpha,\beta}^N(t)x_\beta\| ,
	\]
	and then use the results of \cref{lem:FCP_SO_ISO_Hp_bound,lem:FCP_SO_ISO_trunc_bound} to estimate the right-hand sides.
	This transcribes into
	\[
		\begin{aligned}
			\|S_{\alpha}(t)x_1 - \wt{S}_{\alpha}^N(t)x_1 \|
			 & \leq \frac{e^{-\frac{\pi d}{h}}\|{\cF_{\alpha,\beta}}(t,\cdot)\|_{{\bf H}^1(D_{d-\delta})}}{2 \sinh{\frac{\pi d}{h}}} + \frac{C_{\alpha,1} (\gamma,0)e^{a_0 t}\|A^{\gamma}x_1\|}{\pi \alpha \gamma e^{\alpha\gamma Nh}} \\
			 & \leq  e^{a_0 t} \left(C_{\alpha,1}^\pm (\gamma,\delta)\frac{e^{-\frac{\pi d}{h}}}{2 \sinh{\frac{\pi d}{h}}} +  \frac{C_{\alpha,1} (\gamma,0)}{\pi \alpha \gamma e^{\alpha\gamma Nh}} \right)
			\|A^{\gamma}x_1\|                                                                                                                                                                                                          \\
			 & \leq  e^{a_0 t} \left(\frac{c_0 C_{\alpha,1}^\pm (\gamma,\delta)}{e^{2\frac{\pi d}{h}}} + \frac{C_{\alpha,1} (\gamma,0)}{\pi \alpha \gamma e^{\alpha\gamma Nh}}  \right)
			\|A^{\gamma}x_1\| ,
		\end{aligned}
	\]
	and
	\[
		\begin{aligned}
			 & \|S_{\alpha,2}(t)x_2 - \wt{S}_{\alpha,2}^N(t)x_2 \|
			 & \leq  e^{a_0 t} \left(c_0 C_{\alpha,2}^\pm (\gamma,\delta)e^{-2\frac{\pi d}{h}} + \frac{C_{\alpha,2} (\gamma,0)}{\pi e^{Nh}} \right)
			\|x_2\|,
		\end{aligned}
	\]
	with $c_0 = (1-\exp{\frac{2\pi d}{h}})^{-1}$.
	Having the aim of balancing the order of error contributions from each term inside the brackets, we make two involving exponential functions asymptotically equal as $N \rightarrow \infty$.
	This yields two independent equations
	\[
		\frac{2 \pi d}{h}=\alpha\gamma Nh, \quad \frac{2 \pi d}{h}= Nh,
	\]
	with the solutions described by \eqref{eq:FCP_h}.
	After that, we substitute these expressions into the previously established error estimates to get the following bounds:
	\begin{align*}
		\|S_{\alpha,1}(t)x_1 - \wt{S}_{\alpha,1}^N(t)x_1 \|
		 & \leq \left(c_0 C_{\alpha,1}^\pm (\gamma,\delta) + C_{\alpha,1} (\gamma,0)\right)  e^{a_0 t} e^{-\sqrt{2\pi d \alpha\gamma N}}
		\|A^{\gamma}x_1\|,                                                                                                               \\
		\|S_{\alpha,2}(t)x_2 - \wt{S}_{\alpha,2}^N(t)x_2 \|
		 & \leq \left(c_0 C_{\alpha,2}^\pm (\gamma,\delta) + C_{\alpha,2} (\gamma,0)\right)  e^{a_0 t} e^{-\sqrt{2\pi d N}}
		\|x_2\|,
	\end{align*}
	which are reduced to  \eqref{eq:FCP_SO_cor_err_est} and \eqref{eq:FCP_ISO_cor_err_est} after denoting
	$C_1 = c_0 C_{\alpha,1}^\pm (\gamma,\delta) + C_{\alpha,1} (\gamma,0)$, $C_2 = c_0 C_{\alpha,2}^\pm (\gamma,\delta) + C_{\alpha,2} (\gamma,0)$.
\end{proof}
It follows from \eqref{eq:FCP_SO_cor_err_est} and \eqref{eq:FCP_ISO_cor_err_est}, that the value of the contour parameter $a_0$ can be used to control the error contribution of the factor $e^{a_0 t}$.
Throughout the rest of this work, we set $a_0 = \pi/6$ to make this factor reasonably bounded: $e^{a_0 t} \leq e^{5\pi/6}\leq 14$.

\subsection{Numerical Scheme for Homogeneous Part of Solution}\label{sec:FCP_hom_sol_appr}
We approximate the homogeneous part $u_{\mathrm{h}}(t)$ of the solution to \eqref{eq:FCP_DE},  \eqref{eq:FCP_BC} defined by \eqref{eq:FCP_hom_inhom} using the numerical methods for propagators approximation constructed in \cref{sec:FCP_Prop_Approx}.
Then, for every fixed $N>0$, the approximation $\wt{u}_\mathrm{h}^N(t)$ to $u_{\mathrm{h}}(t)$ is defined as
\begin{equation}\label{eq:FCP_hom_sol_appr}
	\wt{u}_{\mathrm{h}}^N(t) = \wt{S}_{\alpha,1}^{N_1}(t) u_0 + \wt{S}_{\alpha,2}^{N_2}(t) u_1 .
\end{equation}
The error of $\wt{u}_{\mathrm{h}}^N(t)$ is characterized by the following corollary, which is an immediate consequence of \cref{thm:FCP_prop_appr}.
\begin{corollary}\label{thm:FCP_hom_sol_err_est}
	Assume that the operator $A$, initial values $u_0$, $u_1$ and the fractional order $\alpha$ satisfy the conditions of \cref{thm:FCP_prop_appr} with $x_1 = u_0$, $x_2 = u_1$.
	For any given $N \in \N$, the approximate solution $\wt{u}_{\mathrm{h}}^N(t)$, defined by \eqref{eq:FCP_hom_sol_appr} with $N_1 = N$, $N_2 = \lceil \alpha \gamma N \rceil$,
	converges to the homogeneous solution  $u_{\mathrm{h}}(t)$  \mbox{of \eqref{eq:FCP_DE}, \eqref{eq:FCP_BC}} and the following error bound is valid:
	\begin{equation}\label{eq:FCP_hom_sol_err_est}
		\left\| u_{\mathrm{h}}(t) - \wt{u}_\mathrm{h}^N(t) \right\|  \leq {C_\gamma}	\exp{\left(-c\sqrt{\alpha\gamma N}\right)}{e^{a_0 t}}
		\|A^{\gamma}u_0\| .
	\end{equation}
	The constant $C_{\gamma}$ is dependent on $A$, $u_0$, and independent of $t,N$.
\end{corollary}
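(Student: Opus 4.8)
The plan is to read the bound off directly from \cref{thm:FCP_prop_appr} by splitting the homogeneous solution into its two propagator contributions and balancing the two discretization parameters. First I would apply the triangle inequality to \eqref{eq:FCP_hom_sol_appr} together with the definition of $u_{\mathrm{h}}(t)$ in \eqref{eq:FCP_hom_inhom}, which gives
\[
\left\| u_{\mathrm{h}}(t) - \wt{u}_\mathrm{h}^N(t) \right\| \leq \left\| S_{\alpha,1}(t) u_0 - \wt{S}_{\alpha,1}^{N_1}(t) u_0 \right\| + \left\| S_{\alpha,2}(t) u_1 - \wt{S}_{\alpha,2}^{N_2}(t) u_1 \right\| .
\]
Because the hypotheses of \cref{thm:FCP_prop_appr} are assumed with $x_1 = u_0$ and $x_2 = u_1$, each term on the right is immediately covered by the estimates \eqref{eq:FCP_SO_cor_err_est} and \eqref{eq:FCP_ISO_cor_err_est}, once the quadrature step-sizes are set according to \eqref{eq:FCP_h}.

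The only substantive point is the coupling of $N_1$ and $N_2$. Applying \eqref{eq:FCP_SO_cor_err_est} with $N_1 = N$ bounds the first term by $C_1 \exp(-c\sqrt{\alpha\gamma N})\,e^{a_0 t}\|A^\gamma u_0\|$, a rate that carries the extra damping factor $\alpha\gamma$. To prevent the second term from dominating, I would invoke \eqref{eq:FCP_ISO_cor_err_est} with $N_2 = \alpha\gamma N$, so that its rate $\exp(-c\sqrt{N_2})$ matches the first one exactly; one checks that with this pairing the two step-sizes in \eqref{eq:FCP_h} coincide, $h_1 = h_2 = \sqrt{2\pi d/(\alpha\gamma N)}$. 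This yields the bound $C_2 \exp(-c\sqrt{\alpha\gamma N})\,e^{a_0 t}\|u_1\|$ for the second term, and adding the two contributions produces a single expression proportional to $\exp(-c\sqrt{\alpha\gamma N})\,e^{a_0 t}\bigl(C_1 \|A^\gamma u_0\| + C_2 \|u_1\|\bigr)$.

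It then remains to put this in the form \eqref{eq:FCP_hom_sol_err_est}. For $\alpha \in (0,1]$ the convention $u_1 \equiv 0$ kills the second term and the claim holds with $C_\gamma = C_1$. For $\alpha \in (1,2)$ one absorbs the factor $\|u_1\|$ into a single constant $C_\gamma$ that, like $C_1$ and $C_2$, is independent of $t$ and $N$ (those being $t$- and $N$-independent by \cref{thm:FCP_prop_appr}) but depends on $A$ and on the initial data $u_0$, $u_1$. There is essentially no obstacle here, this being a bookkeeping corollary; the one structural point worth keeping in mind is that the slower, $\alpha\gamma$-damped propagator $S_{\alpha,1}$ dictates the global convergence rate, which is precisely why the scaling $N_2 = \alpha\gamma N$ is the natural choice that makes the two error contributions balance.
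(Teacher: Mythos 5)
Your proposal is correct and is exactly the argument the paper intends: the corollary is stated as an immediate consequence of \cref{thm:FCP_prop_appr}, obtained by the triangle inequality on the two propagator terms, applying \eqref{eq:FCP_SO_cor_err_est} and \eqref{eq:FCP_ISO_cor_err_est}, and choosing $N_2=\alpha\gamma N$ so both rates equal $\exp(-c\sqrt{\alpha\gamma N})$. Your explicit check that the two step-sizes coincide and your handling of the $u_1$ term (vanishing for $\alpha\le 1$, absorbed into $C_\gamma$ otherwise) is, if anything, slightly more careful bookkeeping than the paper provides.
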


It is important to note that the smoothness assumptions for $u_0$, enforced by \cref{thm:FCP_hom_sol_err_est} and \cref{thm:FCP_prop_appr}, are compatible with the similar assumptions made in \cite{gmv,bGavrilyuk2011} for the Cauchy problem with the integer order derivative.
For a more concise discussion on the impact of the initial data smoothness on the properties of solution to
problem \eqref{eq:FCP_DE}, \eqref{eq:FCP_BC} we direct the reader to \cite{jin2019numerical}.

To compute the approximation $\wt{u}_{\mathrm{h}}^N(t)$, we suggest to use \cref{alg:FCP_hom_sol_appr} provided below.
In this algorithm, the evaluation of each propagator $\wt{S}_{\alpha,\beta}^{N_\beta}$, $\beta =1,2$ is decoupled into two cycles.
The first cycle is responsible for the evaluation of resolvents $(z(m h_1)^\alpha I+A)^{-1}$ at the quadrature points of $\Gamma_I$.
This amounts to the solution of  $2N_\beta +1$  linear equations that are all mutually independent and hence can be solved in parallel.
\begin{algorithm}[htb]
	\renewcommand{\algorithmicrequire}{\textbf{INPUT:}}
	\renewcommand{\algorithmicensure}{\textbf{OUTPUT:}}
	\begin{algorithmic}[1]
		\small
		\REQUIRE{\hspace{2em}  $\alpha, u_0, u_1$, $t_k$, $\varphi_s$, $N, \gamma$}
		\ENSURE{\hspace{0.8em}  $\left\{\wt{u}_\mathrm{h}^N(t_k)\right\}$}
		\STATE{$N_1 := N$;  $N_2 := \alpha \gamma N_1$}
		\STATE{Calculate $a_I, b_I$ and $h_1,h_2$ by \eqref{eq:FCP_hyp_cont_par_final} and \eqref{eq:FCP_h} }
		\FOR{$m=-N_1$ \TO $N_1$}
		\STATE{Solve $(z(m h_1)^\alpha I+A)v = u_0$ \label{alg:FCP_hom_sol_appr_Res1}}
		\STATE{$F_{1,m}:=  z(m h_1)^{\alpha-1}v - \frac{1}{z(m h_1)}u_0$}\label{alg:FCP_hom_sol_appr_cor1}
		\ENDFOR
		\FOR{each $t_k$}
		\STATE{$\wt{u}_\mathrm{h}^N(t_k) := u_0 + \frac{h_1}{2 \pi i} \suml_{m=-N_1}^{N_1} \!z'(mh_1){e^{z(mh_1)t_k }}F_{1,m}$ \label{alg:FCP_hom_prop_eval1}}
		\ENDFOR
		\IF{$\alpha > 1$}
		\FOR{$m=-N_2$ \TO $N_2$}
		\STATE{Solve $(z(m h_2)^\alpha I+A)v = u_1$ \label{alg:FCP_hom_sol_appr_Res2}}
		\STATE{$F_{2,m}: = z'(mh_2)z(m h_2)^{\alpha-2}v$}\label{alg:FCP_hom_sol_appr_cor2}
		\ENDFOR
		\FOR{each $t_k$}
		\STATE{$\wt{u}_\mathrm{h}^N(t_k) := \wt{u}_\mathrm{h}^N(t_k) + \frac{h_2}{2 \pi i} \suml_{m=-N_2}^{N_2} {e^{z(mh_2)t_k }}F_{2,m}$}
		\ENDFOR
		\ENDIF
		\RETURN{$\{\wt{u}_\mathrm{h}^N(t_k)\}$}
	\end{algorithmic}
	\caption{Algorithm for computing the homogeneous part approximation $\wt{u}_\mathrm{h}^N(t)$.%
		\label{alg:FCP_hom_sol_appr}%
	}
\end{algorithm}
\FloatBarrier
\noindent If $A$ is the discretization of a certain partial differential operator, every resolvent equation from line \ref{alg:FCP_hom_sol_appr_Res1} of the algorithm is actually a system of linear equations.
When this is the case, one can leverage additional level of parallelism here, as long as the size and the solution method of the resolvent equation warrant that and the computing environment permits for such possibility.
Furthermore, the total number of resolvent evaluations in \cref{alg:FCP_hom_sol_appr} can be reduced all the way down to  $N_1+N_2+2$ if the initial data and $A$ satisfy the conditions from \cref{rem:FCP_prop_half_contour}.

Given the solution of resolvent equations obtained in the first cycle of \cref{alg:FCP_hom_sol_appr},  the second cycle computes the resulting propagator approximation.
As we can see from line \ref{alg:FCP_hom_prop_eval1} of \cref{alg:FCP_hom_sol_appr}, for every fixed $t = t_k$ this step amounts to calculating the weighted sum of resolvents.
Hence, its computation is apparently independent of the computed values of solution at different times and can be performed simultaneously.
Such feature of the method alone results in the substantial computational advantage over existing sequential time-discretization methods \cite{Lubich2004,Baffet2017,Diethelm2019,Khristenko2021}, because the average computation cost per $\wt{u}_\mathrm{h}^N(t_k)$ for any $k \in \{1,\ldots, K\}$ is independent on the value of $t_k \in [0, T]$ and, unlike in the case of a sequential method, this cost goes down when $K$ grows.
Even in the worst-case scenario of $K=1$, $t_k<<1$, our method should still remain competitive with the mentioned sequential methods due to its parallelization capability and the uniform exponential convergence.
We postpone a more detailed comparison with existing methods until \cref{ex:FCP_ex3_inhom_R_FD}, where a fully discretized problem is considered.
It is important to point out that the described multi-level parallel evaluation strategy is well suited for the multi-node computing architectures, with each node containing the combination of a central processing unit and multiple hardware accelerators, that are ubiquitous nowadays.

For certain realizations of $A$ (c.f. \cite{McLean2010a}) and large values of $N$, the resolvent evaluation steps of \cref{alg:FCP_hom_sol_appr} might lead to the numerical instability when $|z|$ is large.
This problem can be alleviated by modifying lines \ref{alg:FCP_hom_sol_appr_Res1}-\ref{alg:FCP_hom_sol_appr_cor1} and \ref{alg:FCP_hom_sol_appr_Res2}-\ref{alg:FCP_hom_sol_appr_cor2} as described in \cite[Eq. (2.18)]{McLean2010}.
Another noticeable feature of the above algorithm is its use of the resolvent evaluations with the complex arguments.
This may require additional attention from the implementation point of view if the resolvent is evaluated numerically, for instance using the finite element method software that does not support complex arithmetic.
Alternatively, one could deal with the complex resolvent arguments by redefining  $A$ via the embedding of its domain into the real space of higher dimensionality.
This is always possible, since the resolvent equations from lines \ref{alg:FCP_hom_sol_appr_Res1} and \ref{alg:FCP_hom_sol_appr_Res2} of the algorithm are linear in $z$.
Such modifications are unnecessary for the numerical experiments conducted below; hence, we do not incorporate them into the algorithms for simplicity.

\FloatBarrier

\begin{example}\label{ex:FCP_ex1_hom_R_eigenfunction}
	Let us consider the standard example problem in which $A$ is a one-dimensional Laplacian accompanied by the Dirichlet boundary conditions on $[0, L]$:
	\begin{equation}\label{eq:FCP_ex1_hom_A}
		\begin{split}
			 & Au= - a\frac{d^2}{dx^2}u, \quad \forall u \in D(A),  \\
			 & D(A)=\{u(x) \in H^2(0,L): \quad  u(0) = u(L) =  0\},
		\end{split}
	\end{equation}
	where $a>0$ is some predefined constant.
	The initial values $u_0, u_1$ are chosen to be the eigenfunctions of the operator $A$ with indices $k_0$, $k_1$, correspondingly:
	\begin{equation}\label{eq:FCP_ex1_hom_IV}
		u_0 = \sin{\frac{\pi k_0 x}{L}}, \quad u_1 = \sin{\frac{\pi k_1 x}{L}}.
	\end{equation}
	The exact solution of fractional Cauchy problem \eqref{eq:FCP_DE}, \eqref{eq:FCP_BC} with such $A$, $u_0, u_1$ and $f(t) = 0$  can be represented as follows (see Section 1.3 in \cite{Bazhlekova2001}):
	\[
		u(t, x) = E_{\alpha,1}(-\lambda(k_0) t^\alpha)  \sin{\frac{\pi k_0 x}{L}} + H(\alpha-1)E_{\alpha,2}(-\lambda(k_1) t^\alpha) \sin{\frac{\pi k_1 x}{L}}.
	\]
	Here, $E_{\alpha,\beta}(z)  = \suml_{k=0}^{\infty} \frac{z^k}{\Gamma(\alpha n + \beta)}$ is the Mittag--Leffler function, $\lambda(k) = a\frac{\pi^2}{L^2} k^2$, $k  = k_0, k_1$ are the eigenvalues of $A$ and $H(\cdot)$ is the Heaviside function, which is added to make the above solution formula valid for all $\alpha \in (0, 2)$.

	It is easy to verify that for any $z \in {\mathbb C} \setminus \mathrm{Sp}(-A)$ and  $k \in \N$, the resolvent \\ $R(z, -A)\sin{\frac{\pi k x}{L}}$ admits the following representation
	\[
		R(z, -A)\sin{\frac{\pi k x}{L}}
		= (z I+A)^{-1}\sin{\frac{\pi k x}{L}}
		= \frac{1}{z +\lambda(k)} \sin{\frac{\pi k x}{L}}.
	\]
	Hence, all the resolvent evaluations in \cref{alg:FCP_hom_sol_appr} for such $u_0, u_1$ can be conducted explicitly.
	This allows us to focus on analyzing the error contribution from the numerical method for $u_{\mathrm{h}}(t)$, given by \eqref{eq:FCP_hom_sol_appr}, in the absence of the error associated with the discretization of spatial operator $A$.
	The results, presented below, were obtained using the implementation\footnote{The code is available at \url{github.com/DmytroSytnyk/FCP2023}} of \cref{alg:FCP_hom_sol_appr}  developed in Matlab.
	The standard double precision IEEE~754 arithmetic (and its extension to complex numbers) is used for computations everywhere in this and other examples.
	The evaluation of $E_{\alpha,\beta}(z)$ was performed via the contour method from~\cite{Garrappa2015}, using the accompanied Matlab implementation.
	The interested reader may also consider alternative methods from \cite{Seybold2009,McLean2021}.

	The behavior of the exact solution $u(t, x)$ for the simplest case $c =1$, $L =1$ is shown in \cref{fig:FCP_Ex1_ex_sol}, where it is plotted as a function of time for different values of $\alpha$ at $x = 0.5$.
	In the sub-parabolic case $\alpha \leq 1$, the solution remains positive for positive $u_0$ and it is monotonously decaying toward zero as $ t \rightarrow T$.
	More specifically, for small $\alpha$ (see graphs for $\alpha = 0.1, 0.3$ in the left plot of \cref{fig:FCP_Ex1_ex_sol}), $|u(t)|$ has a fast initial decay which tends to be getting slower as $t$ progresses.
	This effect becomes less noticeable as $\alpha$ goes toward $1$, at which point $u(t, x) = E_{1,1}(-\pi^2 t)  \sin{\pi x} = e^{-\pi^2 t} \sin{\pi x}$.
	In the sub-hyperbolic case $\alpha > 1$ (see the right plot of \cref{fig:FCP_Ex1_ex_sol}), the solution exhibits more complex behavior.
	It is akin to the damped oscillations with the initial amplitude equal to $u_0$ ($E_{\alpha,2}(0) = 0$ by definition) and the amount of damping that decays as $\alpha$ approaches $2$.
	\begin{figure}[h!tb]
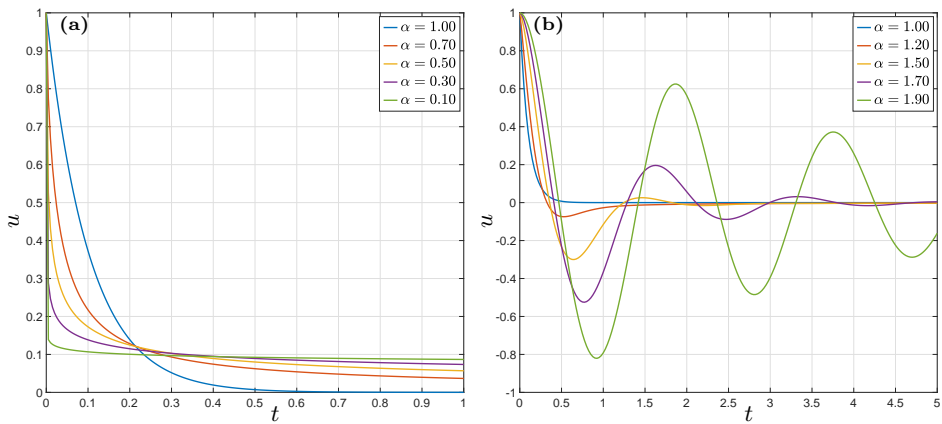

		\centering
		\newdimen\lentwosubfig
		\lentwosubfig=0.485\linewidth
		\iflatexml
			\includegraphics[width=0.98\lentwosubfig, viewport=100 60 1100 1000, clip=true]%
			{Ex1/FCP_sol_ex_vs_t_alpha_0.1_0.3_0.5_1_n0_1_n2_4}
			\includegraphics[width=0.98\lentwosubfig, viewport=100 60 1100 1000, clip=true]%
			{Ex1/FCP_sol_ex_vs_t_alpha_1_1.3_1.5_1.7_1.9_n0_1_n2_4}
		\else
			\hspace*{0.4em}
			\begin{overpic}[width=0.98\lentwosubfig, viewport=100 60 1100 1000, clip=true]%
				{Ex1/FCP_sol_ex_vs_t_alpha_0.1_0.3_0.5_1_n0_1_n2_4}
				\put(54,-1){\small $t$}
				\put(-3,41){\small \rotatebox{90}{$u$}}
				\put(8,84){\scriptsize \bf (a)}
			\end{overpic}\hfil
			\begin{overpic}[width=0.98\lentwosubfig, viewport=100 60 1100 1000, clip=true]%
				{Ex1/FCP_sol_ex_vs_t_alpha_1_1.3_1.5_1.7_1.9_n0_1_n2_4}
				\put(54,-1){\small $t$}
				\put(-3,41){\small \rotatebox{90}{$u$}}
				\put(8,84){\scriptsize \bf (b)}
			\end{overpic}
		\fi
		\caption[Example 1: Exact solution]{Exact solution $u(t,0.5)$ of problem \eqref{eq:FCP_DE}, \eqref{eq:FCP_BC} with $f(t) = 0$ and $A$, $u_0$, $u_1$ defined by \eqref {eq:FCP_ex1_hom_A}, \eqref{eq:FCP_ex1_hom_IV} ($L =1$, $k_0 = 1$, $k_1 = 4$, $a = 1$): {\bf (a)} the case $\alpha = 0.1, 0.3, 0.5, 0.7, 1$; {\bf (b)} the case $\alpha = 1, 1.2, 1.5, 1.7, 1.9$.} \label{fig:FCP_Ex1_ex_sol}
	\end{figure}

	To quantify the error of the numerical solution to Cauchy problem \eqref{eq:FCP_DE}, \eqref{eq:FCP_BC}, \eqref{eq:FCP_ex1_hom_A},  \eqref{eq:FCP_ex1_hom_IV}, calculated using \cref{alg:FCP_hom_sol_appr}, we define
	\[
		\cE_{\mathrm{h}}(t,x) = \left| u(t,x) - \wt{u}_{\mathrm{h}}^N(t,x) \right|, \quad \mathrm{err}_{\mathrm{h}}
		=  \sup\limits_{t \in [0, T]} 	\left\|  \cE_{\mathrm{h}}(t)\right\|_\infty.
	\]
	The behavior of $\cE_{\mathrm{h}}(t,x)$ as a function of $t$ for fixed $x = 0.5$, $L =1$, $k_0 = 1$, $k_1 = 4$, $a = 1$, $\varphi_s = \pi/60$, $\gamma =1$  and different values of $\alpha$, $N$ is illustrated in \cref{fig:FCP_Ex1_ex_err_vs_t}, using a semi-logarithmic scale for the plots.
	All graphs clearly illustrate the decay of $\cE_{\mathrm{h}}(t,x)$ on the whole time interval as $N$ increases.
	In the consequence of \cref{thm:FCP_prop_appr}, the error $\cE_{\mathrm{h}}(t,x)$ also depends on how much fractional order $\alpha$ deviates from 1.
	For $\alpha \leq 1$, this happens due to the direct presence of $\alpha$ in error bound~\eqref{eq:FCP_SO_cor_err_est} as a factor.
	For $\alpha>1$, this is explained by the contribution of $\alpha$ to the factor $c = \sqrt{2\pi d}$, from  the same error bound, via \eqref{eq:FCP_hyp_cont_d}.

	Aside of that, for $\alpha \leq 1$, we witness a sharp drop of $\cE_{\mathrm{h}}(t,x)$ in the vicinity $t=0$ (see \cref{fig:FCP_Ex1_ex_err_vs_t}~{(\textbf{a})-(\textbf{c})}), which does not seem to be predicted by the error bound.
	This behavior is attributed to the rather pessimistic estimate $|e^{z(\xi)t}| \leq e^{(a_0 - a(\nu)\cosh{\xi}) t} \leq e^{a_0 t}$, which was used to account for the contribution of the $t$-dependent term into both truncation and discretization errors of $\wt{S}_{\alpha,1}^N(t) u_0$ (see the proof of  \cref{lem:FCP_SO_ISO_Hp_bound,lem:FCP_SO_ISO_trunc_bound} above).
	Similar phenomenon was observed in \cite{McLean2010}, where a related fractional problem was considered. The influence of factor $e^{a_0 t}$ became more evident for larger $t$, as seen from the graphs of \cref{fig:FCP_Ex1_ex_err_vs_t}~{(\textbf{d})-(\textbf{f})}.
	For fixed $N$, the amplitude of error oscillations increases when $t$ approaches $5$ but remains approximately equal $\alpha$-wise (visually larger amplitude oscillations for smaller $\alpha$ in \cref{fig:FCP_Ex1_ex_err_vs_t} are caused by the semi-log nature of the plots).
	This observation supports the theoretical claim from \cref{thm:FCP_prop_appr} that the growth of $\cE_{\mathrm{h}}(t,x)$ in time is not influenced by $\alpha$ or  $d$.
	\begin{figure}[h!tb]
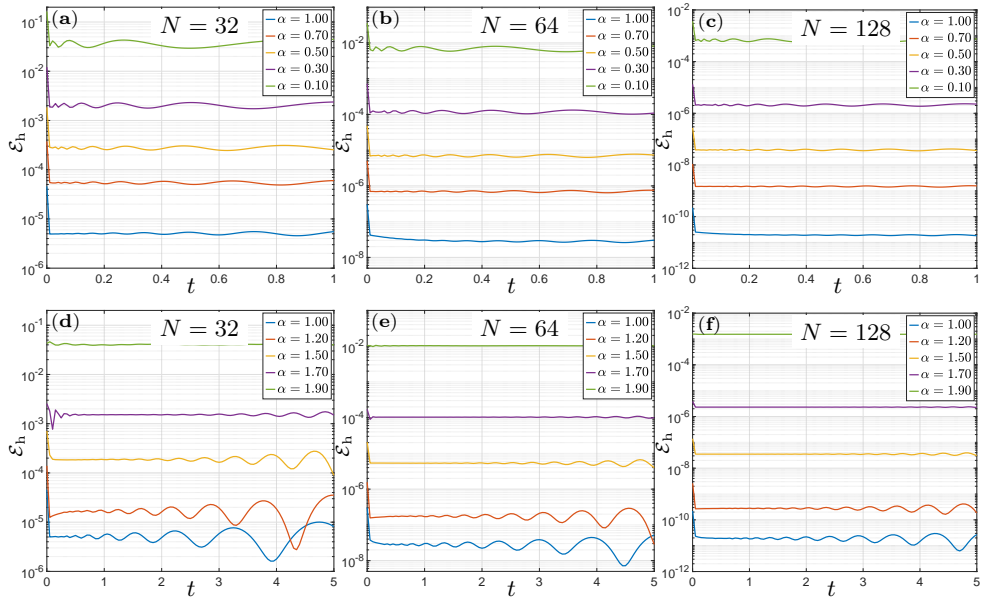

		\newdimen\lenthreesubfig
		\lenthreesubfig=0.325\linewidth
		\iflatexml
			\includegraphics[width=0.98\lenthreesubfig, viewport=00 00 1000 910, clip=true]%
			{Ex1/3x2/FCP_logerr_vs_t_alpha_0.1_0.3_0.5_0.7_1_n0_1_n2_4_N_32}
			\includegraphics[width=0.98\lenthreesubfig, viewport=00 00 1000 910, clip=true]%
			{Ex1/3x2/FCP_logerr_vs_t_alpha_0.1_0.3_0.5_0.7_1_n0_1_n2_4_N_64}
			\includegraphics[width=0.98\lenthreesubfig, viewport=00 00 1010 910, clip=true]%
			{Ex1/3x2/FCP_logerr_vs_t_alpha_0.1_0.3_0.5_0.7_1_n0_1_n2_4_N_128}
		\else
			\hspace*{0.2em}
			\begin{overpic}[width=0.98\lenthreesubfig, viewport=00 00 1000 910, clip=true]%
				{Ex1/3x2/FCP_logerr_vs_t_alpha_0.1_0.3_0.5_0.7_1_n0_1_n2_4_N_32}
				\put(41,79){\colorboxo{white}{\small $N = 32$} }
				\put(-4,41){\scriptsize \rotatebox{90}{$\cE_{\mathrm{h}}$}}
				\put(52,-3){\small $t$}
				\put(9,82){\scriptsize ({\bf a})}
			\end{overpic}
			\hspace*{-0.2em}
			\begin{overpic}[width=0.98\lenthreesubfig, viewport=00 00 1000 910, clip=true]%
				{Ex1/3x2/FCP_logerr_vs_t_alpha_0.1_0.3_0.5_0.7_1_n0_1_n2_4_N_64}
				\put(41,79){\colorboxo{white}{\small $N = 64$} }
				\put(-2,41){\scriptsize \rotatebox{90}{$\cE_{\mathrm{h}}$}}
				\put(52,-3){\small $t$}
				\put(9,82){\scriptsize ({\bf b})}
			\end{overpic}
			\hspace*{-0.2em}
			\begin{overpic}[width=0.98\lenthreesubfig, viewport=00 00 1010 910, clip=true]%
				{Ex1/3x2/FCP_logerr_vs_t_alpha_0.1_0.3_0.5_0.7_1_n0_1_n2_4_N_128}
				\put(41,78){\colorboxo{white}{\small $N = 128$} }
				\put(-2,42){\scriptsize \rotatebox{90}{$\cE_{\mathrm{h}}$}}
				\put(54,-3){\small $t$}
				\put(11,81){\scriptsize ({\bf c})}
			\end{overpic}%
		\fi
		\\[6pt]
		\iflatexml
			\includegraphics[width=0.98\lenthreesubfig, viewport=00 00 1000 910, clip=true]%
			{Ex1/3x2/FCP_logerr_vs_t_alpha_1_1.2_1.5_1.7_1.9_n0_1_n2_4_N_32}
			\includegraphics[width=0.98\lenthreesubfig, viewport=00 00 1000 910, clip=true]%
			{Ex1/3x2/FCP_logerr_vs_t_alpha_1_1.2_1.5_1.7_1.9_n0_1_n2_4_N_64}
			\includegraphics[width=0.98\lenthreesubfig, viewport=00 00 1010 910, clip=true]%
			{Ex1/3x2/FCP_logerr_vs_t_alpha_1_1.2_1.5_1.7_1.9_n0_1_n2_4_N_128}
		\else
			\hspace*{0.2em}
			\begin{overpic}[width=0.98\lenthreesubfig, viewport=00 00 1000 910, clip=true]%
				{Ex1/3x2/FCP_logerr_vs_t_alpha_1_1.2_1.5_1.7_1.9_n0_1_n2_4_N_32}
				\put(41,79){\colorboxo{white}{\small $N = 32$} }
				\put(-4,41){\scriptsize \rotatebox{90}{$\cE_{\mathrm{h}}$}}
				\put(52,-3){\small $t$}
				\put(9,82){\scriptsize ({\bf d})}
			\end{overpic}
			\hspace*{-0.2em}
			\begin{overpic}[width=0.98\lenthreesubfig, viewport=00 00 1000 910, clip=true]%
				{Ex1/3x2/FCP_logerr_vs_t_alpha_1_1.2_1.5_1.7_1.9_n0_1_n2_4_N_64}
				\put(41,79){\colorboxo{white}{\small $N = 64$} }
				\put(-2,41){\scriptsize \rotatebox{90}{$\cE_{\mathrm{h}}$}}
				\put(52,-3){\small $t$}
				\put(9,82){\scriptsize ({\bf e})}
			\end{overpic}
			\hspace*{-0.2em}
			\begin{overpic}[width=0.98\lenthreesubfig, viewport=00 00 1010 910, clip=true]%
				{Ex1/3x2/FCP_logerr_vs_t_alpha_1_1.2_1.5_1.7_1.9_n0_1_n2_4_N_128}
				\put(41,78){\colorboxo{white}{\small $N = 128$} }
				\put(-2,42){\scriptsize \rotatebox{90}{$\cE_{\mathrm{h}}$}}
				\put(52,-3){\small $t$}
				\put(11,81){\scriptsize ({\bf f})}
			\end{overpic}%
		\fi
		\caption[Example 1: Error versus t]{Error $\cE_{\mathrm{h}}(t,0.5)$ of the approximate solution $\wt{u}_{\mathrm{h}}^N$ to problem \eqref{eq:FCP_DE}, \eqref{eq:FCP_BC} with $f(t) = 0$, $A$, $u_0$, $u_1$ being defined by \eqref {eq:FCP_ex1_hom_A}, \eqref{eq:FCP_ex1_hom_IV} and $L =1$, $k_0 = 1$, $k_1 = 4$, $a = 1$. Graphs from the top row of subplots are for $\alpha = 0.1, 0.3, 0.5, 0.7, 1$ and {\bf (a)}  $N = 32$; {\bf (b)} $N = 64$ {\bf (c)}; $N = 128$.  Graphs from the bottom row of plots correspond to $\alpha = 1, 1.2, 1.5, 1.7, 1.9$ and {\bf (d)}  $N = 32$; {\bf (e)} $N = 64$; {\bf (f)} $N = 128$.}
		\label{fig:FCP_Ex1_ex_err_vs_t}
	\end{figure}

	In order to analyze the error dependency on the position of $\mathrm{Sp}(A)$, we evaluate the sup-norm error $\mathrm{err}_{\mathrm{h}}(N)$ for several values of diffusivity constant $a = 10^{-5}$, $0.1,$
	$1, 10$ from \eqref{eq:FCP_ex1_hom_A}, and a range of $\alpha$ values  (see  \cref{fig:FCP_Ex1_ex_err_vs_N}).
	The magnitude of the quantity
	$\rho_s = \inf\limits_{z \in \mathrm{Sp}(A)}\Re{z} = a\pi^2$
	corresponding to $a = 10^{-5}$ in \cref{fig:FCP_Ex1_ex_err_vs_N}~(\textbf{a}) is characteristic for
	problems with a singularly perturbed $A$ \cite{Kadalbajoo2010} and, in particular, advection (convection)-dominated flows \cite{Roos2008}.
	Our prior experiments suggest that existing numerical methods \cite{gm5,McLTh,McLean2010,thomee1}, with the integration contour which lies entirely in the same half-plane as $\mathrm{Sp}(A)$, face certain difficulties in handling problems with such small $\rho_s$.
	Those are caused by the implicit rescaling of $z(\xi)$ needed to fit $z(D_d)$ between $\mathrm{Sp}(A)$ and the origin.
	In contrast, the current method does not experience any accuracy degradation related to $\rho_s \to 0$, because the integration contour $\Gamma_I$ encircles $\mathrm{Sp}(A) \cup \{0\}$.
	In fact, \cref{fig:FCP_Ex1_ex_err_vs_N} shows that the sup-norm error decays exponentially with the order proportional to $\sqrt{\alpha N}$ as prescribed by \eqref{eq:FCP_hom_sol_err_est}, for all analyzed values of $a$.
	The convergence results of our method for $\alpha \leq 1$ are similar to those obtained in \cite{Colbrook2022a} for the specific case of \eqref{eq:FCP_DE}, when $A$ comes from the viscoelastic beam model.
	\begin{figure}[h!tb]
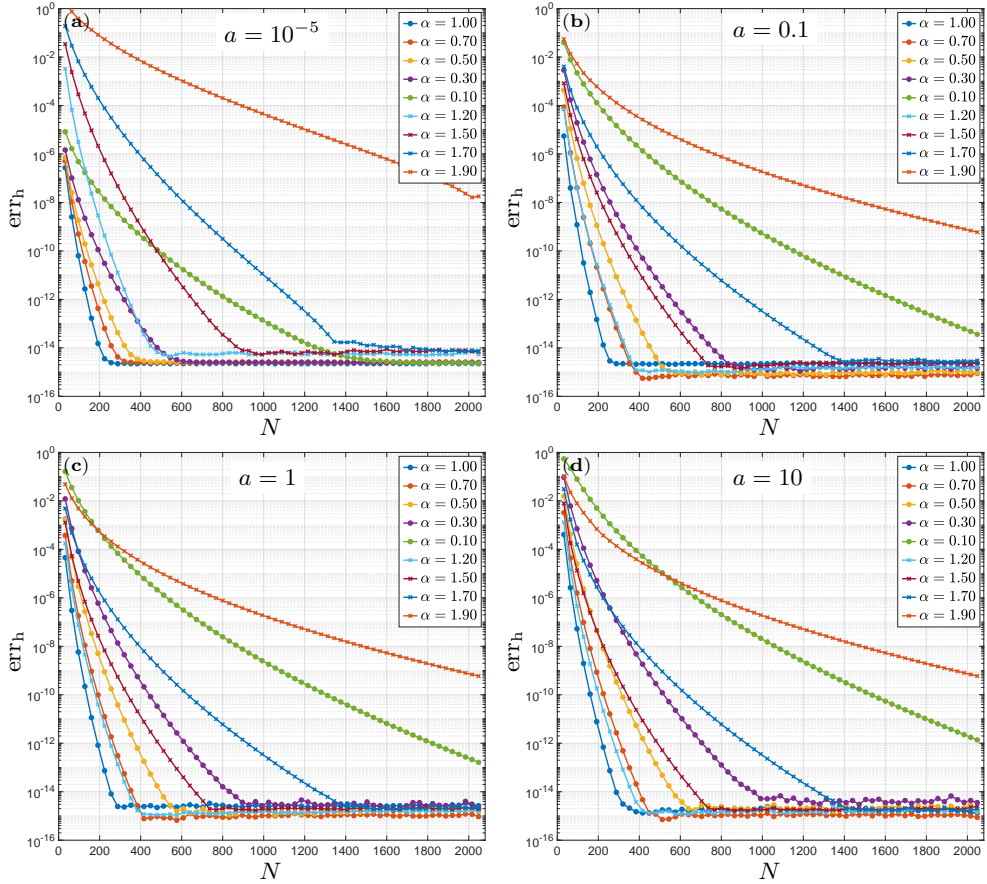

		\newdimen\lentwosubfig
		\lentwosubfig=0.48\linewidth
		\iflatexml
			\includegraphics[width=0.98\lentwosubfig, viewport=80 20 1100 960, clip=true]%
			{Ex1/FCP_error_vs_N_alpha_0.1_0.3_0.5_0.7_1_1.2_1.5_1.7_1.9_n0_1_n2_4_D_0.00001}
			\includegraphics[width=0.98\lentwosubfig, viewport=80 20 1100 960, clip=true]%
			{Ex1/FCP_error_vs_N_alpha_0.1_0.3_0.5_0.7_1_1.2_1.5_1.7_1.9_n0_1_n2_4_D_0.1}
		\else
			\hspace*{0.1em}
			\begin{overpic}[width=0.98\lentwosubfig, viewport=80 20 1100 960, clip=true]%
				{Ex1/FCP_error_vs_N_alpha_0.1_0.3_0.5_0.7_1_1.2_1.5_1.7_1.9_n0_1_n2_4_D_0.00001}
				\put(40,83){\colorboxo{white}{\small $a = 10^{-5}$} }
				\put(49,1){\small $N$}
				\put(-3,44){\small \rotatebox{90}{$\mathrm{err}_{\mathrm{h}}$}}
				\put(8,86){\scriptsize ({\bf a})}
			\end{overpic}
			\hspace*{0.5em}
			\begin{overpic}[width=0.98\lentwosubfig, viewport=80 20 1100 960, clip=true]%
				{Ex1/FCP_error_vs_N_alpha_0.1_0.3_0.5_0.7_1_1.2_1.5_1.7_1.9_n0_1_n2_4_D_0.1}
				\put(42,84){\colorboxo{white}{\small $ a = 0.1$} }
				\put(49,1){\small $N$}
				\put(-3,44){\small \rotatebox{90}{$\mathrm{err}_{\mathrm{h}}$}}
				\put(8,86){\scriptsize ({\bf b})}
			\end{overpic}
		\fi
		\\
		\iflatexml
			\includegraphics[width=0.98\lentwosubfig, viewport=80 20 1100 960, clip=true]%
			{Ex1/FCP_error_vs_N_alpha_0.1_0.3_0.5_0.7_1_1.2_1.5_1.7_1.9_n0_1_n2_4_D_1}
			\includegraphics[width=0.98\lentwosubfig, viewport=80 20 1100 960, clip=true]%
			{Ex1/FCP_error_vs_N_alpha_0.1_0.3_0.5_0.7_1_1.2_1.5_1.7_1.9_n0_1_n2_4_D_10}
		\else
			\hspace*{0.1em}
			\begin{overpic}[width=0.98\lentwosubfig, viewport=80 20 1100 960, clip=true]%
				{Ex1/FCP_error_vs_N_alpha_0.1_0.3_0.5_0.7_1_1.2_1.5_1.7_1.9_n0_1_n2_4_D_1}
				\put(43,83){\colorboxo{white}{\small $ a = 1$} }
				\put(49,1){\small $N$}
				\put(-3,44){\small \rotatebox{90}{$\mathrm{err}_{\mathrm{h}}$}}
				\put(8,86){\scriptsize ({\bf c})}
			\end{overpic}
			\hspace*{0.5em}
			\begin{overpic}[width=0.98\lentwosubfig, viewport=80 20 1100 960, clip=true]%
				{Ex1/FCP_error_vs_N_alpha_0.1_0.3_0.5_0.7_1_1.2_1.5_1.7_1.9_n0_1_n2_4_D_10}
				\put(42,83){\colorboxo{white}{\small $ a = 10$} }
				\put(49,1){\small $N$}
				\put(-3,44){\small \rotatebox{90}{$\mathrm{err}_{\mathrm{h}}$}}
				\put(8,86){\scriptsize ({\bf d})}
			\end{overpic}
		\fi
		\caption[Example 1: Error versus N]{Sup-norm error $\mathrm{err}_{\mathrm{h}}$ of the approximate solution $\wt{u}_{\mathrm{h}}^N$ to problem \eqref{eq:FCP_DE}, \eqref{eq:FCP_BC} with $f(t) = 0$, $A$, $u_0$, $u_1$ being defined by \eqref {eq:FCP_ex1_hom_A}, \eqref{eq:FCP_ex1_hom_IV} and $L =1$, $k_0 = 1$, $k_1 = 4$. Graphs from sublots correspond to the different values of diffusivity constant: {\bf (a)} $a = 1 \times 10^{-5}$; {\bf (b)} $a = 0.1$; {\bf (c)} $a = 1$; {\bf (d)} $a = 10$; }
		\label{fig:FCP_Ex1_ex_err_vs_N}
	\end{figure}
\end{example}

\subsection{Numerical Scheme for the Inhomogeneous Part}\label{sec:FCP_inhom_sol_appr}
In this part, we apply the propagator approximation method from \cref{sec:FCP_Prop_Approx} to obtain an efficient numerical algorithm for the inhomogeneous part $u_{\mathrm{ih}}(t)$ of the mild solution to~\eqref{eq:FCP_DE}, \eqref{eq:FCP_BC}, defined by \eqref{eq:FCP_hom_inhom}.
This formula combines the action of $S_{\alpha}(t)$ on a certain vector from $X$ with the subsequent action of the integral operator.
The numerical evaluation of such composition amounts to the reevaluation of  $S_{\alpha}(s)$ at each quadrature point $\left\{s_k\right\}_{k=1}^K$,  needed to approximate the outer integral.
As we have learned from the properties of the numerical method developed in \cref{sec:FCP_Prop_Approx},
this is not a problem for the first term of $u_{\mathrm{ih}}(t)$, where the argument $f(0)$ is fixed, because only $2N+1$ parallel resolvent evaluations are needed.
For the second term, however, the numerical evaluation of $S_{\alpha}(t-s) J_\alpha f'(s)$ for every new value of $t$ requires the reevaluation of resolvents for the entire set of new quadrature points on $\Gamma_I$.
This leads to the solution of up to $(2N+1)K$ additional stationary problems and may require additional storage and inter-process communication, when the parallel computing model is used for evaluation.
To reduce the number of required resolvent evaluations, we take advantage of the fact that the operator-dependent part of $S_{\alpha}(t)$ in representation \eqref{eq:FCP_SO_exp_cor_repr_par}  is itself a linear operator on $X$; hence, it can be interchanged with the integral operator acting in $t$ only
\begin{align*}
	 & \intl_0^t S_{\alpha}(t - s)   J_\alpha f'(s)\, ds
	= \intl_0^t \frac{1}{2\pi i}\intl_\Gamma  e^{z(t-s)}  z^{\alpha-1} (z^\alpha I + A)^{-1}  J_\alpha f'(s) \, dz \, ds                                                                         \\
	 & = 	\intl_0^t\frac{1}{2\pi i}   \intl_\Gamma   e^{z(t-s)}  \left( z^{\alpha-1} (z^\alpha I + A)^{-1} - \frac{1}{z}I \right) J_\alpha f'(s) \, dz +  J_\alpha f'(t)\, ds                    \\
	 & = 	\intl_0^t J_\alpha f'(s)\, ds + \frac{1}{2\pi i}   \intl_\Gamma   \left( z^{\alpha-1} (z^\alpha I + A)^{-1} - \frac{1}{z}I \right) \intl_0^t e^{z(t-s)}  J_\alpha f'(s) \, ds \, dz  .
\end{align*}
Here, we used formula \eqref{eq:FCP_SO_cor_repr} under the assumption that $J_\alpha f'(s) \in D(A^\gamma)$, $\gamma>0$ and, then, relied upon the uniform convergence of the corrected representation of $S_{\alpha}(t-s) $  with respect to $s \in [0, t]$, that had been established earlier.
As we can see from the newly obtained representation, now the evaluation of the time-dependent part is performed in the resolvent's argument.
This reduces the number of parallel resolvent evaluations per $t$ to $2N+1$ for this term.
The last representation permits us to rewrite the inhomogeneous part of solution $u_{\mathrm{ih}}(t)$ in the form
\vspace*{-6pt}
\begin{equation}\label{eq:FCP_inhom_cor_repr}
	\begin{aligned}
		u_{\mathrm{ih}}(t)  = & J_\alpha S_{\alpha}(t)f(0) + \intl_0^t J_\alpha f'(s)\, ds                                                           \\[-4pt]
		                      & +  \frac{1}{2\pi i}\intl_{-\infty}^\infty F_{\alpha,1}(\xi)\intl_0^t e^{z(\xi)(t-s)}  J_\alpha f'(s) \, ds \, d\xi .
	\end{aligned}
\end{equation}

Next, we address another ingredient essential to the numerical  evaluation of \eqref{eq:FCP_inhom_cor_repr}, which is an efficient quadrature method for the Riemann--Liouville integral  $J_\alpha v(t)$ defined by~\eqref{eq:FCP_RLInt}.
While evaluating this integral numerically, it is important to select the quadrature rule that, on the one hand, can handle the endpoint singularity appearing in the integrand when $\alpha<1$ and, on the other hand, is able to provide exponentially convergent approximation.
Among existing quadrature rules, only sinc-quadrature on a finite interval satisfies two mentioned properties simultaneously (see \cite{Stenger1993}).
We construct a version of such quadrature rule by transforming $J_\alpha$ into the integral over $(-\infty, \infty)$ and then applying the chosen sinc-quadrature formula.
Let $s = {t e^p}/\left ({1+e^p}\right )$, then
\[\begin{aligned}
		J_\alpha v(t)
		 & = \frac{1}{\Gamma(\alpha)}  \int\limits_{-\infty}^{\infty} \left( t - \frac{t e^p}{1+e^p}\right)^{\alpha -1 }v\left ( \frac{t e^p}{1+e^p} \right ) \, ds \\
		 & = \frac{t^{\alpha}}{\Gamma(\alpha)} \int\limits_{-\infty}^{\infty} \frac{e^p}{(1+e^p)^{\alpha + 1}} v\left ( \frac{t e^p}{1+e^p} \right ) \, dp.
	\end{aligned}\]
The reader may note that the singularity $(t-s)^{\alpha-1}$  from original definition \eqref{eq:FCP_RLInt} of $J_\alpha v(t)$ is no longer present in the last integral and
the new kernel of $J_\alpha v(t)$ decays exponentially as $p \rightarrow \infty$.
More precisely, there exist a constant $c>0$, such that
\begin{equation}\label{eq:RLInt_Decay}
	\frac{e^p}{(1+e^p)^{\alpha + 1}}
	= \left(e^{-\frac{p}{\alpha+1}}+e^{\frac{\alpha p}{\alpha + 1}} \right)^{-(\alpha + 1)}
	\leq c\begin{cases}
		e^{-\alpha p}, & p > 0, \\
		e^{p},         & p < 0. \\
	\end{cases}
\end{equation}

Our intent here is to approximate $J_\alpha v(t)$ by the time-dependent operator $\wt{J}_\alpha^N v(t)$, that takes into account the difference in a speed of kernel's decay as $p\rightarrow \pm \infty$, illustrated by the above bound.
To introduce the approximation $\wt{J}_\alpha^N v(t)$ properly, let us to recall the following definition \cite[Definition 3.1.5]{Stenger1993}.
The function $f$ is said to belong to the class $\mathbf{L}_{a,b}(D_d)$ if it is analytic in $D_d$ and there exist a constant $c > 0$ such that for all $z \in D_d$:
\[
	|v(z)|  \leq c\frac{|e^z|^a}{(1 + |e^z|)^{a + b}}.
\]
The constants $a, b > 0$ will be referred to as the decay orders (or the decay order if $a = b$).

\begin{proposition}\label{prop:FCP_RLInt}
	Assume that the function $v$: $[0,T] \rightarrow X$ is bounded $\|v(t)\| < \infty$, for any $t \in [0,T]$.
	If $v(z)$ admits analytic extension to the ``eye-shaped" region
	\begin{equation}\label{eq:FCP_D_d_2}
		D_d^2 = \left\{z \in \oC: \left|\arg{\left(\frac{z}{T-z}\right)}\right| < d \right\},
	\end{equation}
	for some $d \in (0, \pi/2)$, then the operator
	\begin{equation}\label{eq:FCP_RLInt_appr}
		\begin{aligned}
			\wt{J}_\alpha^N v(t)
			=  \frac{t^{\alpha}h }{\Gamma(\alpha)}\sum\limits_{k =-\lceil \varepsilon N \rceil}^{\lceil\delta N\rceil}  \frac{e^{kh}}{(1+e^{kh})^{\alpha + 1}} v\left ( \frac{t e^{kh}}{1+e^{kh}} \right ), \\
		\end{aligned}
	\end{equation}
	with $N \in \N$, $\varepsilon = \min\left \{1, \alpha\right \}$, $\delta = \min\left \{\frac{1}{\alpha}, 1\right \}$ and $h = \sqrt{\frac{2 \pi d}{\varepsilon N}}$,
	defines the convergent approximation to
	\begin{equation}\label{eq:FCP_RLInt_R}
		J_\alpha v(t)
		= \frac{t^{\alpha}}{\Gamma(\alpha)} \int\limits_{-\infty}^{\infty} \frac{e^p}{(1+e^p)^{\alpha + 1}} v\left ( \frac{t e^p}{1+e^p} \right ) \, dp.
	\end{equation}
	Moreover, for  all $t \in [0,T]$
	\begin{equation}\label{eq:FCP_RLInt_err_est}
		\left\|J_\alpha v(t) - \wt{J}_\alpha^N v(t)\right\|
		\leq \frac{C t^\alpha}{\Gamma(\alpha)} e^{- \sqrt{2 \pi d \varepsilon N} },
	\end{equation}
	where the constant $C>0$ is independent of $N$ and $t$.
\end{proposition}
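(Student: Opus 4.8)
The plan is to recognize $\wt{J}_\alpha^N v(t)$ as a truncated sinc-quadrature rule applied to the integral representation \eqref{eq:FCP_RLInt_R} and then to invoke the standard error theory for sinc quadratures on $\R$ (cf.\ \cite[Sec.~3.2]{Stenger1993}), once the transformed integrand has been placed in the appropriate analyticity-and-decay class. Set $G_t(p) = \frac{t^\alpha}{\Gamma(\alpha)}\,\frac{e^p}{(1+e^p)^{\alpha+1}}\,v\!\left(\frac{te^p}{1+e^p}\right)$; the substitution $s = te^p/(1+e^p)$ already carried out above gives $J_\alpha v(t) = \intl_{-\infty}^{\infty} G_t(p)\,dp$, while $\wt{J}_\alpha^N v(t) = h\suml_{k=-\lceil\varepsilon N\rceil}^{\lceil\delta N\rceil} G_t(kh)$ is precisely its truncated sinc approximation with the asymmetric cut-offs $-\lceil\varepsilon N\rceil$, $\lceil\delta N\rceil$. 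The error therefore splits into a discretization part $\left\|\intl_{-\infty}^{\infty} G_t(p)\,dp - h\suml_{k\in\Z} G_t(kh)\right\|$ and two one-sided truncation parts $h\left\|\suml_{k<-\lceil\varepsilon N\rceil} G_t(kh)\right\|$ and $h\left\|\suml_{k>\lceil\delta N\rceil} G_t(kh)\right\|$, which I would bound separately.

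The first step is to verify that $p\mapsto G_t(p)$ is analytic on the strip $D_d$. The map $p\mapsto \frac{te^p}{1+e^p}$ has inverse $p = \log\!\left(\frac{s}{t-s}\right)$, so that $\Im p = \arg\!\left(\frac{s}{t-s}\right)$; hence it is a conformal bijection of $D_d$ onto the eye-shaped domain $D_d^2$ of \eqref{eq:FCP_D_d_2}, and $v\!\left(\frac{te^p}{1+e^p}\right)$ is analytic on $D_d$ by the hypothesis on $v$. Since $d<\pi/2$, for $p\in D_d$ one has $\Re(e^p) = e^{\Re p}\cos(\Im p)>0$, so $1+e^p$ lies in the open right half-plane and the scalar prefactor $e^p/(1+e^p)^{\alpha+1}$ (principal branch) is analytic there as well. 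For the decay, the bound \eqref{eq:RLInt_Decay}, combined with $|1+e^p|\ge 1+e^{\Re p}\cos d$, extends to the strip and yields, uniformly in $|\Im p|\le d$ and in $t\in[0,T]$, $\left|\frac{e^p}{(1+e^p)^{\alpha+1}}\right|\le c\,\frac{|e^p|}{(1+|e^p|)^{\alpha+1}}$; together with the boundedness of $v$ on $D_d^2$ this places $G_t$ in $\mathbf{L}_{1,\alpha}(D_d)$ in the sense of \cite[Def.~3.1.5]{Stenger1993}, with $\|G_t\|_{\mathbf{H}^1(D_d)}\le C t^\alpha/\Gamma(\alpha)$ and $C$ independent of $t$ (the boundary integral converges because the kernel decays exponentially along $\partial D_d$).

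With $G_t\in\mathbf{L}_{1,\alpha}(D_d)$ established, the quoted theory applies directly. The smaller of the two decay orders is $\min\{1,\alpha\}=\varepsilon$, and the asymmetric truncation is balanced precisely because $\alpha\delta=\varepsilon$ (immediate from the two cases $\alpha\le1$ and $\alpha>1$): the left truncation error behaves like $e^{-\lceil\varepsilon N\rceil h}$ and the right one like $e^{-\alpha\lceil\delta N\rceil h}$, both $\mathcal{O}(e^{-\varepsilon N h})$. The discretization error is controlled by $\frac{e^{-\pi d/h}}{2\sinh(\pi d/h)}\|G_t\|_{\mathbf{H}^1(D_d)} = \mathcal{O}\!\left(e^{-2\pi d/h}\right)$. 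Balancing $2\pi d/h = \varepsilon N h$ yields the stated step size $h=\sqrt{2\pi d/(\varepsilon N)}$ and makes both contributions $\mathcal{O}\!\left(e^{-\sqrt{2\pi d\varepsilon N}}\right)$; absorbing the $t$-independent factor $t^\alpha/\Gamma(\alpha)$ into the constant gives \eqref{eq:FCP_RLInt_err_est}, and convergence $\wt{J}_\alpha^N v(t)\to J_\alpha v(t)$ follows.

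The main obstacle I anticipate is the analyticity-and-decay verification for the composed integrand $G_t$ on the full strip $D_d$: identifying the conformal equivalence $D_d\cong D_d^2$, checking that the branch cut of $(1+e^p)^{\alpha+1}$ is avoided, and --- most delicately --- obtaining the pointwise bound and the $\mathbf{H}^1(D_d)$-estimate \emph{uniformly in $t\in[0,T]$}, which in particular presupposes control of $v$ on (the closure of) $D_d^2$ rather than only on $[0,T]$. Once a $t$-uniform membership $G_t\in\mathbf{L}_{1,\alpha}(D_d)$ is in hand, the quadrature error bound and the specific choices of $h$, $\varepsilon$, $\delta$ are routine consequences of \cite{Stenger1993} and require only the elementary case check $\alpha\delta=\varepsilon$ noted above.
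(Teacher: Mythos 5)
Your proposal is correct and follows essentially the same route as the paper: map the strip $D_d$ conformally into the eye-shaped region via $p\mapsto te^p/(1+e^p)$, conclude that the transformed integrand lies in $\mathbf{L}_{1,\alpha}(D_d)$ uniformly in $t$, and then invoke (or, in your case, partially re-derive) Stenger's Theorem 4.2.6, balancing the one-sided truncation orders via $\alpha\delta=\varepsilon$ and the discretization error via $2\pi d/h=\varepsilon N h$. The only minor imprecision is that the map is a bijection onto $D_d^2$ only for $t=T$ (for $t<T$ the image is the scaled eye-shaped region around $[0,t]$, a proper subset of $D_d^2$, as the paper notes), which does not affect the argument.
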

\begin{proof}
	When $t = T$, the function $t{e^z}/(1+e^z)$ maps the infinite horizontal strip $D_d$ of half-height $d$ into the ``eye-shaped'' region $D_d^2$  (see \cite[Example 1.7.5]{Stenger1993}) around the interval $[0, T]$.
	For smaller values of $t$,  it maps $D_d$ into the region $tD_d^2 \equiv \left \{z \in C: z T/t \in D_d^2 \right \}$, which is a proper subset of $D_d^2$ as long as $t < T$.
	Consequently, if the assumptions regarding $v(z)$ are fulfilled, the integrand from \eqref{eq:FCP_RLInt_R} belongs to the class of functions $\mathbf{L}_{1,\alpha}(D_{d})$ for any $t \in (0, T]$.
	Then, the results regarding the convergence of \eqref{eq:FCP_RLInt_appr} to  \eqref{eq:FCP_RLInt_R}, as well as the form of~\eqref{eq:FCP_RLInt_appr} itself, and the error estimate stated in \eqref{eq:FCP_RLInt_err_est} follow from \cite[Theorem 4.2.6]{Stenger1993}.
\end{proof}

\begin{remark}\label{rem:FCP_RLInt_appr_ext}
	The results of \cref{prop:FCP_RLInt} remain valid if, instead of the boundedness of $v(z)$, we assume that the integrand from \eqref{eq:FCP_RLInt_R} belongs to the class $\mathbf{L}_{a,b}(D_d)$.
	In such case, the parameters $\epsilon, \delta$ from \eqref{eq:FCP_RLInt_appr} should be determined by $\epsilon = \min\{a,b\}/a$, $\delta = \max\{a,b\}/b$.
\end{remark}
The presence of factor $t^\alpha$ in error estimate \eqref{eq:FCP_RLInt_err_est} makes it possible to use fewer terms in~\eqref{eq:FCP_RLInt_appr} as $t$ decreases, if the end goal is to reach the prescribed accuracy uniformly in $t$.
Let us assume that the desired accuracy is achieved for some $t_0$ by setting $N=N_0$, then \mbox{for $t \in (0, t_0)$:}
\[
	t^\alpha e^{- \sqrt{2 \pi d \varepsilon N} } = t_0^\alpha e^{- \sqrt{2 \pi d \varepsilon N_0} }.
\]
After solving this equation for $N$, we obtain
\begin{equation}\label{eq:FCP_RLInt_appr_N_adj}
	N(t) = \left (\sqrt{N_0}+\frac{\alpha\ln{(t/t_0)}}{\sqrt{2 \pi d \varepsilon}} \right )^2.
\end{equation}
Formula \eqref{eq:FCP_RLInt_appr_N_adj} becomes instrumental in the situations where one needs to numerically evaluate $\wt{J}_\alpha^N v(t)$ for a range of $t$-values.
This is the case of the inhomogeneous part of solution representation given by \eqref{eq:FCP_inhom_cor_repr}, whose terms contain the integrals of $J_\alpha f'(s)$.

Before addressing the question on how to numerically evaluate \eqref{eq:FCP_inhom_cor_repr}, we would like to consider a prerequisite problem on how to quantify the contribution of the error in the argument $v(t)$ of $\wt{J}_\alpha^N v(t)$ to the overall error of approximation to $J_\alpha v(t)$.
\begin{corollary}\label{eq:FCP_RLint_NA}
	Assume that functions $v$, $\wt{v}$ satisfy the assumptions of \cref{prop:FCP_RLInt}.
	If $\|v(t) - \wt{v}(t)\| \leq \varkappa $, for all $t \in [0, T]$, then the error of approximation  $\wt{J}_\alpha^N \wt{v}(t)$, satisfies the bound
	\begin{equation}\label{eq:FCP_RLInt_biasederr}
		\left\|J_\alpha v(t) - \wt{J}_\alpha^N \wt{v}(t) \right\|
		\leq  \frac{C t^\alpha}{\Gamma(\alpha)} e^{- \sqrt{2 \pi d \varepsilon N} } + \frac{ t^{\alpha}(\alpha + 1)}{\Gamma(\alpha+1)}
		\varkappa .
	\end{equation}
\end{corollary}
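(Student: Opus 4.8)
The plan is to split the error by the triangle inequality into (a) the genuine sinc-quadrature error, where the \emph{same} function sits in both arguments, and (b) the propagation of the data error $\varkappa$ through the quadrature weights, and to bound the two pieces independently. Concretely, using that the operator $\wt{J}_\alpha^N$ defined by \eqref{eq:FCP_RLInt_appr} is linear in its function argument, I would write
\[
J_\alpha v(t) - \wt{J}_\alpha^N \wt{v}(t) = \bigl(J_\alpha v(t) - \wt{J}_\alpha^N v(t)\bigr) + \wt{J}_\alpha^N(v - \wt{v})(t).
\]
The first bracket is exactly the quantity estimated in \cref{prop:FCP_RLInt} (the analyticity and boundedness hypotheses on $v$ are assumed in the corollary), so it contributes at most $\frac{Ct^\alpha}{\Gamma(\alpha)}e^{-\sqrt{2\pi d\varepsilon N}}$, which is already the first summand of \eqref{eq:FCP_RLInt_biasederr} with the very same constant $C$.

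For the second bracket I would exploit that all quadrature weights $\frac{t^\alpha h}{\Gamma(\alpha)}\frac{e^{kh}}{(1+e^{kh})^{\alpha+1}}$ in \eqref{eq:FCP_RLInt_appr} are nonnegative and that every node $s_k = te^{kh}/(1+e^{kh})$ lies in $[0,t]\subseteq[0,T]$. Hence $\|v(s_k)-\wt{v}(s_k)\|\le\varkappa$ for each $k$, and factoring $\varkappa$ out of the (finite) sum gives $\|\wt{J}_\alpha^N(v-\wt{v})(t)\| \le \varkappa\,\wt{J}_\alpha^N\mathbf{1}(t)$, where $\mathbf{1}$ denotes the constant function equal to $1$. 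The constant function is entire and bounded on $[0,T]$, so it satisfies the hypotheses of \cref{prop:FCP_RLInt}; therefore $\wt{J}_\alpha^N\mathbf{1}(t) \le J_\alpha\mathbf{1}(t) + \frac{Ct^\alpha}{\Gamma(\alpha)}e^{-\sqrt{2\pi d\varepsilon N}}$, while a direct integration yields $J_\alpha\mathbf{1}(t) = \frac{1}{\Gamma(\alpha)}\int_0^t(t-s)^{\alpha-1}\,ds = \frac{t^\alpha}{\Gamma(\alpha+1)}$. Using $\Gamma(\alpha+1)=\alpha\Gamma(\alpha)$, the quadrature defect here is $\frac{\alpha t^\alpha}{\Gamma(\alpha+1)}\,Ce^{-\sqrt{2\pi d\varepsilon N}}$, which is $\le \frac{\alpha t^\alpha}{\Gamma(\alpha+1)}$ once $N$ exceeds the fixed threshold that makes $Ce^{-\sqrt{2\pi d\varepsilon N}}\le 1$; combined with $J_\alpha\mathbf{1}(t)$ this gives $\wt{J}_\alpha^N\mathbf{1}(t)\le \frac{(\alpha+1)t^\alpha}{\Gamma(\alpha+1)}$, hence the second summand of \eqref{eq:FCP_RLInt_biasederr}. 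Alternatively one can bypass the threshold by estimating $\wt{J}_\alpha^N\mathbf{1}(t) = \frac{t^\alpha h}{\Gamma(\alpha)}\sum_k g(kh)$ with $g(p)=e^p(1+e^p)^{-(\alpha+1)}$ directly: $g$ is positive and unimodal with $\int_{\R}g = 1/\alpha$, so $h\sum_k g(kh)\le \frac{1}{\alpha} + h\max_{\R}g$, and for the admissible step sizes $h=\sqrt{2\pi d/(\varepsilon N)}$ the bump correction $h\max_{\R}g$ stays below $1$, which again gives the factor $1+\alpha$.

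The only delicate point — and the one I would spend care on — is this last step: pinning the multiplicative constant in front of $\varkappa$ to precisely $\alpha+1$. Everything else is either read off verbatim from \cref{prop:FCP_RLInt} or is elementary (linearity, positivity of the weights, the closed form of $J_\alpha\mathbf{1}$). The substantive content is therefore reduced to verifying that the sinc-quadrature of the constant function overshoots its exact value $t^\alpha/\Gamma(\alpha+1)$ by at most the factor $1+\alpha$, i.e. to a one-variable estimate of the lattice sum of the bell-shaped kernel $g$ against its integral, which the threshold argument (or the unimodality bound above) supplies.
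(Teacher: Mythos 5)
Your decomposition is exactly the paper's: the triangle inequality splits the error into the pure quadrature error $\|J_\alpha v-\wt{J}_\alpha^N v\|$, handled verbatim by \cref{prop:FCP_RLInt}, plus the data-error term, which by positivity of the weights reduces to $\varkappa$ times the lattice sum $\frac{t^\alpha h}{\Gamma(\alpha)}\sum_k e^{kh}(1+e^{kh})^{-(\alpha+1)}$ --- this is precisely \eqref{eq:FCP_RLInt_biasederr1}. The only divergence is in how that elementary sum is pinned to $\frac{\alpha+1}{\alpha}$. The paper splits the sum at $k=0$ and bounds the two geometric tails directly, obtaining $h\sum_k g(kh)\le \frac{h}{1-e^{-h\alpha}}+\frac{he^{-h}}{1-e^{-h}}\le\frac{1}{\alpha}+1$ as a claim valid for every step size $h$ (see \eqref{eq:FCP_RLint_sbound_alpha}). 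Both of your alternatives --- applying \cref{prop:FCP_RLInt} to the constant function, or the unimodal Riemann-sum bound $h\sum_k g(kh)\le\frac{1}{\alpha}+h\max_{\R}g$ --- instead require $N$ to exceed an $\alpha$-dependent threshold: in the first case so that $Ce^{-\sqrt{2\pi d\varepsilon N}}\le 1$, and in the second so that $h=\sqrt{2\pi d/(\varepsilon N)}\le 1$, which for small $\alpha$ (where $\max_{\R}g=\alpha^\alpha(1+\alpha)^{-(1+\alpha)}$ is close to $1$ and $\varepsilon=\alpha$) forces $N\gtrsim 2\pi d/\alpha$. So your argument proves the stated constant only asymptotically in $N$, whereas the corollary is stated for arbitrary fixed $N$; either state the threshold explicitly or replace the last step by the paper's direct geometric-series summation. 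This is a constant-tracking issue, not a conceptual one, and you correctly identified it as the one delicate point.
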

\begin{proof}
	We rewrite \eqref{eq:FCP_RLInt_biasederr} as
	\[
		\left\|J_\alpha v(t) - \wt{J}_\alpha^N \wt{v}(t) \right\|
		\leq 	\left\|J_\alpha v(t) - \wt{J}_\alpha^N v(t) \right\|
		+ 	\left\| \wt{J}_\alpha^N v(t) - \wt{J}_\alpha^N \wt{v}(t) \right\|.
	\]
	The first term of this error decomposition is estimated by \eqref{eq:FCP_RLInt_err_est}, so we focus on the second~term
	\begin{equation}\label{eq:FCP_RLInt_biasederr1}
		\left\| \wt{J}_\alpha^N v(t) - \wt{J}_\alpha^N \wt{v}(t) \right\|
		\leq \frac{\varkappa t^{\alpha}h }{\Gamma(\alpha)}\sum\limits_{k =-[\varepsilon N]}^{[\delta N]}  \frac{e^{kh}}{(1+e^{kh})^{\alpha + 1}} ,
	\end{equation}
	where
	\begin{equation}\label{eq:FCP_RLint_sbound_alpha}
		\begin{aligned}
			h\sum\limits_{k =-[\varepsilon N]}^{[\delta N]}  \frac{e^{kh}}{(1+e^{kh})^{\alpha + 1}} & \leq  h\sum\limits_{k =0}^{[\delta N]}  e^{-kh \alpha}
			+  h\sum\limits_{k =1}^{[\varepsilon N]}  \frac{e^{-kh}}{(1+e^{-h[\varepsilon N]})^{\alpha + 1}}                                                 \\
			                                                                                        & \leq
			h\frac{1 - e^{-h \alpha [\delta N]}}{1 - e^{-h \alpha}}
			+h\frac{e^{-h}(1 - e^{-h [\varepsilon N]})}{(1 - e^{-h})(1+e^{-h[\varepsilon N]})^{\alpha +1}}                                                   \\
			                                                                                        & \leq
			\frac{h}{1 - e^{-h \alpha}}
			+\frac{he^{-h}}{1 - e^{-h}}
			\leq 	\frac{1}{\alpha} + 1 = \frac{\alpha +1}{\alpha}.                                                                                           \\
		\end{aligned}
	\end{equation}
	The bound $1 - e^{-h} = \sum\limits_{k=1}^\infty \frac{(-1)^{k+1} h^k}{k!} \geq h $ was used to cancel out $h$ in the last estimation step.
	The combination of 	\eqref{eq:FCP_RLInt_err_est}, \eqref{eq:FCP_RLInt_biasederr1} and \eqref{eq:FCP_RLint_sbound_alpha} completes the proof.
\end{proof}

With all the necessary results in place, now we move on to construct the approximation to $u_{\mathrm{ih}}(t)$.
To achieve that, we apply approximations \eqref{eq:FCP_SO_cor_sinc_quad},  \eqref{eq:FCP_RLInt_appr} and discretize the remaining time-dependent integrals in a similar fashion as the Riemann--Liouville integral $J_\alpha$ (see \cref{prop:FCP_RLInt}).
The rationale for such integral discretizations will become apparent when we analyze the error below. Meanwhile, let us introduce the proposed approximation $\wt{u}_{\mathrm{ih}}^N(t)$ of the inhomogeneous solution $u_{\mathrm{ih}}(t)$ from \eqref{eq:FCP_inhom_cor_repr}:
\begin{equation}\label{eq:FCP_inhom_sol_appr}
	\begin{aligned}
		\wt{u}_{\mathrm{ih}}^N(t)  = & \wt{J}_\alpha^{N_0} \wt{S}_{\alpha,1}^N(t)f(0) + h_1 \sum\limits_{k=-N_1}^{N_1}  \mathcal{G}_\alpha^{N_2}(0, t, kh_1) \\
		                             & + \frac{h_3 h_4}{2\pi i}\sum\limits_{\ell =-N_3}^{N_3}F_{\alpha,1} (\ell h_3)
		\sum\limits_{k=-N_4}^{N_4} \mathcal{G}_\alpha^{N_5}(z(l h_3), t, kh_4),
	\end{aligned}
\end{equation}
where $F_{\alpha,1} (\xi)$, $z(\xi)$ are defined in \cref{lem:FCP_Pro_parametrized} and
\[
	\begin{aligned}
		\mathcal{G}_\alpha^N(z, t, p) & = t\psi'(p) e^{z t (1-\psi(p))}\wt{J}_\alpha^{N}  f'\left ( t\psi(p) \right ), \quad \psi(p) = \frac{e^p}{1+e^p}.
	\end{aligned}\]

\begin{theorem}\label{thm:FCP_inhom_sol_err_est}
	Let $A$ be a sectorial operator satisfying the assumptions of \cref{thm:FCP_prop_appr}.
	If the function $f(t)$ from \eqref{eq:FCP_DE} admits the analytic extension to the "eye-shaped" domain $D_d^2$, $d \in (0, \pi/2)$ and
	\begin{equation}\label{eq:FCP_rhs_reg_strip}
		f(0),f'(z) \in D(A^{\chi}), \quad \forall z \in  D_d^2,
	\end{equation}
	with some $\chi > 0$,
	then for any $\alpha \in (0, 2)$ and $t \in [0, T]$,  the approximation $\wt{u}_{\mathrm{ih}}^N(t)$ from \eqref{eq:FCP_inhom_sol_appr} converges to the inhomogeneous part $u_{\mathrm{ih}}(t)$ of the mild solution to \eqref{eq:FCP_DE}, \eqref{eq:FCP_BC}, defined by \eqref{eq:FCP_hom_inhom}.
	Moreover, for any fixed $N \in \N$, the following error bound is valid:
	\begin{equation}\label{eq:FCP_inhom_sol_err_est}
		\left \|u_{\mathrm{ih}}(t) - \wt{u}_{\mathrm{ih}}^N(t) \right \|
		\leq
		{C_{\chi,f}}  \left(\frac{t}{\alpha \chi} + \frac{1+t}{\Gamma(\alpha)}t^\alpha +  \frac{\chi + t(1 + \chi) }{\chi} t^\alpha e^{a_0 t}  \right)
		e^{-c\sqrt{\alpha\chi N}},
	\end{equation}
	{with $c=\sqrt{2\pi d}$,  provided that the values of $N_i$ and $h,h_i$ in \eqref{eq:FCP_inhom_sol_appr}  are determined by the following~formulas}
	\begin{equation}\label{eq:FCP_inhom_sol_N}
		N_1 = N_4 = \lceil \alpha\chi N \rceil,
		\quad
		N_3 = N,
		\quad
		N_0 = N_2 = N_5 = \left\lceil \frac{\alpha\chi N}{\min\left \{1, \alpha\right \}} \right\rceil ,
	\end{equation}
	\begin{equation}\label{eq:FCP_inhom_cor_h}
		h = h_i=\sqrt{\frac{2 \pi d}{\alpha \chi N}}, \quad i = 0, \ldots 5.
	\end{equation}
	Here,
	$d = \frac{\phi_\alpha - \phi_c}{2}$, $\phi_\alpha = \min\left\{\pi, \frac{\pi -\varphi_s}{\alpha}\right\}$  and $\phi_c \in \left[ \tfrac{\pi}{2}, \phi_\alpha \right)$, $a_0>0$  are given.
	The constant $C_{\chi,f}$ from~\eqref{eq:FCP_inhom_sol_err_est} is dependent on $\left\| A^{\chi}f'(z)\right \|$, ${z \in D_d^2}$ and independent of $t,N$.
\end{theorem}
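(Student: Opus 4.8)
The plan is to bound $\|u_{\mathrm{ih}}(t)-\wt u_{\mathrm{ih}}^N(t)\|$ by the triangle inequality, splitting it into three contributions $\mathcal E_1,\mathcal E_2,\mathcal E_3$ that pair the three summands of \eqref{eq:FCP_inhom_cor_repr} with the three summands of \eqref{eq:FCP_inhom_sol_appr}. Every elementary approximation step involved is already quantified: the propagator error by \cref{thm:FCP_prop_appr} (used with $\gamma=\chi$), the Riemann--Liouville sinc-quadrature error by \cref{prop:FCP_RLInt} and \cref{rem:FCP_RLInt_appr_ext}, the propagation of an argument perturbation through $\wt J_\alpha^N$ by \cref{eq:FCP_RLint_NA}, and the $|z|^{-\alpha\chi}$-type decay of $F_{\alpha,1}$ on $D(A^\chi)$ by the correction bound \eqref{eq:FCP_res_cor_norm_est}. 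The exponents in \eqref{eq:FCP_inhom_sol_N}, \eqref{eq:FCP_inhom_cor_h} are precisely those that align every rate coming out of these lemmas to $e^{-c\sqrt{\alpha\chi N}}$, $c=\sqrt{2\pi d}$: indeed $\min\{1,\alpha\}N_2=\min\{1,\alpha\}N_5=N_1=N_4=\alpha\chi N$, and with $h_3=\sqrt{2\pi d/(\alpha\chi N)}$ and $N_3=N$ one has $2\pi d/h_3=\alpha\chi N_3 h_3=\sqrt{2\pi d\,\alpha\chi N}$.

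For $\mathcal E_1$ I apply \cref{eq:FCP_RLint_NA} with $v(s)=S_\alpha(s)f(0)$, $\wt v(s)=\wt S_{\alpha,1}^N(s)f(0)$: both extend analytically and boundedly to $D_d^2\subset\{\Re z\ge 0\}$ (the second being a finite exponential sum plus a constant), so the hypotheses of \cref{prop:FCP_RLInt} hold, the argument error $\varkappa=\sup_{s\in[0,t]}\|v(s)-\wt v(s)\|$ is estimated by \eqref{eq:FCP_SO_cor_err_est} with $\gamma=\chi$ (using $f(0)\in D(A^\chi)$, inherited since $D(A^\chi)$ is closed and $f'(z)\in D(A^\chi)$), and the leading constant $C_1$ in \eqref{eq:FCP_SO_cor_err_est} carries a $(\alpha\chi)^{-1}$ factor through $C_{\alpha,1}(\chi,\cdot)$ in \eqref{eq:FCP_SO_exp_cor_repr_norm_const}. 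This gives $\mathcal E_1\le C\,(\tfrac{t^\alpha}{\Gamma(\alpha)}+\tfrac{t^\alpha}{\alpha\chi}e^{a_0t})\,e^{-c\sqrt{\alpha\chi N}}$. For $\mathcal E_2$ the key point is the identity $\intl_0^tJ_\alpha f'(s)\,ds=J_1(J_\alpha f')(t)$: the term $h_1\suml_{k}\mathcal{G}_\alpha^{N_2}(0,t,kh_1)$ is exactly the order-$1$ parametrized sinc-quadrature of this integral with $N_1$ nodes, applied with $J_\alpha$ replaced by $\wt J_\alpha^{N_2}$. Hence \cref{eq:FCP_RLint_NA} with order $1$, $v=J_\alpha f'$, $\wt v=\wt J_\alpha^{N_2}f'$, together with \eqref{eq:FCP_RLInt_err_est} for $\|J_\alpha f'(s)-\wt J_\alpha^{N_2}f'(s)\|\le\tfrac{Cs^\alpha}{\Gamma(\alpha)}e^{-c\sqrt{\alpha\chi N}}$, gives $\mathcal E_2\le C\tfrac{(1+t)t^\alpha}{\Gamma(\alpha)}e^{-c\sqrt{\alpha\chi N}}$; analyticity of $J_\alpha f'$ on $D_d^2$ follows from that of $f'$ and the Riemann--Liouville formula.

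The term $\mathcal E_3$ carries the real work. Writing $\mathcal{I}(\xi)=\intl_0^t e^{z(\xi)(t-s)}J_\alpha f'(s)\,ds=\intl_{-\infty}^\infty t\psi'(p)e^{z(\xi)t(1-\psi(p))}J_\alpha f'(t\psi(p))\,dp$ and $\wt{\mathcal{I}}(\xi)=h_4\suml_{k=-N_4}^{N_4}t\psi'(kh_4)e^{z(\xi)t(1-\psi(kh_4))}\wt J_\alpha^{N_5}f'(t\psi(kh_4))$, I split $\mathcal E_3\le\mathcal E_3^a+\mathcal E_3^b$, with $\mathcal E_3^a$ the sinc-quadrature error of $\tfrac1{2\pi i}\intl_{-\infty}^\infty F_{\alpha,1}(\xi)\mathcal{I}(\xi)\,d\xi$ on the nodes $\{\ell h_3\}_{|\ell|\le N_3}$, and $\mathcal E_3^b=\tfrac{h_3}{2\pi}\suml_{|\ell|\le N_3}\|F_{\alpha,1}(\ell h_3)(\mathcal{I}(\ell h_3)-\wt{\mathcal{I}}(\ell h_3))\|$. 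For $\mathcal E_3^a$ I reuse the scheme of the proof of \cref{thm:FCP_prop_appr}: $F_{\alpha,1}(\xi)\mathcal{I}(\xi)$ is analytic in $D_d$, \cref{prop:FCP_res_cor} with $m=0$, $\gamma=\chi$ gives $\|F_{\alpha,1}(\xi)\mathcal{I}(\xi)\|\le|z'(\xi)|\,|z(\xi)|^{-1}(1+|z(\xi)|^\alpha)^{-\chi}K(1+M)\|A^\chi\mathcal{I}(\xi)\|$, where $\|A^\chi\mathcal{I}(\xi)\|\le e^{a_0t}\tfrac{t^{\alpha+1}}{\Gamma(\alpha+2)}\sup_{z\in D_d^2}\|A^\chi f'(z)\|$ because $A^\chi$ commutes with $J_\alpha$ and $|e^{z(\xi)(t-s)}|\le e^{a_0(t-s)}$ on $D_{d-\delta}$; bounding $|z'/z|$ and $(1+|z|^\alpha)^{-\chi}$ on $D_{d-\delta}$ as in \cref{lem:FCP_SO_ISO_Hp_bound} yields a finite ${\bf H}^1(D_{d-\delta})$ norm, and then \eqref{eq:FCP_prop_discr_error} plus the truncation estimate (the integrand decays like $e^{-(1+\alpha\chi)|\xi|}$) give $\mathcal E_3^a\le C\,e^{a_0t}(1+t^{\alpha+1})\,e^{-c\sqrt{\alpha\chi N}}$. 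For $\mathcal E_3^b$, each inner error $\mathcal{I}(\ell h_3)-\wt{\mathcal{I}}(\ell h_3)$ is split once more into the $p$-sinc-quadrature error and the error of replacing $J_\alpha f'$ by $\wt J_\alpha^{N_5}f'$; applied to $A^\chi f'$ these are $O(e^{-c\sqrt{\alpha\chi N}})$ in the $A^\chi$-norm, uniformly in $\ell$, by \cref{rem:FCP_RLInt_appr_ext} and \eqref{eq:FCP_RLInt_err_est}; then $\|F_{\alpha,1}(\ell h_3)(\cdot)\|\le C e^{-\alpha\chi|\ell h_3|}\|A^\chi(\cdot)\|$ together with $h_3\suml_{|\ell|\le N_3}e^{-\alpha\chi|\ell h_3|}\le 2(\alpha\chi)^{-1}+h_3$ produces the $(\alpha\chi)^{-1}$ factor. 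Summing $\mathcal E_1,\mathcal E_2,\mathcal E_3^a,\mathcal E_3^b$ and absorbing the constants and polynomial-in-$t$ weights into $C_{\chi,f}$ delivers \eqref{eq:FCP_inhom_sol_err_est}.

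The genuine obstacle, hidden in $\mathcal E_3^b$, is to verify that for every real $\xi$ and every $p\in D_d$ the middle integrand $t\psi'(p)e^{z(\xi)t(1-\psi(p))}J_\alpha A^\chi f'(t\psi(p))$ lies in $\mathbf{L}_{1,1}(D_d)$ with a norm that is \emph{uniform in the outer node} $\xi$, so that \cref{rem:FCP_RLInt_appr_ext} yields a $\xi$-independent error constant; a priori this could fail, since $e^{z(\xi)t(1-\psi(p))}$ might grow double-exponentially in $\xi$ once $p$ leaves $\R$ (because $|z(\xi)|\sim e^{|\xi|}$). The resolution is geometric. For $p\in D_d$ one has $|1-\psi(p)|\le1$ and $|\arg(1-\psi(p))|<d$, because $p\mapsto t(1-\psi(p))=t/(1+e^p)$ maps $D_d$ into the eye region $D_d^2$, every point of which has argument of modulus less than $d$; while by the design of $\Gamma_I$ the asymptotes make the angle $\varphi_I=\tfrac{\phi_\alpha}{2}+\tfrac{\pi}{4}\ge\tfrac{\pi}{2}+d$ with $\R_+$, and $z(\xi)=\tfrac{e^{|\xi|}}{2}(-a_I\pm ib_I)+a_0+O(e^{-|\xi|})$, so $\arg z(\xi)=\pm\varphi_I+O(e^{-|\xi|})$. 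Hence either $|z(\xi)|$ is bounded (moderate $\xi$), in which case $\Re(z(\xi)t(1-\psi(p)))\le|z(\xi)|\,t$ is bounded, or $\arg z(\xi)+\arg(1-\psi(p))$ differs from $\pm\tfrac{\pi}{2}$ by at most $O(e^{-|\xi|})$, whence $\cos(\arg z(\xi)+\arg(1-\psi(p)))\le C e^{-|\xi|}$ and $\Re(z(\xi)t(1-\psi(p)))\le|z(\xi)|\,t\cdot Ce^{-|\xi|}$ is again bounded; in all cases $\Re(z(\xi)t(1-\psi(p)))\le a_0t$ up to a bounded additive correction. With this pointwise bound the middle integrand is in $\mathbf{L}_{1,1}(D_d)$ with constants controlled by $e^{a_0t}\sup_{D_d^2}\|A^\chi f'\|$ only, and all remaining estimates are routine bookkeeping.
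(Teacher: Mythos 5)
Your proof is correct and follows essentially the same route as the paper's: the same term-by-term decomposition, the same invocations of \cref{prop:FCP_RLInt}, \cref{eq:FCP_RLint_NA}, \cref{thm:FCP_prop_appr} and Stenger's quadrature bounds, the same balancing of the $N_i$, $h_i$ to align all rates at $e^{-c\sqrt{\alpha\chi N}}$, and the same geometric argument (via $\arg z(\xi)\to\pm(\tfrac{\pi}{2}+d)$ and $|\arg(1-\psi(p))|<d$) that keeps $\Re\left(z(\xi)t(1-\psi(p))\right)$ bounded on $D_d$. The only immaterial difference is the order of the triangle inequality in the third term — you discretize the $\xi$-integral before the $p$-integral, while the paper splits off the outer $p$-quadrature error ($\eta_4$) first.
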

\begin{proof}
	We analyze error $\left \|u_{\mathrm{ih}}(t) - \wt{u}_{\mathrm{ih}}^N(t) \right \|$ of \eqref{eq:FCP_inhom_sol_appr} in a term-by-term manner.

	The first error term is estimated via \cref{eq:FCP_RLint_NA} and \cref{thm:FCP_prop_appr}, applied in succession:
	\begin{align*}
		 & \left\| J_\alpha S_{\alpha}(t)f(0) - \wt{J}_\alpha^{N_0}\wt{S}_{\alpha}^N(t)f(0) \right\|
		\leq \frac{ t^{\alpha}(\alpha + 1)}{\Gamma(\alpha+1)} \left\| S_{\alpha}(t)f(0) - \wt{S}_{\alpha}^N(t)f(0)\right\| \\
		 & +\frac{C_0 t^\alpha}{\Gamma(\alpha)} e^{- \sqrt{2 \pi d \varepsilon N_0} }
		\leq
		C {t^{\alpha} e^{a_0 t}} e^{-c\sqrt{\alpha\chi N}}\|A^{\chi}f(0)\|
		+ \frac{C_0 t^\alpha}{\Gamma(\alpha)} e^{- c\sqrt{\varepsilon N_0} }.
	\end{align*}

	The error bound for the second term can be decomposed as
	\vspace*{-8pt}
	\[	\left\|  \intl_0^t J_\alpha f'(s)\, ds  - h_1 \sum\limits_{k=-N_1}^{N_1}  \mathcal{G}_\alpha^{N_2}(0, t, kh_1)\right\|  \leq \eta_{1} + \eta_{2},
	\]
	with $\eta_1$ being the quadrature error of the outer integral:
	\vspace*{-8pt}
	\[
		\eta_1	 = \left\|  t\intl_{-\infty}^\infty \psi'(p) J_\alpha f'(t\psi(p))\, dp  - th_1 \sum\limits_{k=-N_1}^{N_1} \psi'(k h_1)J_\alpha f'(t\psi(k h_1)) \right\|,
	\]
	stated here after the substitution $s = \psi(p)$ is performed therein, whereas $\eta_2$ is the compound error of the discretized Riemann--Liouville operators:
	\vspace*{-8pt}
	\[\begin{aligned}
			\eta_2
			 & =  th_1 \sum\limits_{k=-N_1}^{N_1} \left\|  \psi'(k h_1) \left(J_\alpha f'(t\psi(k h_1)) - \wt{J}_\alpha^{N_2} f'(t\psi(k h_1)) \right) \right\| \\
			 & \leq t \max\limits_{s \in [0,t]}\left\|J_\alpha f'(s) - \wt{J}_\alpha^{N_2} f'(s) \right\| h_1\sum\limits_{k=-N_1}^{N_1}  \psi'(k h_1).
		\end{aligned}	\]
	It is worth noting that the last series is a specific version of the one from \eqref{eq:FCP_RLInt_biasederr1}, with $\alpha = 1$.
	Thus, formula \eqref{eq:FCP_RLint_sbound_alpha} along with the bound from \cref{prop:FCP_RLInt}, warranted by the analyticity assumptions on $f'(z)$, yield
	\begin{equation*}\label{key}
		\eta_2
		\leq 2t \max\limits_{s \in [0,t]}\left\|J_\alpha f'(s) - \wt{J}_\alpha^{N_2} f'(s) \right\|
		\leq \frac{C_2}{\Gamma(\alpha)}  t^{\alpha + 1} e^{- c\sqrt{\varepsilon N_2} }.
	\end{equation*}

	Let us return to  $\eta_1$.
	The aforementioned analyticity of $f'(z)$ induces the uniform convergence of the integral for $J_\alpha f'(z)$ in formula \eqref{eq:FCP_RLInt_R} with respect to $z \in D_d^2$.
	Furthermore, for an arbitrary value of $p \in (-\infty, \infty)$, the function $z\psi(p)$ from \eqref{eq:FCP_RLInt_R} maps the convex region $D_d^2$ defined by \eqref{eq:FCP_D_d_2}, onto itself.
	By repeating the argument from the Proof of \cref{prop:FCP_RLInt}, these two facts and the relation $D_d \xrightarrow{\psi} D_d^2$ permits us to conclude that $\psi'(p) J_\alpha f'(t\psi(p))$ is analytic for $p \in D_d$.
	Due to the form of $\psi'(p)$, it is also exponentially decaying as $|p| \rightarrow \infty$ in $D_d \subseteq \oC$, with the decay order $1$.
	Hence, the error of sinc-quadrature $\eta_1$ admits the bound established in Theorem 4.2.6 from \cite{Stenger1993}:
	\vspace*{-6pt}
	\[
		\eta_1 \leq  C_1 t e^{- c \sqrt{N_1} }.
	\]

	We treat the third term from \eqref{eq:FCP_inhom_sol_appr} in a similar way as the second term, albeit this time the error decomposition is conducted after the application of \eqref{eq:FCP_SO_cor_repr} in reverse:
	\[
		\begin{array}{rl}
			 & \left\|\intl_{-\infty}^\infty \!\!\frac{F_{\alpha,1}(\xi)}{2\pi i}\intl_0^t \!\! e^{z(\xi)(t-s)}  J_\alpha f'(s) \, ds d\xi
			- t\frac{h_3 h_4}{2\pi i}\sum\limits_{\ell =-N_3}^{N_3}\!\!\!F_{\alpha,1} (\ell h_3)\! \sum\limits_{k=-N_4}^{N_4} \!\!\!\mathcal{G}_\alpha^{N_5}(z(l h_3), t, kh_4)
			\right\|                                                                                                                       \\
			 & =\left\|
			t\intl_{-\infty}^\infty \psi'(p) \left( S_{\alpha}(t - t\psi(p)) - I\right) J_\alpha f'(t\psi(p)) \, dp
			\right .                                                                                                                       \\
			 & \left .
			\hspace*{7em}
			-t\frac{h_3 h_4}{2\pi i} \suml_{k=-N_4}^{N_4} \suml_{\ell =-N_3}^{N_3}\!\! F_{\alpha,1} (\ell h_3)\mathcal{G}_\alpha^{N_5}(z(\ell h_3), t, kh_4)
			\right\|  \leq \eta_3 + \eta_4 + \eta_5 .
		\end{array}\]

	\pagebreak
	\noindent
	Here, the quantity $\eta_4$ is used to denote the quadrature error for the outer integral:
	\vspace*{-8pt}
	\begin{equation}\label{eq:FCP_inhom_sol_appr_eta3}
		\begin{aligned}
			\eta_4
			 & = \left\|
			t\intl_{-\infty}^\infty  \psi'(p) \left( S_{\alpha}(t - t\psi(p)) - I\right) J_\alpha f'(t\psi(p)) \, dp
			\right .                                                                                                                       \\
			 & \left.
			\hspace*{4em}
			- t h_4 \suml_{k=-N_4}^{N_4} \psi'(k h_4 )\left( S_{\alpha}(t - t\psi(k h_4)) - I\right) J_\alpha f'(t\psi(k h_4 )) \right \|. \\
		\end{aligned}
	\end{equation}
	The upper bound for the last integrand is determined by the properties of the norm
	$
		\left\| \left( S_{\alpha}(t - s) - I\right) J_\alpha f'(s) \right\|
		= \frac{1}{2\pi} \left\| \int_{-\infty}^\infty e^{z(\xi)(t-s)} F_{\alpha,1}(\xi) J_\alpha f'(s) \, d \xi\right\|,
	$
	which can be estimated using inequality \eqref{eq:FCP_SO_exp_cor_repr_norm_est_der}.
	Indeed, setting $x_1 = J_\alpha f'(s)$  in \eqref{eq:FCP_SO_cor_sinc_quad} reveals that the above integrand equals to the expression for $\cF_{\alpha,1}(t-s, \xi)$.
	As such, it admits the estimate
	\[
		\begin{aligned}
			\left\| e^{z(\xi)(t-s)} F_{\alpha,1}(\xi)J_\alpha f'(s)\right\|
			\leq
			C_{\alpha,1}(\chi, 0)
			e^{\Re{(z(\xi)(t-s))} - \alpha\chi|\xi|}\|A^{\chi} J_\alpha f'(s)\|                                                   \\
			\leq
			\frac{C_{\alpha,1}(\chi, 0)}{\Gamma(\alpha)}
			e^{\Re{(z(\xi)(t-s))} - \alpha\chi|\xi|}
			\int\limits_{-\infty}^{\infty} \frac{s^\alpha e^p}{(1+e^p)^{\alpha + 1}}\left\| A^{\chi} f'(s\psi(p))\right \|  \, dp \\
			\leq
			\frac{C_{\alpha,1}(\chi, 0) s^\alpha}{\Gamma(\alpha+1)}
			e^{\Re{(z(\xi)(t-s))} - \alpha\chi|\xi|}
			\sup\limits_{p \in \R}\left\| A^{\chi}f'(s\psi(p))\right \|.
		\end{aligned}\]
	Here, we used the relation
	\begin{equation}\label{eq:FCP_inhom_sol_appr_proof1}
		\int\limits_{-\infty}^{\infty} \frac{s^\alpha e^p}{(1+e^p)^{\alpha + 1}}
		= \int\limits_0^s (s-p)^{\alpha-1}dp
		= \frac{s^\alpha}{\alpha} ,
	\end{equation}
	stemming from the equivalence of definitions \eqref{eq:FCP_RLInt_R} and \eqref{eq:FCP_RLInt}.
	The previous chain of estimates leads us to the following bound:
	\vspace*{-6pt}
	\begin{equation}\label{eq:FCP_inhom_sol_appr_eta3_1}
		\begin{aligned}
			\left\| \left( S_{\alpha}(t - s) - I\right) J_\alpha f'(s) \right\|
			\leq \frac{1}{2\pi} \intl_{-\infty}^\infty \left\| e^{z(\xi)(t-s)}  F_{\alpha,1}(\xi) J_\alpha f'(s)\right\| \, d \xi &   & \\
			\leq
			\frac{C_{\alpha,1}(\chi, 0) s^\alpha }{2\pi \Gamma(\alpha+1)}
			\sup\limits_{p \in \R}\left\| A^{\chi}f'(s\psi(p))\right \|
			\intl_{-\infty}^\infty e^{\Re{(z(\xi)(t-s))}  - \alpha\chi|\xi|} \, d \xi                                             & . & \\
		\end{aligned}
	\end{equation}

	The norm $\left\| \left( S_{\alpha}(t - s) - I\right) J_\alpha f'(s) \right\|
	$ remains bounded as long as the last integral converges.
	Since $s = t\psi(p)$ in the expression for $\eta_4$, this convergence requirement translates into the inequality $\Re{(z(\xi)(t-t\psi(p)))} \leq 0$, which needs to be valid as $|\xi|,|p| \rightarrow \infty$.
	Let $p = w + i \nu \in D_{d'}$; then, using the {definition of $a_I$, $b_I$ from \eqref{eq:FCP_hyp_cont_par_final} along with the identity $\Arg{\left( 1 - \psi(w+i \nu)\right)}=-\arctan{\frac{e^w \sin{\nu}}{1 + e^w\cos{\nu}}}$,} we rewrite the inequality in terms of the complex number arguments
	\vspace*{-6pt}
	\[
		\begin{aligned}
			\frac{\pi}{2} & \geq \lim\limits_{\substack{\xi \to \infty \\ w \to \infty}}
			\Arg{(z(\xi)(t-t\psi(w + i \nu)))}%
			= \lim\limits_{\xi \to \infty}\Arg(z(\xi)) +
			\lim\limits_{w  \to \infty}
			\Arg(1-\psi(w + i \nu))                                    \\
			              & = \frac{\phi_s}{2} + \frac{\pi}{4}
			- \lim\limits_{w \to \infty}
			\arctan{\frac{e^w \sin{\nu}}{1 + e^w\cos{\nu}}}%
			= \frac{\phi_s}{2} + \frac{\pi}{4} - \nu .
		\end{aligned}
	\]

	\pagebreak
	\noindent
	Therefore, the integrand from \eqref{eq:FCP_inhom_sol_appr_eta3} is analytic in $D_{d'}$,
	with the value of $d' =  \frac{\phi_s}{2} - \frac{\pi}{4}$, which is equal to $d$ from \eqref{eq:FCP_hyp_cont_par_final}.
	Furthermore, the integrand's norm is exponentially decaying as a function of $p \in D_{d}$, as a consequence of \eqref{eq:FCP_inhom_sol_appr_eta3_1}, the boundedness of $\sup\limits_{z \in D_d^2}\left\| A^{\chi}f'(z)\right \|$ and the convergence of the integral, established before.
	Hence, similarly to $\eta_1$, the bound \mbox{from Theorem 4.2.6 of \cite{Stenger1993} } yields
	\vspace*{-6pt}
	\[
		\eta_4 \leq  \frac{C_4 t}{\alpha \chi} e^{- c \sqrt{N_4} }.
	\]

	Next, we use \eqref{eq:FCP_SO_cor_err_est} in conjunction with \eqref{eq:FCP_RLint_sbound_alpha}  and \eqref{eq:FCP_inhom_sol_appr_proof1} to estimate the propagator approximation error $\eta_3$
	\vspace*{-6pt}
	\[
		\begin{aligned}
			\eta_3
			 & = t h_4 \left\|
			\suml_{k=-N_4}^{N_4} \psi'(k h_4 )\left( S_{\alpha}(t - t\psi(k h_4)) - I\right) J_\alpha f'(t\psi(k h_4 )) \right.                        \\
			 & \left .
			\hspace*{1.2em}
			-\frac{h_3 }{2\pi i} \suml_{k=-N_4}^{N_4} \psi'(k h_4 )\suml_{\ell =-N_3}^{N_3} e^{t z(\ell h_3)  (1-\psi(k h_4))}F_{\alpha,1} (\ell h_3)J_\alpha f'(t\psi(k h_4 ))
			\right\|                                                                                                                                   \\
			 & \leq t h_4 \suml_{k=-N_4}^{N_4} |\psi'(k h_4 )|
			\left\| \frac{1}{2\pi i} \intl_{-\infty}^\infty e^{t z(\xi) (1-\psi(k h_4))}F_{\alpha,1}(\xi) J_\alpha f'(t\psi(k h_4 )) \right.           \\
			 & \left.
			\hspace*{8em}
			- \frac{h_3}{2\pi i}\suml_{\ell =-N_3}^{N_3} e^{t z(\ell h_3) (1-\psi(k h_4))}F_{\alpha,1} (\ell h_3)J_\alpha f'(t\psi(k h_4 ))
			\right\|                                                                                                                                   \\
			 & \leq  {C'_3} {te^{a_0 t}}e^{-c\sqrt{\alpha \chi N_3}} h_4 \suml_{k=-N_4}^{N_4} |\psi'(k h_4 )| 	 \|A^{\chi} J_\alpha f'(t\psi(k h_4 )\| \\
			 & \leq \frac{2{C'_3} {t^{\alpha+1} e^{a_0 t}}}{\Gamma(\alpha)}   e^{-c\sqrt{\alpha \chi N_3}}
			\sup\limits_{p \in \R}\left\| A^{\chi} f'(t\psi(p))\right \|
			\int\limits_{-\infty}^{\infty} \frac{ e^p}{(1+e^p)^{\alpha + 1}}\, dp                                                                      \\
			 & \leq \frac{2{C'_3} {t^{\alpha+1} e^{a_0 t}}}{\Gamma(\alpha + 1)}   e^{-c\sqrt{\alpha \chi N_3}}
			\sup\limits_{p \in [0,t]}\left\| A^{\chi} f'(p)\right \|
			\leq C_3 t^{\alpha+1} e^{a_0 t} e^{-c\sqrt{\alpha \chi N_3}}
			\sup\limits_{p \in [0,t]}\left\| A^{\chi} f'(p)\right \| .
		\end{aligned}
	\]

	The remaining summand $\eta_5$ represent the effect of discretized Riemann--Liouville operators on the error of the third approximation term in \eqref{eq:FCP_inhom_sol_appr}.
	We estimate it as
	\vspace*{-6pt}
	\[\begin{aligned}
			\eta_5 & =
			\frac{t h_3 h_4}{2\pi}
			\left\| \suml_{k=-N_4}^{N_4}\!\!\! \psi'(k h_4 )
			\suml_{\ell =-N_3}^{N_3}\!\!\! e^{t z(\ell h_3) (1-\psi(k h_4))}
			F_{\alpha,1} (\ell h_3)J_\alpha f'(t\psi(k h_4 ))
			\right.       \\
			       &
			\hspace*{4em}
			\left.
			- \suml_{k=-N_4}^{N_4} \!\!\!\psi'(k h_4 )
			\suml_{\ell =-N_3}^{N_3}\!\!\! e^{t z(\ell h_3) (1-\psi(k h_4))}
			F_{\alpha,1} (\ell h_3)\wt{J}_\alpha^{N_5}  f'\left ( t\psi(k h_4) \right )
			\right\|      \\
			       & \leq
			\frac{t h_3 h_4}{2\pi} \!\!\!
			\suml_{k=-N_4}^{N_4}\!\!\! |\psi'(k h_4 )|
			\suml_{\ell =-N_3}^{N_3} \!
			\left| e^{t z(\ell h_3) (1-\psi(k h_4))}\right|
			\\&
			\hspace*{5em}
			\times
			\left\|F_{\alpha,1} (\ell h_3)
			\left(
			J_\alpha f'(t\psi(k h_4)) - \wt{J}_\alpha^{N_5}  f'\left ( t\psi(k h_4) \right )
			\right)
			\right\|      \\
		\end{aligned}\]

	\pagebreak
	\noindent
	\[\begin{aligned}
			 & \leq
			\frac{(1+M)K b_I2^{\alpha\chi}h_3 h_4}
			{2\pi (a_I-a_0)}
			te^{t \Re{z(0)}}
			\suml_{k=-N_4}^{N_4}\!\!\! |\psi'(k h_4 )|
			\suml_{\ell =-N_3}^{N_3}
			\frac{e^{- \alpha\chi |\ell h_3|}}
			{r^{\chi}(\ell h_3,0)} \\
			 &
			\hspace*{5em}
			\times\left\|
			A^{\chi}
			\left( J_\alpha f'(t\psi(k h_4 )) - \wt{J}_\alpha^{N_5}  f'\left ( t\psi(k h_4) \right ) \right)
			\right \|.
		\end{aligned}\]
	Assumption \eqref{eq:FCP_rhs_reg_strip} enables us to estimate the last norm via \cref{prop:FCP_RLInt}:
	\begin{multline*}
		\left\|
		A^{\chi} \!\!
		\left( J_\alpha f'(t\psi(k h_4 )) - \wt{J}_\alpha^{N_5}  f'\left ( t\psi(k h_4) \right ) \right)
		\right \| \\
		\leq
		\left\|
		J_\alpha A^{\chi}\! f'(t\psi(k h_4 )) - \wt{J}_\alpha^{N_5}  A^{\chi} f'\left ( t\psi(k h_4)  \right)
		\right \|\\
		\leq
		\frac{C(\alpha, \chi) (t\psi(k h_4))^\alpha}{\Gamma(\alpha)} e^{- \sqrt{2 \pi d \varepsilon N_5} }
		\leq
		\frac{C(\alpha, \chi) t^\alpha}{\Gamma(\alpha)} e^{- \sqrt{2 \pi d \varepsilon N_5} }.
	\end{multline*}
	This decouples the inner and outer series in the above estimate for $\eta_5$.
	Thus,
	\[\begin{aligned}
			\frac{h_3}{2}\suml_{\ell =-N_3}^{N_3}
			\!\!\frac{e^{- \alpha\chi |k h_3|}}
			{r^{\chi}(k h_3,0)}
			 & \leq
			\frac{h_3}{2r_m^{\chi}} \left(
			\frac{1 - e^{- \alpha\chi N_3 h_3}}{1 - e^{- \alpha\chi h_3}}
			+ \frac{e^{- \alpha\chi h_3}(1 - e^{- \alpha\chi N_3 h_3})}{1 - e^{- \alpha\chi h_3}}
			\right) \\
			 & \leq
			\frac{h_3}{2r_m^{\chi}} \frac{1 + e^{- \alpha\chi h_3}}{1 - e^{- \alpha\chi h_3}}
			\leq \frac{1}{\alpha \chi r_m^{\chi}} ,
		\end{aligned}\]
	with $r_m = \inf\limits_{p \in \R} r_0(p)$ being strictly greater than zero, due to \eqref{eq:FCP_r}.
	By combining two previously obtained bounds with \eqref{eq:FCP_RLint_sbound_alpha}, we arrive at
	\[\begin{aligned}
			\eta_5
			 & \leq
			C'_5  \frac{C(\alpha, \chi) }
			{\chi \Gamma(\alpha+1)}  t^{\alpha + 1}e^{t (a_0 - a_I)}
			e^{- c \sqrt{\varepsilon N_5} }
			\leq
			C_5   \frac{t^{\alpha + 1}}{\chi}e^{t (a_0 - a_I)}
			e^{- c \sqrt{\varepsilon N_5} }.
		\end{aligned}\]
	The constant $C'_5 = \frac{(1+M)K b_I 2^{\alpha\chi + 1} }
		{\pi (a_I - a_0) } $, here,  is independent of $N_5$.

	The bounds derived for the quantities $\eta_i$, $i = 1, \ldots, 5$ show that they all are exponentially decaying as $N_i \to \infty$.
	We make these error bounds asymptotically equal to the error of the first term from \eqref{eq:FCP_inhom_sol_appr}, that is decaying on the order of $e^{-c\sqrt{\alpha\chi N}}$, provided that $\varepsilon N_0 = \alpha\chi N$ in the error estimate from the beginning of the proof.
	The resulting equations for $N_i$ are as~follows:
	\[
		N_1 = \alpha\chi N,
		\quad
		\varepsilon N_2 = \alpha\chi N,
		\quad
		\alpha\chi N_3 = \alpha\chi N,
		\quad
		N_4 = \alpha\chi N,
		\quad
		\varepsilon N_5 = \alpha\chi N,
	\]
	where $\varepsilon = \min\left \{1, \alpha\right \}$, as per \cref{prop:FCP_RLInt}.
	The solution of these equations gives us
	\eqref{eq:FCP_inhom_sol_N}.
	By collecting the derived error bounds for the terms of $\wt{u}_{\mathrm{ih}}^N(t)$, we end up with
	\begin{multline*}
		\left \|u_{\mathrm{ih}}(t) - \wt{u}_{\mathrm{ih}}^N(t) \right \| \leq
		\left(
		C t^{\alpha} e^{a_0 t} \|A^{\chi}f(0)\|
		+ \frac{C_0 t^\alpha}{\Gamma(\alpha)}
		+ C_1 t + \frac{C_4}{\alpha \chi}  t
		\right.\\
		\left.
		+ \frac{C_2}{\Gamma(\alpha)} t^{\alpha + 1}
		+ C_3 t^{\alpha+1} e^{a_0 t}
		\sup\limits_{p \in [0,t]}\left\| A^{\chi} f'(p)\right \|
		+C_5   \frac{t^{\alpha + 1}}{\chi}e^{a_0 t}
		\right) e^{-c\sqrt{\alpha\chi N}}.
	\end{multline*}
	This bound is reduced to \eqref{eq:FCP_inhom_sol_err_est}  by absorbing the individual constants into $C_{\chi, f}$, while retaining the asymptotic behavior with respect to $\alpha$, $\chi$ and $t$.
	The derived bound also proves the convergence of approximation \eqref{eq:FCP_inhom_sol_appr} to \eqref{eq:FCP_inhom_cor_repr} and, therefore, to the original definition for the inhomogeneous part of the solution given by \eqref{eq:FCP_hom_inhom}.
\end{proof}

\Cref{thm:FCP_inhom_sol_err_est} demonstrates that the proposed numerical method to approximate $u_{\mathrm{ih}}(t)$ inherits essential properties of the numerical method for propagator approximation,
it is based upon.
Firstly, the constructed approximation $\wt{u}_{\mathrm{ih}}^N(t)$ is exponentially convergent on the whole interval $t \in [0, T]$.
Secondly, bound \eqref{eq:FCP_inhom_sol_err_est} exhibits, similar to, \eqref{eq:FCP_SO_cor_err_est} dependence on the fractional order $\alpha$ and the argument smoothness parameter $\chi$.
Thirdly, just like~\eqref{eq:FCP_inhom_cor_repr}, formula~\eqref{eq:FCP_inhom_sol_appr} permits for an independent evaluation of resolvents $R(z^\alpha, -A)$ for the different values $z \in \Gamma_I$.
Moreover, the presence of factor $t^\alpha$ in \eqref{eq:FCP_inhom_sol_err_est} guaranties that the approximation $\wt{u}_{\mathrm{ih}}^N(t)$ matches the asymptotic behavior of the inhomogeneous part ${u}_{\mathrm{ih}}(t)$, when \mbox{$t \to 0+$ \cite{SytnykWohlmuth2023}.}
\begin{algorithm}[h!tb]
	\renewcommand{\algorithmicrequire}{\textbf{INPUT:}}
	\renewcommand{\algorithmicensure}{\textbf{OUTPUT:}}
	\begin{algorithmic}[1]
		\small
		\REQUIRE{\hspace{2em}  $\alpha, f(t)$, $t_k$, $\varphi_s$, $N, \chi$}
		\ENSURE{\hspace{0.8em}  $\left\{\wt{u}_\mathrm{ih}^N(t_k)\right\}$}
		\STATE{Calculate $a_I, b_I$ and $N_i$, $h,h_i$ by \eqref{eq:FCP_hyp_cont_par_final} and \eqref{eq:FCP_inhom_sol_N}, \eqref{eq:FCP_inhom_cor_h} }
		\FOR{$m=-N$ \TO $N$}
		\STATE{Solve $(z(m h_1)^\alpha I+A)v = f(0)$}
		\STATE{$F_{1,m}:=  z(m h_1)^{\alpha-1}v - \frac{1}{z(m h_1)}f(0)$}
		\ENDFOR
		\STATE{$M_1 := \left \lceil N_0\min\left \{1, \alpha\right \} \right \rceil$; $M_2 := \left \lceil N_0 \min\left \{\frac{1}{\alpha}, 1\right \} \right \rceil $}
		\FOR{each $t_k$}
		\STATE{
		$\wt{u}_\mathrm{ih}^N(t_k) := \frac{t^{\alpha}h_0}{\Gamma(\alpha)}\!\! \suml_{ \ell=-M_1}^{M_2}\!\! \frac{e^{\ell h_0}}{(1+e^{\ell h_0})^{\alpha + 1}}\!\! \left(f(0) + \frac{h}{2 \pi i}\!\!\suml_{m=-N}^{N}\!\!\!\! z'(m h) {e^{t_k \psi(\ell h_0) z(m h)}} F_{1,m}\right)$
		}
		\ENDFOR
		\FOR{each $t_k$}
		\STATE{$M_1 := \left \lceil N_2\min\left \{1, \alpha\right \} \right \rceil$; $M_2 := \left \lceil N_2 \min\left \{\frac{1}{\alpha}, 1\right \} \right \rceil $}
		\STATE{%
		$\wt{u}_\mathrm{ih}^N(t_k) := \wt{u}_\mathrm{ih}^N(t_k)\!
			+ \!\frac{ h_1 h_2}{\Gamma(\alpha)}t_k^{\alpha + 1} \!\!\!\suml_{\ell=-N_1}^{N_1}\!\!\!\! \psi'(\ell h_1) \psi^\alpha(\ell h_1)\!\!\suml_{m=-M_1}^{M_2}\!\!\!\frac{e^{m h_2}f'\left ( t_k \psi(\ell h_1)\psi(m h_2) \right )}{(1+e^{m h_2})^{\alpha + 1}}$
		}
		\ENDFOR
		\FOR{each $t_k$}
		\STATE{$M_1 := \left \lceil N_5\min\left \{1, \alpha\right \} \right \rceil$; $M_2 := \left \lceil N_5 \min\left \{\frac{1}{\alpha}, 1\right \} \right \rceil $}
		\FOR{$m=-N_3$ \TO $N_3$}
		\STATE{%
		$f_m :=\frac{ h_4 h_5}{\Gamma(\alpha)}t_k^{\alpha + 1} \!\!\! \suml_{\ell=-N_4}^{N_4}\!\!\! \psi'(\ell h_4) \psi^\alpha(\ell h_4)e^{t_k z(m h_3) (1-\psi(\ell h_4))} \!\!\! \suml_{j=-M_1}^{M_2}\!\!\!\!\frac{e^{j h_5}f'\left ( t_k\psi(\ell h_4)\psi(j h_5) \right )}{(1+e^{j h_5})^{\alpha + 1}}$
		}
		\STATE{Solve $(z(m h_3)^\alpha I+A)v = f_m$}
		\STATE{$F_{1,m}:=  z(m h_3)^{\alpha-1}v - \frac{1}{z(m h_3)}f_m$}
		\ENDFOR
		\STATE{%
			$\wt{u}_\mathrm{ih}^N(t_k) := \wt{u}_\mathrm{ih}^N(t_k) + \frac{h_3}{2\pi i}\sum\limits_{m =-N_3}^{N_3} z'(m h_3) F_{1,m}$}

		\ENDFOR
		\RETURN{$\{\wt{u}_\mathrm{ih}^N(t_k)\}$}
	\end{algorithmic}
	\caption{Algorithm for computing the inhomogeneous part approximation $\wt{u}_\mathrm{ih}^N(t)$.%
		\label{alg:FCP_inhom_sol_appr}%
	}
\end{algorithm}
\FloatBarrier
\begin{remark}
	In order to minimize the computational cost required for the evaluation of the approximation $\wt{u}_{\mathrm{ih}}^N(t)$, one can dynamically adjust the discretization parameters $N_2, N_5$ of the approximate Riemann--Liouville integrals from \eqref{eq:FCP_inhom_sol_appr} using formula \eqref{eq:FCP_RLInt_appr_N_adj} with $t_0$ equal to $t\psi(kN_1)$ and $t\psi(kN_4)$,~accordingly.
\end{remark}

The following example is aimed to numerically verify the quality of approximation \eqref{eq:FCP_inhom_sol_appr} to the inhomogenous part of solution given by \eqref{eq:FCP_inhom_cor_repr} or \eqref{eq:FCP_hom_inhom}.
\begin{example}\label{ex:FCP_ex2_inhom_R_eigenfunction}
	Let $A$ be defined as in \cref{ex:FCP_ex1_hom_R_eigenfunction}.
	Furthermore, let $f$ from \eqref{eq:FCP_DE} be a product of the eigenfunction of $A$ and the polynomial:
	\begin{equation}\label{eq:FCP_ex2_f}
		f(t) =  \suml_{i = 0}^m c_i t^i \sin{\frac{\pi k_i x}{L}} ,
	\end{equation}
	where $m$ and $c_i$, $i={0, \ldots, m}$ are given.
	For such $f$, we have
	\[
		J_\alpha f'(t) =  \suml_{i = 0}^m  c'_i t^{\alpha + i - 1} \sin{\frac{\pi k_i x}{L}}, \quad c'_i  = \frac{  \Gamma(i+1)}{\Gamma(\alpha+i)} c_i.
	\]
	The inhomogeneous part of the solution to \eqref{eq:FCP_DE}, \eqref{eq:FCP_BC} takes the form
	\[
		\begin{aligned}
			u(t,x)
			= & c'_0 \sin{\frac{\pi k_0 x}{L}} \intl_0^t \!\! \frac{E_{\alpha,1}(-s^\alpha \lambda(k) ) }{(t-s)^{1-\alpha}} \, ds
			\\&
			+  \suml_{i = 1}^m  c'_i \sin{\frac{\pi k_i x}{L}}  \intl_0^t \!\! E_{\alpha,1}(-(t - s)^\alpha \lambda(k) )  s^{\alpha + i - 1} \, ds, \\
		\end{aligned}
	\]
	which is derived using the fractional propagator representation from \cref{ex:FCP_ex1_hom_R_eigenfunction}.
	The integrals from the above formula for $u(t)$ cannot be evaluated explicitly for arbitrary $\alpha$.
	Thus, we rely upon the numerical evaluation of  $u(t)$ via exponentially convergent quadrature formulas \eqref{eq:FCP_RLInt_R} and \cite[Theorem 4.2.6]{Stenger1993} with discretization parameters $N_J$ and $N_I$, correspondingly.
	The analysis conducted in the Proof of \cref{prop:FCP_RLInt} suggests to set $N_J = N_I/\min\{1,\alpha\}$.
	This leaves us with only one discretization parameter $N_I$, which has to be chosen large enough for the error of the approximated $u(t) $ to be negligible with respect to the error of the numerical solution $\wt{u}_{\mathrm{ih}}^N(t)$.
	The latter one is obtained by \cref{alg:FCP_inhom_sol_appr} for the data specified in \eqref{eq:FCP_ex1_hom_A}, \eqref{eq:FCP_ex1_hom_IV} and \eqref{eq:FCP_ex2_f}, using the explicit resolvent evaluation formula from \cref{ex:FCP_ex1_hom_R_eigenfunction} and the software implementation mentioned there.
	We fix $m = 1$, $c_0 = 1$, $c_1 = 1$, $k_0  = 1$, $k_1 = 4$, $L = 1$  in \eqref{eq:FCP_ex2_f} and, after conducting several numerical experiments, settle with $N_I = 256$.
	The resulting behavior of $u(t)$ is visualized in \cref{fig:FCP_Ex2_ex_vs_appr_inhomsol}.
	\begin{figure}[h!tb]
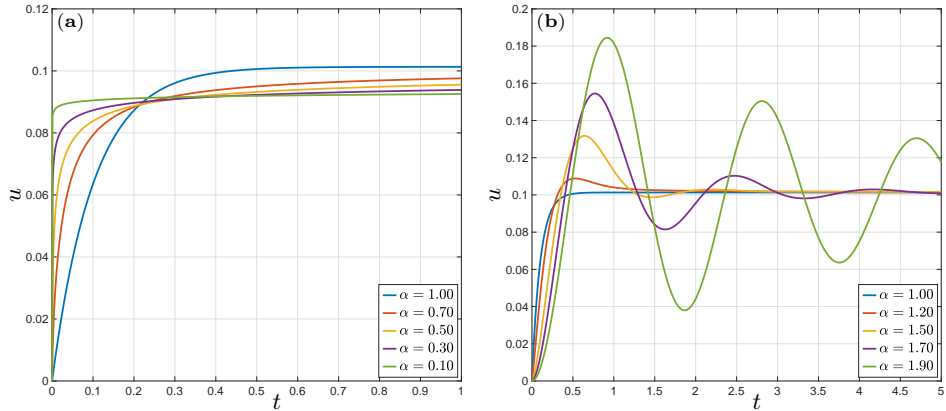

		\vspace{-9pt}
		\newdimen\lentwosubfig
		\lentwosubfig=0.48\linewidth
		\iflatexml
			\includegraphics[width=0.98\lentwosubfig, viewport=80 60 1100 1000, clip=true]%
			{Ex2/FCP_inhom_sol_ex_vs_t_alpha_0.1_0.3_0.5_0.7_1_nf0_1_ndf_4}
			\includegraphics[width=0.98\lentwosubfig, viewport=80 60 1100 1000, clip=true]%
			{Ex2/FCP_inhom_sol_ex_vs_t_alpha_1_1.2_1.5_1.7_1.9_nf0_1_ndf_4}
		\else
			\hspace*{0.4em}
			\begin{overpic}[width=0.98\lentwosubfig, viewport=80 60 1100 1000, clip=true]%
				{Ex2/FCP_inhom_sol_ex_vs_t_alpha_0.1_0.3_0.5_0.7_1_nf0_1_ndf_4}
				\put(55,-1){\small $t$}
				\put(-2,44){\small \rotatebox{90}{$u$}}
				\put(8,82){\scriptsize ({\bf a})}
			\end{overpic}\hfil
			\begin{overpic}[width=0.98\lentwosubfig, viewport=80 60 1100 1000, clip=true]%
				{Ex2/FCP_inhom_sol_ex_vs_t_alpha_1_1.2_1.5_1.7_1.9_nf0_1_ndf_4}
				\put(55,-1){\small $t$}
				\put(-2,44){\small \rotatebox{90}{$u$}}
				\put(8,82){\scriptsize ({\bf b})}
			\end{overpic}
		\fi
		\caption[Example 2: Exact and approximate inhomogeneous solution]{Exact solution $u(t,0.5)$ of problem \eqref{eq:FCP_DE}, \eqref{eq:FCP_BC} with $f(t) = \sin{\pi x} + t \sin{4\pi x}$, $u_0 = u_1 = 0$  and $A$, defined by \eqref {eq:FCP_ex1_hom_A} with $a = 1$, $N_I = 256$:  {\bf (a)} the case $\alpha = 0.1, 0.3, 0.5, 0.7, 1$; {\bf (b)} the case $\alpha = 1, 1.2, 1.5, 1.7, 1.9$.}%
		\label{fig:FCP_Ex2_ex_vs_appr_inhomsol}
	\end{figure}

	To measure the error of numerical solution $\wt{u}_{\mathrm{ih}}^N(t)$, we define
	\[
		\cE_{\mathrm{ih}}(t,x) = \left| u(t,x) - \wt{u}_{\mathrm{ih}}^N(t,x) \right|,
		\quad \mathrm{err}_{\mathrm{ih}} =  \sup\limits_{t \in [0, T]} 	\left\|  \cE_{\mathrm{ih}}(t)\right\|_\infty.
	\]
	and plot the values of $\cE_{\mathrm{ih}}(t,x)$ as a function of $t$ for fixed $x = 0.5$ and different values of $\alpha$, $N$ in \cref{fig:FCP_Ex2_ex_err_vs_t}.
	It has exactly the same structure as \cref{fig:FCP_Ex1_ex_err_vs_t} for the homogeneous case.
	The top row of plots in \cref{fig:FCP_Ex2_ex_err_vs_t} correspond to the case when $\alpha \leq 1$.
	Taking into account the monotonous behavior of exact solution $u(t)$ for such $\alpha$, here we consider $t \in [0,1]$.
	For the bottom row of plots  with $\alpha \geq 1$, we choose the larger time horizon $T = 5$.
	According to \cref{fig:FCP_Ex2_ex_err_vs_t}~(\textbf{b}), with such $T$ the numerical simulation will cover at least two full solution oscillation periods.
	In both cases, we observe that the maximum of the error $\alpha$-wise goes down when $N$ increases from one subplot to the next in line.
	\begin{figure}[h!tb]
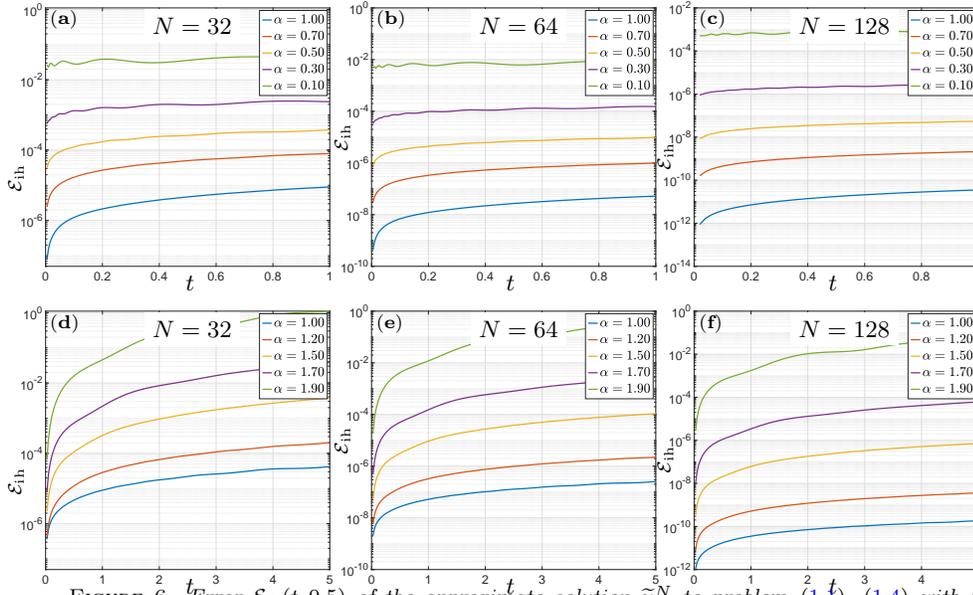

		\newdimen\lenthreesubfig
		\lenthreesubfig=0.33\linewidth
		\iflatexml
			\includegraphics[width=0.98\lenthreesubfig, viewport=00 50 1010 960, clip=true]%
			{Ex2/3x2/FCP_ihnom_logerr_vs_t_alpha_0.1_0.3_0.5_0.7_1_nf0_1_ndf_4_N_32}
			\includegraphics[width=0.98\lenthreesubfig, viewport=00 50 1010 960, clip=true]%
			{Ex2/3x2/FCP_ihnom_logerr_vs_t_alpha_0.1_0.3_0.5_0.7_1_nf0_1_ndf_4_N_64}
			\includegraphics[width=0.98\lenthreesubfig, viewport=00 50 1010 960, clip=true]%
			{Ex2/3x2/FCP_ihnom_logerr_vs_t_alpha_0.1_0.3_0.5_0.7_1_nf0_1_ndf_4_N_128}
		\else
			\begin{overpic}[width=0.98\lenthreesubfig, viewport=00 50 1010 960, clip=true]%
				{Ex2/3x2/FCP_ihnom_logerr_vs_t_alpha_0.1_0.3_0.5_0.7_1_nf0_1_ndf_4_N_32}
				\put(39,78){\colorboxo{white}{\small $N = 32$} }
				\put(-4,28){\scriptsize \rotatebox{90}{$\cE_{\mathrm{ih}}$}}
				\put(52,-3){\small $t$}
				\put(9,81){\scriptsize ({\bf a})}
			\end{overpic}
			\hspace*{-0.2em}
			\begin{overpic}[width=0.98\lenthreesubfig, viewport=00 50 1010 960, clip=true]%
				{Ex2/3x2/FCP_ihnom_logerr_vs_t_alpha_0.1_0.3_0.5_0.7_1_nf0_1_ndf_4_N_64}
				\put(41,78){\colorboxo{white}{\small $N = 64$} }
				\put(-2,41){\scriptsize \rotatebox{90}{$\cE_{\mathrm{ih}}$}}
				\put(52,-3){\small $t$}
				\put(11,81){\scriptsize ({\bf b})}
			\end{overpic}
			\hspace*{-0.1em}
			\begin{overpic}[width=0.98\lenthreesubfig, viewport=00 50 1010 960, clip=true]%
				{Ex2/3x2/FCP_ihnom_logerr_vs_t_alpha_0.1_0.3_0.5_0.7_1_nf0_1_ndf_4_N_128}
				\put(40,78){\colorboxo{white}{\small $N = 128$}}
				\put(-2,35){\scriptsize \rotatebox{90}{$\cE_{\mathrm{ih}}$}}
				\put(52,-3){\small $t$}
				\put(11,81){\scriptsize ({\bf c})}
			\end{overpic}%
		\fi
		\\[6pt]
		\iflatexml
			\includegraphics[width=0.98\lenthreesubfig, viewport=00 50 1010 960, clip=true]%
			{Ex2/3x2/FCP_inhom_logerr_vs_t_alpha_1_1.2_1.5_1.7_1.9_nf0_1_ndf_4_N_32}
			\includegraphics[width=0.98\lenthreesubfig, viewport=00 50 1010 960, clip=true]%
			{Ex2/3x2/FCP_inhom_logerr_vs_t_alpha_1_1.2_1.5_1.7_1.9_nf0_1_ndf_4_N_64}
			\includegraphics[width=0.98\lenthreesubfig, viewport=00 50 1010 960, clip=true]%
			{Ex2/3x2/FCP_inhom_logerr_vs_t_alpha_1_1.2_1.5_1.7_1.9_nf0_1_ndf_4_N_128}
		\else
			\begin{overpic}[width=0.98\lenthreesubfig, viewport=00 50 1010 960, clip=true]%
				{Ex2/3x2/FCP_inhom_logerr_vs_t_alpha_1_1.2_1.5_1.7_1.9_nf0_1_ndf_4_N_32}
				\put(39,78){\colorboxo{white}{\small $N = 32$} }
				\put(-4,28){\scriptsize \rotatebox{90}{$\cE_{\mathrm{ih}}$}}
				\put(52,-3){\small $t$}
				\put(9,81){\scriptsize ({\bf d})}
			\end{overpic}
			\hspace*{-0.2em}
			\begin{overpic}[width=0.98\lenthreesubfig, viewport=00 50 1010 960, clip=true]%
				{Ex2/3x2/FCP_inhom_logerr_vs_t_alpha_1_1.2_1.5_1.7_1.9_nf0_1_ndf_4_N_64}
				\put(41,78){\colorboxo{white}{\small $N = 64$} }
				\put(-2,41){\scriptsize \rotatebox{90}{$\cE_{\mathrm{ih}}$}}
				\put(52,-3){\small $t$}
				\put(11,81){\scriptsize ({\bf e})}
			\end{overpic}
			\hspace*{-0.1em}
			\begin{overpic}[width=0.98\lenthreesubfig, viewport=00 50 1010 960, clip=true]%
				{Ex2/3x2/FCP_inhom_logerr_vs_t_alpha_1_1.2_1.5_1.7_1.9_nf0_1_ndf_4_N_128}
				\put(40,78){\colorboxo{white}{\small $N = 128$}}
				\put(-2,35){\scriptsize \rotatebox{90}{$\cE_{\mathrm{ih}}$}}
				\put(52,-3){\small $t$}
				\put(11,81){\scriptsize ({\bf f})}
			\end{overpic}
		\fi
		\caption[Example 2: Error versus t]{Error $\cE_{\mathrm{ih}}(t,0.5)$ of the approximate solution $\wt{u}_{\mathrm{ih}}^N$  to problem \eqref{eq:FCP_DE}, \eqref{eq:FCP_BC} with parameters: $f(t) = \sin{\pi x} + t \sin{4\pi x}$; $u_0 = u_1 = 0$;  $A$ defined by \eqref {eq:FCP_ex1_hom_A}; $a = 1$; $N_I = 256$. Graphs from the top row of subplots correspond to $\alpha = 0.1, 0.3, 0.5, 0.7, 1$ and {\bf (a)}  $N = 32$; {\bf (b)} $N = 64$ {\bf (c)}; $N = 128$.  Graphs from the bottom row of plots correspond to $\alpha = 1, 1.2, 1.5, 1.7, 1.9$ and {\bf (d)}  $N = 32$; {\bf (e)} $N = 64$; {\bf (f)} $N = 128$.}
		\label{fig:FCP_Ex2_ex_err_vs_t}
	\end{figure}
	In contrast to the homogeneous case, now we detect a notable growth in the experimental error as $t$ progresses, for each combination of $\alpha, N$.
	This effect is less sizable for the sub-parabolic case, depicted in \cref{fig:FCP_Ex2_ex_err_vs_t}~(\textbf{a})-(\textbf{c}),  than for the sub-hyperbolic case from \cref{fig:FCP_Ex2_ex_err_vs_t}~(\textbf{d})-(\textbf{f}).
	Such phenomena can be theoretically explained by the presence of factor $t^\alpha$ in the time-dependent part of the error estimate~\eqref{eq:FCP_inhom_sol_err_est}.
	\FloatBarrier
	In order to analyze the error dependency of $N$ in the similar fashion as in \cref{ex:FCP_ex1_hom_R_eigenfunction}, we additionally
	plot the graphs of $\mathrm{err}_{\mathrm{ih}}(N)$ for a range of $\alpha \in [0.1, 1.9]$ in \cref{fig:FCP_Ex2_ex_err_vs_N}.
	Here, we again see notable differences between the cases of $\alpha$ being less and greater than one.
	Judging by the shape of the curves in \cref{fig:FCP_Ex2_ex_err_vs_N}~(\textbf{a}), the errors of $\wt{u}_{\mathrm{ih}}^N(t,x)$ still decay exponentially with respect to $N$ for $\alpha < 1$, but the convergence slows down faster for smaller $\alpha$ than in \cref{fig:FCP_Ex1_ex_err_vs_N}.
	This can be attributed to the influence of the additional factor $t\alpha^{-1}$ from \eqref{eq:FCP_inhom_sol_err_est} which was not present in estimate \eqref{eq:FCP_hom_sol_err_est} for the solution of the homogeneous problem.
	The exponential dependence of the accuracy on $N$ is also observed in \cref{fig:FCP_Ex2_ex_err_vs_N}~(\textbf{b}) for $\alpha>1$ and $t \in [0, 5]$.
	This time, there is no additional convergence order degradation due to $\alpha$ and the plotted graphs look almost identical to the matching graphs from \cref{fig:FCP_Ex1_ex_err_vs_N}~(\textbf{c}).
	Moreover, the reader can clearly note the impact of the larger times on the numerical stability of the method.
	This is an evidence of the method's limitations to treat only moderate values of $T$.
	\begin{figure}[h!tb]
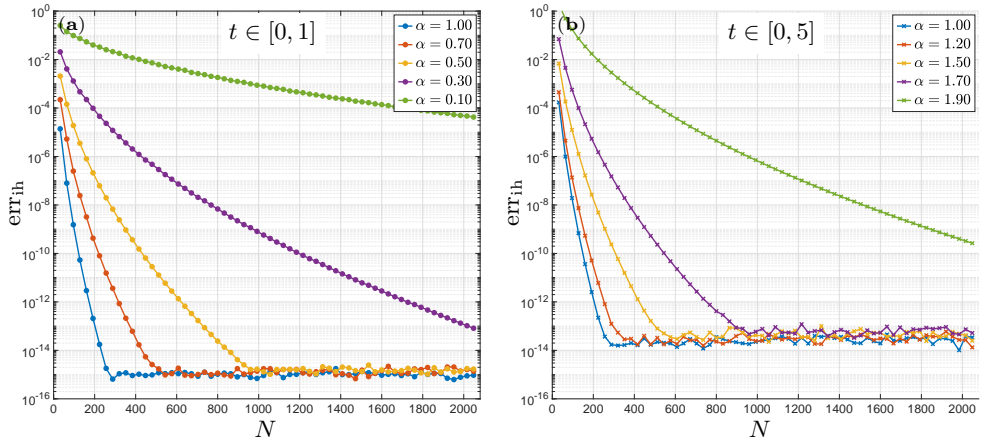

		\newdimen\lentwosubfig
		\lentwosubfig=0.48\linewidth
		\iflatexml
			\includegraphics[width=0.98\lentwosubfig, viewport=80 20 1100 960, clip=true]%
			{Ex2/FCP_ihnom_error_vs_N_alpha_0.1_0.3_0.5_0.7_1_nf0_1_ndf_4_D_1}
			\includegraphics[width=0.98\lentwosubfig, viewport=80 20 1100 960, clip=true]%
			{Ex2/FCP_ihnom_error_vs_N_alpha_1_1.2_1.5_1.7_1.9_nf0_1_ndf_4_D_1}
		\else
			\hspace*{0.1em}
			\begin{overpic}[width=0.98\lentwosubfig, viewport=80 20 1100 960, clip=true]%
				{Ex2/FCP_ihnom_error_vs_N_alpha_0.1_0.3_0.5_0.7_1_nf0_1_ndf_4_D_1}
				\put(42,84){\colorboxo{white}{\small $ t \in [0, 1]$} }
				\put(49,1){\small $N$}
				\put(-2,45){\small \rotatebox{90}{$\mathrm{err}_{\mathrm{ih}}$}}
				\put(8,86){\scriptsize ({\bf a})}
			\end{overpic}
			\hspace*{0.5em}
			\begin{overpic}[width=0.98\lentwosubfig, viewport=80 20 1100 960, clip=true]%
				{Ex2/FCP_ihnom_error_vs_N_alpha_1_1.2_1.5_1.7_1.9_nf0_1_ndf_4_D_1}
				\put(42,84){\colorboxo{white}{\small $ t \in [0, 5]$} }
				\put(48,1){\small $N$}
				\put(-2,45){\small \rotatebox{90}{$\mathrm{err}_{\mathrm{ih}}$}}
				\put(8,86){\scriptsize ({\bf b})}
			\end{overpic}
		\fi
		\caption[Example 2: Error versus N]{%
			Sup-norm error of the approximate solution to problem \eqref{eq:FCP_DE}, \eqref{eq:FCP_BC} with $f(t) = \sin{\pi x} + t \sin{4\pi x}$, $u_0 = u_1 = 0$, and the operator $A$ defined by \eqref {eq:FCP_ex1_hom_A}; $L =1$,  $a = 1$. Errors are plotted for  $N=32, 64, 96, \ldots, 2048$ and %
			(\textbf{a}) $t \in [0,1]$, $\alpha = 0.1, 0.3, 0.5, 0.7, 1$; %
			(\textbf{b}) $t \in [0,5]$, $\alpha = 1, 1.2, 1.5, 1.7, 1.9$. %
		}
		\label{fig:FCP_Ex2_ex_err_vs_N}
	\end{figure}
\end{example}
\FloatBarrier
At this point, we presented enough experimental data to conduct  a meaningful comparisons with existing numerical methods.
We choose review \cite{Garrappa2015a} as a main comparison source because it contains error data for several modern time-stepping numerical methods applied to the linear scalar problem with the same range $\alpha \in [0.1, 1.9]$ as in the \mbox{\cref{fig:FCP_Ex2_ex_err_vs_t,fig:FCP_Ex2_ex_err_vs_N}.}
The error plots in Figure 4 from \cite{Garrappa2015a} are generated using 1024 grid points in time.
Thus, their evaluation is computationally comparable to setting $N=1023$  in the sequential version of \cref{alg:FCP_hom_sol_appr}.
With such $N$, our method gives approximately two times more significant digits then the mentioned second order time-stepping methods, provided that the fractional parameter is not too small ($\alpha > 0.2$).
This result improves drastically when the parallel evaluation is considered, because then the wall-time computational cost of our method is asymptotically equivalent to one step of the intrinsically sequential time-stepping numerical scheme.
As a result, our method is favored for the problems with initial data that satisfy \cref{thm:FCP_prop_appr}, especially when computational resources are plentiful.
On the other hand, sequential time-stepping methods \cite{Garrappa2015a,Baffet2017,Khristenko2021} may be a better choice in situations when the initial data $u_0, u_1$ are not sufficiently smooth \cite{jin2019numerical}, $\alpha$ is close to $0$ or if the computational resources are severely constrained.

In the final example, we consider a fully discretized numerical method for the given fractional Cauchy problem.
It is constructed by applying the solution scheme from \cref{sec:FCP_hom_sol_appr,sec:FCP_inhom_sol_appr}
to the fractional Cauchy problem \eqref{eq:FCP_DE},  \eqref{eq:FCP_BC}, where $A$ is substituted by the bounded linear operator $\wt{A}$ obtained via the finite-difference discretization of \eqref{eq:FCP_ex1_hom_A}.
The initial conditions and right-hand side for the problem are derived using the method of manufactured solutions.
\begin{example}\label{ex:FCP_ex3_inhom_R_FD}
	Let $A$ be defined as in \cref{ex:FCP_ex1_hom_R_eigenfunction}, with $L=1$.
	We postulate that the exact solution to problem \eqref{eq:FCP_DE}, \eqref{eq:FCP_BC} is defined as
	\begin{equation}\label{eq:Ex3_u}
		u(t,x) = x^2 (x-1) \left (x - t^\delta - b \right ), \quad \delta > 1, \ b \in \R.
	\end{equation}
	Then,
	\[
		\begin{aligned}
			A u(t)                 & = 2t^{\delta} (3 x - 1) -12 x^{2}+6x (b +1) - 2 b                                                                                                                                                                                           \\
			\partial_t^\alpha u(t) & =  -\frac{\delta  x^2 \left(x -1\right) }{\Gamma\left(1-\alpha \right)} \intl_{0}^{t}\left(t -s \right)^{-\alpha} s^{\delta -1} d\, s = -\frac{\delta\Gamma (\delta)t^{\delta-\alpha}}{\Gamma (1+\delta-\alpha )}  x^2 \left(x -1\right)  ,
		\end{aligned}
	\]
	so the right-hand side of \eqref{eq:FCP_DE} takes the form
	\begin{equation}\label{eq:Ex3_f}
		f(t)  = 6 t^{\delta} x -2 t^{\delta}  -\frac{ t^{\delta-\alpha} \delta !}{\Gamma (\delta+1-\alpha )}  x^2 \left(x -1\right) -12 x^{2}+6x (b + 1) -2 b.
	\end{equation}
	Such $f(t)$ permits us to study one important practical aspect of the developed solution method.
	Namely, what happens to the accuracy of a fully-discretized solution when $f'(t)$ does not formally belong to the domain of $A$, but the discretization $\wt{A}$ satisfies $\|\wt{A}\wt{f'}(t)\| < \infty$?

	Let $\wt{A}$ be $m \times m$ matrix obtained by a second-order finite-difference discretization of operator \eqref{eq:FCP_ex1_hom_A} on a grid $\Delta_d = \left \{(i-1)/(m-1) \right \}_{i=1}^m$.
	Then, the discretized right-hand side $\wt{f'}(t) \in \left(R^m, \|\ \|_\infty \right)$ is defined by the projection of $f'(t)$ onto $\Delta_d$: $\wt{f'}(t) = \left (f'(t,0), f'(t,x_2), \ldots, f'(t,L) \right )^T$.
	We set $\delta = 2$, $b=-1/2$, and visualize the graph of the derivative
	\[
		f'(t) = 6 \delta t^{\delta-1}x - 2\delta t^{\delta-1} - \frac{ t^{\delta-\alpha-1} (\delta-\alpha) \delta !}{\Gamma (\delta+1-\alpha )}  x^2 \left(x -1\right),
	\]
	for different values of $t, \alpha$, in \cref{fig:FCP_Ex3_ex_sol}a.

	\begin{figure}[h!tb]
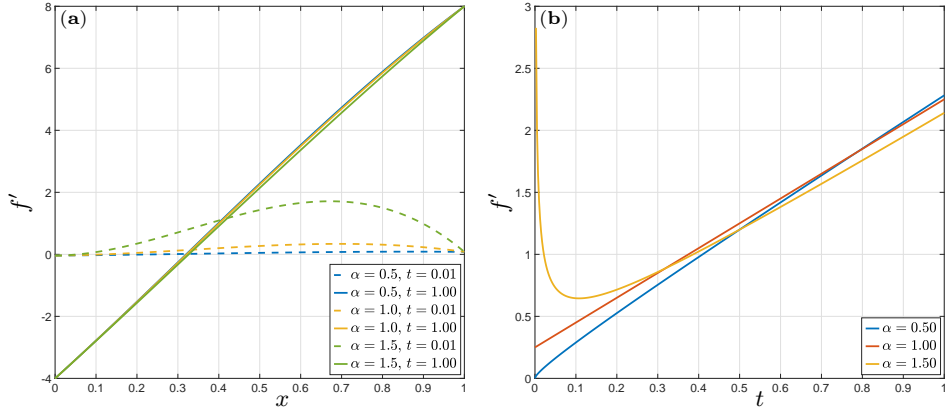

		\newdimen\lentwosubfig
		\lentwosubfig=0.48\linewidth
		\iflatexml
			\includegraphics[width=0.98\lentwosubfig, viewport=80 60 1100 1000, clip=true]%
			{Ex3/FCP_Ex3_x2_df_vs_x_alpha_0.5_1_1.5_d_2_s__0_5_t_0.01__1}
			\includegraphics[width=0.98\lentwosubfig, viewport=80 60 1100 1000, clip=true]%
			{Ex3/FCP_Ex3_x2_df_vs_t_alpha_0.5_1_1.5_d_2_s__0_5_x_0.5}
		\else
			\hspace*{0.4em}
			\begin{overpic}[width=0.98\lentwosubfig, viewport=80 60 1100 1000, clip=true]%
				{Ex3/FCP_Ex3_x2_df_vs_x_alpha_0.5_1_1.5_d_2_s__0_5_t_0.01__1}
				\put(55,-1){\small $x$}
				\put(-3,41){\small \rotatebox{90}{$f'$}}
				\put(8,82){\scriptsize ({\bf a})}
			\end{overpic}\hfil
			\begin{overpic}[width=0.98\lentwosubfig, viewport=80 60 1100 1000, clip=true]%
				{Ex3/FCP_Ex3_x2_df_vs_t_alpha_0.5_1_1.5_d_2_s__0_5_x_0.5}
				\put(55,-1){\small $t$}
				\put(-3,41){\small \rotatebox{90}{$f'$}}
				\put(8,82){\scriptsize ({\bf b})}
			\end{overpic}
		\fi
		\caption[Example 3: Exact solution]{\small The graph of exact solution \eqref{eq:Ex3_u}: (\textbf{a}) plotted as a function of $x \in [0, 1]$ for $t=0.01, 1$, $\alpha = 0.5, 1, 1.5$; (\textbf{b}) plotted as a function of $t \in (0, 1]$ for $x=0.5$, $\alpha = 0.5, 1, 1.5$.} \label{fig:FCP_Ex3_ex_sol}
	\end{figure}
	As we can see, the function $f'(t)$, $t > 0$ does not satisfy the boundary conditions from \eqref{eq:FCP_ex1_hom_A}; hence, $f'(t) \notin D(A)$.
	Furthermore, when $\alpha > 1$, this function posses an integrable singularity at $t=0$; see \cref{fig:FCP_Ex3_ex_sol}~(\textbf{b}).
	This permits us to test \cref{rem:FCP_RLInt_appr_ext}, alluding that even for such $f'(t)$ the approximation $\wt{J}_\alpha^N f'(t)$ from \cref{prop:FCP_RLInt} remains exponentially convergent.
	To this end, we define the approximation errors
	\[
		\cE(t,x) = \left| u(t,x) - \wt{u}_m^N(t,x) \right|,
		\quad \mathrm{err} =  \sup\limits_{t \in [0, T]} 	\left\|  \cE(t)\right\|_\infty ,
	\]
	where
	$\wt{u}_m^N(t,x)$, $x \in \Delta_m$ is the numerical solution to \eqref{eq:FCP_DE}, \eqref{eq:FCP_BC}, with $A = \wt{A}$, $f(t) = \wt{f}(t)$ and $u_0 = \wt{u}(0,x)$, $u_1 = \wt{u'}(0,x)$, calculated by \cref{alg:FCP_hom_sol_appr,alg:FCP_inhom_sol_appr}.
	We set $a = 1$, $\varphi_s = \pi/60$, $\gamma = \chi =1$, as before, and consider the impact of the discretization parameters $N, m$ on the solution's~accuracy.

	In the first batch of experiments, we vary $N$, $\alpha$, while keeping the grid in space fixed with $m=100$.
	The resulting graphs of $\cE(t,0.1)$, $t \in [0,1]$ are displayed in \cref{fig:FCP_Ex3_ex_err_vs_t}.
	\begin{figure}[h!tb]
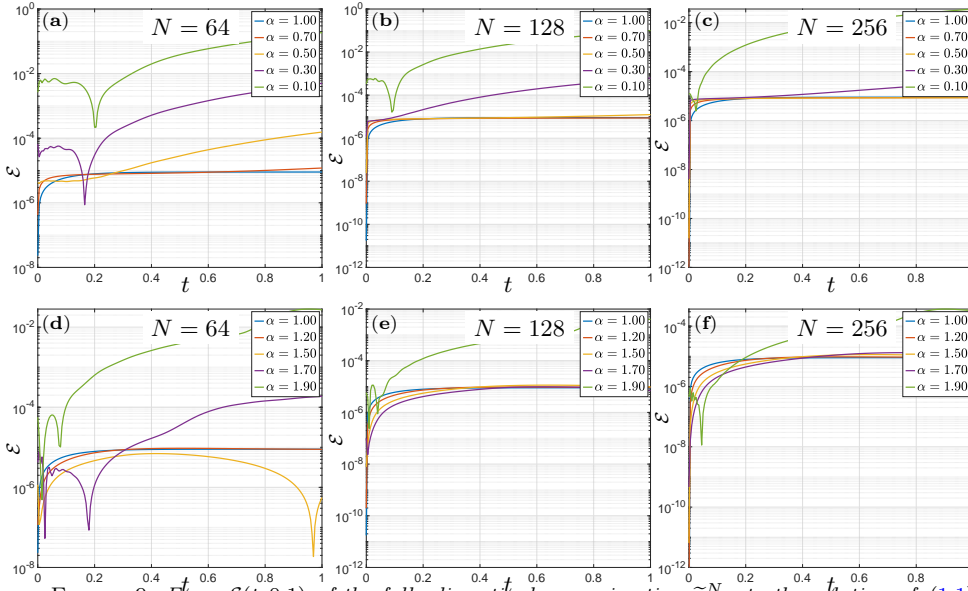

		\newdimen\lenthreesubfig
		\lenthreesubfig=0.325\linewidth
		\iflatexml
			\includegraphics[width=0.98\lenthreesubfig, viewport=00 00 1010 900, clip=true]%
			{Ex3/3x2/FCP_Ex3_x2_err_vs_t_N_64_alpha_0.1_0.3_0.5_0.7_1.0_d_2_s_0_5_t_0_1_Nt_200_Nx_100}
			\includegraphics[width=0.98\lenthreesubfig, viewport=00 00 1010 900, clip=true]%
			{Ex3/3x2/FCP_Ex3_x2_err_vs_t_N_128_alpha_0.1_0.3_0.5_0.7_1.0_d_2_s_0_5_t_0_1_Nt_200_Nx_100}
			\includegraphics[width=0.98\lenthreesubfig, viewport=00 00 1010 900, clip=true]%
			{Ex3/3x2/FCP_Ex3_x2_err_vs_t_N_256_alpha_0.1_0.3_0.5_0.7_1.0_d_2_s_0_5_t_0_1_Nt_200_Nx_100}
		\else
			\hspace*{0.05em}
			\begin{overpic}[width=0.98\lenthreesubfig, viewport=00 00 1010 900, clip=true]%
				{Ex3/3x2/FCP_Ex3_x2_err_vs_t_N_64_alpha_0.1_0.3_0.5_0.7_1.0_d_2_s_0_5_t_0_1_Nt_200_Nx_100}
				\put(41,78){\colorboxo{white}{\small $N = 64$} }
				\put(-2,33){\scriptsize \rotatebox{90}{$\cE$}}
				\put(53,-3){\small $t$}
				\put(9,81){\scriptsize ({\bf a})}
			\end{overpic}
			\hspace*{-0.1em}
			\begin{overpic}[width=0.98\lenthreesubfig, viewport=00 00 1010 900, clip=true]%
				{Ex3/3x2/FCP_Ex3_x2_err_vs_t_N_128_alpha_0.1_0.3_0.5_0.7_1.0_d_2_s_0_5_t_0_1_Nt_200_Nx_100}
				\put(41,78){\colorboxo{white}{\small $N = 128$} }
				\put(-2,37){\scriptsize \rotatebox{90}{$\cE$}}
				\put(53,-3){\small $t$}
				\put(11,81){\scriptsize ({\bf b})}
			\end{overpic}
			\hspace*{-0.1em}
			\begin{overpic}[width=0.98\lenthreesubfig, viewport=00 00 1010 900, clip=true]%
				{Ex3/3x2/FCP_Ex3_x2_err_vs_t_N_256_alpha_0.1_0.3_0.5_0.7_1.0_d_2_s_0_5_t_0_1_Nt_200_Nx_100}
				\put(41,78){\colorboxo{white}{\small $N = 256$} }
				\put(-1,41){\scriptsize \rotatebox{90}{$\cE$}}
				\put(53,-3){\small $t$}
				\put(11,81){\scriptsize ({\bf c})}
			\end{overpic}
		\fi
		\\[6pt]
		\iflatexml
			\includegraphics[width=0.98\lenthreesubfig, viewport=00 00 1010 900, clip=true]%
			{Ex3/3x2/FCP_Ex3_x2_err_vs_t_N_64_alpha_1_1.2_1.5_1.7_1.9_d_2_s_0_5_t_0_1_Nt_200_Nx_100}
			\includegraphics[width=0.98\lenthreesubfig, viewport=00 00 1010 900, clip=true]%
			{Ex3/3x2/FCP_Ex3_x2_err_vs_t_N_128_alpha_1_1.2_1.5_1.7_1.9_d_2_s_0_5_t_0_1_Nt_200_Nx_100}
			\includegraphics[width=0.98\lenthreesubfig, viewport=00 00 1010 900, clip=true]%
			{Ex3/3x2/FCP_Ex3_x2_err_vs_t_N_256_alpha_1_1.2_1.5_1.7_1.9_d_2_s_0_5_t_0_1_Nt_200_Nx_100}
		\else
			\hspace*{0.05em}
			\begin{overpic}[width=0.98\lenthreesubfig, viewport=00 00 1010 900, clip=true]%
				{Ex3/3x2/FCP_Ex3_x2_err_vs_t_N_64_alpha_1_1.2_1.5_1.7_1.9_d_2_s_0_5_t_0_1_Nt_200_Nx_100}
				\put(41,78){\colorboxo{white}{\small $N = 64$} }
				\put(-2,40){\scriptsize \rotatebox{90}{$\cE$}}
				\put(53,-3){\small $t$}
				\put(9,80){\scriptsize ({\bf d})}
			\end{overpic}
			\hspace*{-0.1em}
			\begin{overpic}[width=0.98\lenthreesubfig, viewport=00 00 1010 900, clip=true]%
				{Ex3/3x2/FCP_Ex3_x2_err_vs_t_N_128_alpha_1_1.2_1.5_1.7_1.9_d_2_s_0_5_t_0_1_Nt_200_Nx_100}
				\put(41,78){\colorboxo{white}{\small $N = 128$} }
				\put(-2,43){\scriptsize \rotatebox{90}{$\cE$}}
				\put(53,-3){\small $t$}
				\put(11,80){\scriptsize ({\bf e})}
			\end{overpic}
			\hspace*{-0.1em}
			\begin{overpic}[width=0.98\lenthreesubfig, viewport=00 00 1010 900, clip=true]%
				{Ex3/3x2/FCP_Ex3_x2_err_vs_t_N_256_alpha_1_1.2_1.5_1.7_1.9_d_2_s_0_5_t_0_1_Nt_200_Nx_100}
				\put(41,78){\colorboxo{white}{\small $N = 256$} }
				\put(-1,50){\scriptsize \rotatebox{90}{$\cE$}}
				\put(53,-3){\small $t$}
				\put(11,80){\scriptsize ({\bf f})}
			\end{overpic}
		\fi
		\caption[Example 3: Error versus t]{Error $\cE(t,0.1)$ of the fully discretized approximation $\wt{u}_{100}^N$  to the solution of \eqref{eq:FCP_DE}, \eqref{eq:FCP_BC} with $A$, $f(t)$ defined by \eqref{eq:FCP_ex1_hom_A} and \eqref{eq:Ex3_f}, correspondingly; $L = 1$, $a = 1$, $\varphi_s = \pi/60$, $\gamma = \chi =1$. Graphs from the top row of subplots correspond to $\alpha = 0.1, 0.3, 0.5, 0.7, 1$ and {\bf (a)}  $N = 64$; {\bf (b)} $N = 128$ {\bf (c)}; $N = 256$.  Graphs from the bottom row of plots correspond to $\alpha = 1, 1.2, 1.5, 1.7, 1.9$ and {\bf (d)}  $N = 64$; {\bf (e)} $N = 128$; {\bf (f)} $N = 256$.}
		\label{fig:FCP_Ex3_ex_err_vs_t}
	\end{figure}
	The behavior of $\cE(t,0.1)$ in these graphs follows the pattern predicted by \cref{thm:FCP_prop_appr,thm:FCP_inhom_sol_err_est}, albeit this time the error saturation occurs at about $10^{-5} \leq m^{-2}$, when the effect of the second-order accuracy in space becomes dominant.
	Aside of that, we observe no accuracy degradation as compared to \cref{ex:FCP_ex1_hom_R_eigenfunction,ex:FCP_ex2_inhom_R_eigenfunction}, where the space-dependent component of the solution was evaluated explicitly.
	For $\alpha>1$, this demonstrates the aforementioned robustness of $\wt{J}_\alpha^N$ with respect to the integrable singularity of $f'(t)$ at $t = 0$.

	It is important to highlight that the a priori error estimates from \cref{sec:FCP_num_method} do not enforce any dependencies between the discretization parameters $N, m$ of the approximation $\wt{u}_m^N(t,x)$ or the chosen grids in time and space.
	To practically verify this proposition, we consider the sup-norm error $\mathrm{err}$ of the approximated solution.
	In the second batch of experiments, this error is evaluated for the increasing sequence of $m = 10^1, 10^2, 10^3, 10^4$, $N = 32,64, 96, \ldots 512$ and different $\alpha$.
	The resulting graphs, depicted in \cref{fig:FCP_Ex3_ex_err_vs_N}, reaffirm the pointwise error behavior observed  in \cref{fig:FCP_Ex3_ex_err_vs_t}.

	The sup-norm error is decaying exponentially with respect to $N$ until it plateaus near the certain value, which is roughly constant within each subfigure.
	For larger $m$, the mentioned plateauing occurs at a smaller value, consistently following the second-order decay rate with respect to the grid step-size in space.
	This is true all across the range of $m/N$ covered in \cref{fig:FCP_Ex3_ex_err_vs_N}~(\textbf{a})-(\textbf{d}).
	\begin{figure}[h!tb]
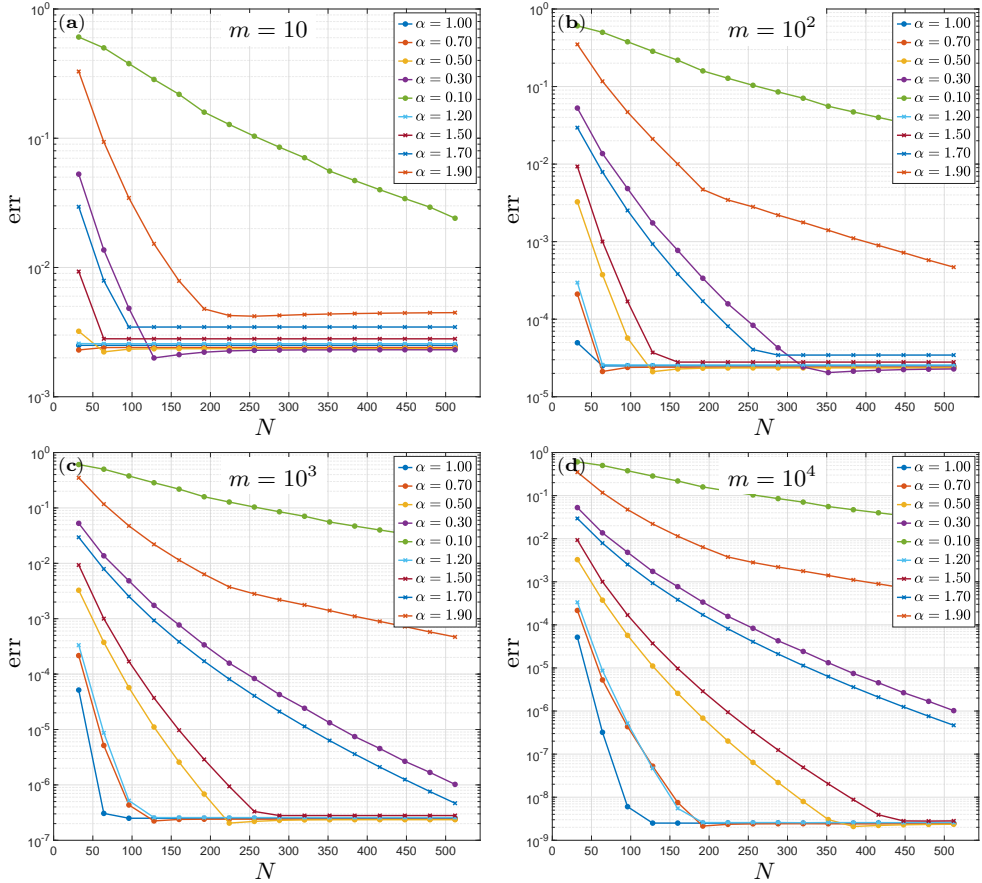

		\newdimen\lentwosubfig
		\lentwosubfig=0.48\linewidth
		\iflatexml
			\includegraphics[width=0.98\lentwosubfig, viewport=80 20 1100 960, clip=true]%
			{Ex3/FCP_Ex3_x2_err_vs_N_alpha_0.1_0.3_0.5_0.7_1.0_1.2_1.5_1.7_1.9_d_2_s_0_5_t_0_1_Nt_200_Nx_10}
			\includegraphics[width=0.98\lentwosubfig, viewport=80 20 1100 960, clip=true]%
			{Ex3/FCP_Ex3_x2_err_vs_N_alpha_0.1_0.3_0.5_0.7_1.0_1.2_1.5_1.7_1.9_d_2_s_0_5_t_0_1_Nt_200_Nx_100}
		\else
			\hspace*{0.1em}
			\begin{overpic}[width=0.98\lentwosubfig, viewport=80 20 1100 960, clip=true]%
				{Ex3/FCP_Ex3_x2_err_vs_N_alpha_0.1_0.3_0.5_0.7_1.0_1.2_1.5_1.7_1.9_d_2_s_0_5_t_0_1_Nt_200_Nx_10}
				\put(42,84){\colorboxo{white}{\small $m = 10$} }
				\put(49,1){\small $N$}
				\put(-2,45){\small \rotatebox{90}{$\mathrm{err}$}}
				\put(8,86){\scriptsize ({\bf a})}
			\end{overpic}
			\hspace*{0.5em}
			\begin{overpic}[width=0.98\lentwosubfig, viewport=80 20 1100 960, clip=true]%
				{Ex3/FCP_Ex3_x2_err_vs_N_alpha_0.1_0.3_0.5_0.7_1.0_1.2_1.5_1.7_1.9_d_2_s_0_5_t_0_1_Nt_200_Nx_100}
				\put(42,84){\colorboxo{white}{\small $m = 10^2$} }
				\put(49,1){\small $N$}
				\put(-2,45){\small \rotatebox{90}{$\mathrm{err}$}}
				\put(8,86){\scriptsize ({\bf b})}
			\end{overpic}
		\fi
		\\
		\iflatexml
			\includegraphics[width=0.98\lentwosubfig, viewport=80 20 1100 960, clip=true]%
			{Ex3/FCP_Ex3_x2_err_vs_N_alpha_0.1_0.3_0.5_0.7_1.0_1.2_1.5_1.7_1.9_d_2_s_0_5_t_0_1_Nt_200_Nx_1000}
			\includegraphics[width=0.98\lentwosubfig, viewport=80 20 1100 960, clip=true]%
			{Ex3/FCP_Ex3_x2_err_vs_N_alpha_0.1_0.3_0.5_0.7_1.0_1.2_1.5_1.7_1.9_d_2_s_0_5_t_0_1_Nt_200_Nx_10000}
		\else
			\hspace*{0.1em}
			\begin{overpic}[width=0.98\lentwosubfig, viewport=80 20 1100 960, clip=true]%
				{Ex3/FCP_Ex3_x2_err_vs_N_alpha_0.1_0.3_0.5_0.7_1.0_1.2_1.5_1.7_1.9_d_2_s_0_5_t_0_1_Nt_200_Nx_1000}
				\put(42,83){\colorboxo{white}{\small $m = 10^3$} }
				\put(49,1){\small $N$}
				\put(-2,45){\small \rotatebox{90}{$\mathrm{err}$}}
				\put(8,86){\scriptsize ({\bf c})}
			\end{overpic}
			\hspace*{0.5em}
			\begin{overpic}[width=0.98\lentwosubfig, viewport=80 20 1100 960, clip=true]%
				{Ex3/FCP_Ex3_x2_err_vs_N_alpha_0.1_0.3_0.5_0.7_1.0_1.2_1.5_1.7_1.9_d_2_s_0_5_t_0_1_Nt_200_Nx_10000}
				\put(42,83){\colorboxo{white}{\small $m = 10^4$} }
				\put(49,1){\small $N$}
				\put(-2,45){\small \rotatebox{90}{$\mathrm{err}$}}
				\put(8,86){\scriptsize ({\bf d})}
			\end{overpic}
		\fi
		\caption[Example 3: Error versus N for different space-grid sizes]{%
			Sup-norm error of the fully discretized approximation $\wt{u}_m^N$ to the solution of \eqref{eq:FCP_DE}, \eqref{eq:FCP_BC} with $A$, $f(t)$ defined by \eqref{eq:FCP_ex1_hom_A} and \eqref{eq:Ex3_f}, correspondingly; $L = 1$, $a = 1$, $\varphi_s = \pi/60$, $\gamma = \chi =1$, $T=1$.
			Errors are plotted for  $N=32, 64, 96, \ldots, 512$ and %
			(\textbf{a}) $m = 10$;
			(\textbf{b}) $m = 10^2$;
			(\textbf{c}) $m = 10^3$;
			(\textbf{d}) $m = 10^4$.
		}
		\label{fig:FCP_Ex3_ex_err_vs_N}
	\end{figure}
	\vspace*{-10pt}
\end{example}

The last example practically demonstrates that the only natural requirement imposed by our method on the spatial discretization scheme is for the discretized operator $\wt{A}$ to remain sectorial.
This suggests the possibility for the extension of the developed method to nonlinear problems, along the course discussed in the introduction.
In addition, the generalization of the method to Cauchy problems with the nonlocal-in-time condition~\cite{Gavrilyuk2010,nonloc_exMVS2014} also seems to be a promising direction of research, given its importance for the applications to final-value problems \cite{Wei2014,Jin2015}.

\section{Conclusions}
In this work, we proposed and justified the new exponentially convergent parallel numerical method for the fractional Cauchy problem \eqref{eq:FCP_DE}, \eqref{eq:FCP_BC}.
The constructed method is based on the approximation of mild solution representation \eqref{eq:FCP_InhomSol_rep} using the combination of efficient methods for the contour evaluation of the propagators $S_{\alpha,\beta}(t)$, $\beta = 1,2$ and tailored quadrature rules for the discretization of the Riemann--Liouiville and convolution integral operators from \eqref{eq:FCP_InhomSol_rep}.
As a result, the numerical evaluation of \eqref{eq:FCP_InhomSol_rep} is reduced to the solution of a sequence of independent linear stationary problems.
The accuracy estimates established by \cref{thm:FCP_prop_appr,thm:FCP_inhom_sol_err_est} remain valid uniformly in time for the entire range $\alpha \in (0,2)$, under the moderate smoothness assumptions $u_0 \in D(A^\gamma)$, $f'(z) \in D(A^{\chi}) $, with some $\gamma,\chi > 0$ and all $z \in D_d^2$, defined by \eqref{eq:FCP_D_d_2}.
These results recover the previously existing error estimates for parabolic problems \cite{gmv,gm5}, when  $\alpha$ is set to $1$ and $T< \infty$.
All the theoretical results are verified experimentally.
This includes the results from \cref{thm:FCP_hom_sol_err_est} and \cref{thm:FCP_inhom_sol_err_est} regarding the approximation of homogeneous and inhomogeneous parts of the solution, which are experimentally considered in \cref{ex:FCP_ex1_hom_R_eigenfunction,ex:FCP_ex2_inhom_R_eigenfunction}.
Here, we put extra effort to demonstrate that the constructed solution approximation is numerically stable for $\alpha \in [0.1,1.9]$ and practically capable of handling operators $A$ with a broad range of spectral characteristics.
It encompasses the class of so-called singularly perturbed operators, that are modeled in \cref{ex:FCP_ex1_hom_R_eigenfunction} by the Laplacian with a very small distance between the $\mathrm{Sp}(A)$ and the origin (see \cref{fig:FCP_Ex1_ex_err_vs_N}).
Additionally, in \cref{ex:FCP_ex3_inhom_R_FD}, we considered a fully discretized solution scheme for \eqref{eq:FCP_DE}, \eqref{eq:FCP_BC} to practically verify the robustness of the constructed approximation toward errors caused by the discretization in space.
Naturally, the mentioned benefits of the developed method come with some limitations.
Among such, we mention the required analyticity of $f(z)$ in the complex neighborhood $D_d^2$ of time interval $(0, T)$.
On the one hand, this is a considerably stronger assumption on the problem's right-hand side than the assumption $f \in W^{1,1}([0, T], X)$, imposed by the solution existence result from \cref{thm:FCP_sol_rep}.
On the other hand,  such analyticity assumptions are typical for the theory of exponentially convergent quadrature \cite{Davis1984,Stenger1993}.
Moreover, the quadrature rule chosen in \cref{thm:FCP_inhom_sol_err_est} permits for a practically realistic situation when $f'(t)$ has an integrable singularity at $t=0$.
The ability of the method to handle such class of $f$ was experimentally demonstrated in \cref{ex:FCP_ex3_inhom_R_FD}.
Another possible limitation of the current method is its practical viability only for moderate $T \leq 20$.
Nonetheless, existing numerical evidence suggests that the long-term stability of the method could be improved by some nonessential modifications.
Larger values of $T\approx 200 $ are necessary for certain parameter identification problems \cite{Zhokh2019}, which, along with the mentioned nonlinear and nonlocal extensions of the given problem, are going to be considered in the future works.

\iflatexml
{
	\footnotesize
	\ifx \undefined \Dbar \def \Dbar {\leavevmode\raise0.2ex\hbox{--}\kern-0.5emD}
  \fi\ifx \undefined \hckudot \def \hckudot#1{\ifmmode \setbox7
  \hbox{\accent20#1}\else \setbox7 \hbox{\accent20#1}\penalty 10000 \relax \fi
  \raise 1\ht7 \hbox{\raise.2ex \hbox to 1\wd7{\hss.\hss}}\penalty 10000
  \hskip-1\wd7 \penalty 10000\box7} \fi

}
\else
{
	\footnotesize
	\bibliographystyle{siamplain}
	\bibliography{references}
}
\fi
\clearpage
\appendix
\section{Corrigendum}
This section contains a list of the post-publication corrections that are necessary to make the publication:
\begin{description}
	\item[{[*]}] Sytnyk, D. and Wohlmuth, B. 2023. Exponentially Convergent Numerical Method for Abstract Cauchy Problem with Fractional Derivative of Caputo Type. Mathematics. 11, 10 (2023). DOI:https://doi.org/10.3390/math11102312
\end{description}
up-to-date with the current manuscript.
\begin{enumerate}
	\item In the formulation of Theorem 1 on p. 2 and in the text of second paragraph on p. 4 [*]: formula \textcolor{red}{$\varphi_s < \tfrac{\pi}{2}\min\left\{1,\alpha^{-1}\right \}$} should be read as \textcolor{blue}{$\varphi_s < \pi\min{\left\{\frac{1}{2}, \left(1 - \tfrac{\alpha}{2}\right)\right\}}$}.
	      The formula \textcolor{red}{$-\Sigma(\rho_s, \varphi_s)$} in the last sentence of the theorem's formulation should be read as \textcolor{blue}{$-\Sigma(\rho_s, \varphi_s) \cup \{0\}$}.

	\item Caption of Figure 1 at p.6 [*]: formula \textcolor{red}{$\rho_s = -\pi$} should be read as \textcolor{blue}{$\rho_s = \pi$}

	\item Last formula at p.7 [*] should be read as \textcolor{blue}{$\tan{(\phi_s - 2d)}$}.

	\item The text from the beginning of p. 8 [*] down to, and including, formula (14) should be read as:
	      \textcolor{blue}{which, after back-substitution, implies $\tan{(\phi_s - 2d)} = \tan{\phi_c}$.
		      Due to the constraints on $d$, $\phi_c$, $\phi_s$ we are interested only in the fol\-lowing solution
		      of the last equation:
		      \[
			      \hspace*{10em}d = \frac{1}{2}\left(\phi_s - \phi_c \right). \hspace*{14em} (14)
		      \]
	      }

	\item Lemma 1 on p. 9 [*]: resolvent part of formulas (18), (19) for $F_{\alpha,1} (\xi)$ and $F_{\alpha,2} (\xi)$ \textcolor{red}{$(z^{\alpha} I + A)^{-1}$} should be read as \textcolor{blue}{$(z^{\alpha}(\xi) I + A)^{-1}$}

	\item Paragraph around (22) on p. 11  [*]:\par
	      \textcolor{red}{
		      where
		      $
			      D_d(\epsilon)=\{z \in \mathbb{C}:\; | \textrm{Re}(z)| < 1/\epsilon, \
			      |\textrm{Im}(z)|<d(1-\epsilon)\}
		      $
		      and $\partial D_d(\epsilon)$ is the boundary of $ D_d(\epsilon)$.
		      The truncation errors of (21) satisfy the estimate [31,50]:
		      \begin{equation*}
			      \left \|S_{\alpha,\beta}(t)  - \wt{S}_{\alpha,\beta}^\infty(t) \right \|	\leq  \frac{e^{-\pi d/h}}{2 \sinh (\pi d/h)}\|{\cF_{\alpha,\beta}}(t,\cdot)\|_{{\bf H}^1(D_d)} .\hspace*{6em} (22)
		      \end{equation*}
		      Thus, in order to bound the truncation error, one needs to obtain estimates for the ${{\bf H}^1(D_d)}$ norms of the functions $\cF_{\alpha,\beta}(t,z)$, $\beta =1,2$.}\par
	      should be read as: \par
	      \textcolor{blue}{
		      where
		      $
			      D_d(\epsilon)=\{z \in \mathbb{C}:\; | \Re(z)| < 1/\epsilon, \
			      |\Im(z)|<d(1-\epsilon)\}
		      $
		      and $\partial D_d(\epsilon)$ is the boundary of $ D_d(\epsilon)$.
		      The discretization error of (21) satisfy the estimate [31,50]:
		      \begin{equation*}
			      \left \|S_{\alpha,\beta}(t) x_\beta - \wt{S}_{\alpha,\beta}^\infty(t) x_\beta \right \|	\leq  \frac{e^{-\pi d/h}}{2 \sinh (\pi d/h)}\|{\cF_{\alpha,\beta}}(t,\cdot)\|_{{\bf H}^1(D_d)} . \hspace*{4em} (22)
		      \end{equation*}
		      Thus, in order to bound this term, one needs to obtain estimates for the ${{\bf H}^1(D_d)}$ norms of the functions $\cF_{\alpha,\beta}(t,z)$, $\beta =1,2$.}

	\item Theorem 2 on p. 13 [*]:
	      The formula  \textcolor{red}{$\alpha \varphi_s < \tfrac{\pi}{2}$, $\gamma \in (0,1)$}  of second sentence should be read as
	      \textcolor{blue}{$\varphi_s < \pi\left(1 - \tfrac{\alpha}{2}\right)$,} \par
	      The sentence after eq. (29) should be read as:
	      \textcolor{blue}{Here, $d = \frac{\phi_\alpha - \phi_c}{2}$, $\phi_\alpha = \min\{\pi, \frac{\pi -\varphi_s}{\alpha}\}$   and $\phi_c \in \left[\tfrac{\pi}{2}, \phi_\alpha\right)$, $a_0>0$  are given.}

	\item The second paragraph on p. 16: formula \textcolor{red}{$z \in {\mathbb C} \setminus \mathrm{Sp}(A)$} should be read as \textcolor{blue}{$z \in {\mathbb C} \setminus \mathrm{Sp}(-A)$}.

	\item The end of the first paragraph on p. 17: formula \textcolor{red}{$d = \frac{\phi_\alpha + \varphi_c - \pi}{2}$} should be read as \textcolor{blue}{(14)}.

	\item Theorem 3 on p. 22 [*]: the part of the sentence after eq. (45) that reads as:
	      \textcolor{red}{such that $\alpha \varphi_s < \tfrac{\pi}{2}$, }
	      should be omitted. The sentence after eq. (48) should be read as:
	      \textcolor{blue}{Here, $d = \frac{\phi_\alpha - \phi_c}{2}$, $\phi_\alpha = \min\left\{\pi, \frac{\pi -\varphi_s}{\alpha}\right\}$  and $\phi_c \in \left[ \tfrac{\pi}{2}, \phi_\alpha \right)$, $a_0>0$  are given.}
	\item The last sentence of the first paragraph on p. 28 [*]: The word \textcolor{red}{type-dependent} should be read as \textcolor{blue}{time-dependent}.
	\item The bibliography reference 10 on p. 33 [*] should be read as:
	      \textcolor{blue}{Sytnyk, D. and Wohlmuth, B. 2023. Abstract Fractional Cauchy Problem: Existence of Propagators and Inhomogeneous Solution Representation. Fractal and Fractional. 7, 10 (Sep. 2023), 698. DOI:https://doi.org/10.3390/fractalfract7100698}
\end{enumerate}

\end{document}